\renewcommand{\norm}[1]{\left\lVert#1\right\rVert}
\renewcommand{\abs}[1]{\left\lvert#1\right\rvert}
\pgfplotsset{
  width=.65\linewidth,
  axis background/.style={fill=black!5!white},
  grid style={densely dotted,semithick},
  legend style={
    legend columns=1,
    legend pos=outer north east
  },
  compat=newest 
}
\numberwithin{equation}{section}
\newcommand{\Div}{\divergence}
\newcommand{\R}{\mathbb R}
\newcommand{\dd}{\,\mathrm{d}}
\newcommand{\ds}{\dd s}
\newcommand{\dt}{\dd t}
\newcommand{\dx}{\dd x}
\providecommand{\seminormtmp}[2]{{#1[{#2}#1]}}
\providecommand{\seminorm}[1]{\seminormtmp{}{#1}}
\begin{document}

\title[Averaged discretization]{An averaged space-time discretization of the stochastic $p$-Laplace system}

\author{ Lars Diening, Martina Hofmanov\'{a} and J\"{o}rn Wichmann}%
\address[L. Diening, M. Hofmanov\'{a},
J. Wichmann]{Department of Mathematics, University of Bielefeld,
  Postfach 10 01 31, 33501 Bielefeld, Germany}%
\email{lars.diening@uni-bielefeld.de}%
\email{hofmanova@math.uni-bielefeld.de}%
\email{jwichmann@math.uni-bielefeld.de}%
\thanks{ The research was funded by the Deutsche Forschungsgemeinschaft (DFG, German Research Foundation) – SFB 1283/2 2021 – 317210226.}

\begin{abstract}
  We study the stochastic $p$-Laplace system in a bounded domain. We propose two new space-time discretizations based on the approximation of time-averaged values. We establish linear convergence in space and $1/2$ convergence in time. Additionally, we provide a sampling algorithm to construct the necessary random input in an efficient way. The theoretical error analysis is complemented by numerical experiments.
\end{abstract}

\subjclass[2010]{%
  65N15, 
  65N30, 
  35K55, 
  35K65, 
  35K67, 
  60H15, 
}
\keywords{Stochastic PDEs, Nonlinear Laplace-type systems, Finite
  element methods, Space-time discretization, stochastic $p$-heat equation}

\maketitle

\tableofcontents

\section{Introduction}

Let $\mathcal{O} \subset \R^n$ be a polygonal domain, $n \geq 1$, $N \geq 1$, $T > 0$ be finite. We are interested in the approximation of the solution process $u :\Omega \times [0,T] \times \mathcal{O} \to \R^N$ to the stochastic $p$-Laplace system. Given an initial datum $u_0$ and a stochastic forcing term $G(u) \dd W$ (for the precise assumptions see Assumption \ref{ass:Noise}), $u$ is determined by the relations 
\begin{alignat}{2} \label{intro:p-Laplace-stoch}
\begin{aligned}
\dd u - \Div S(\nabla u) \dt  &= G(u) \dd W \quad &&\text{ in } \Omega \times (0,T) \times \mathcal{O}, \\
u &= 0 \quad &&\text{ on } \Omega \times (0,T) \times \partial \mathcal{O}, \\
u(0) &= u_0 &&\text{ on } \Omega \times \mathcal{O},
\end{aligned}
\end{alignat}
where $S(\xi):= \left( \kappa + \abs{\xi} \right)^{p-2} \xi \in \R^{n \times N}$, $p \in (1,\infty)$ and $\kappa \geq 0$. Closely related to $S$ is the nonlinear operator $V(\xi):= \left( \kappa + \abs{\xi} \right)^\frac{p-2}{2} \xi \in \R^{n \times N}$.

\subsection{Model}
The system~\eqref{intro:p-Laplace-stoch} has many important applications in nature. As one major example, it provides a prototype system towards the modeling of non-Newtonian fluids. More specifically, it is closely related to power law fluids \cite{Breit2015, MR2884052}. The case~$p=2$ corresponds to the famous stochastic heat equation and has been studied extensively, both analytically e.g. \cite{MR1316109, MR3779690, MR3354615} as well as numerically e.g. \cite{MR2476562, MR2507605, MR3454367,MR3543162,MR4268037}. 

The $p$-Laplace system arises as a stochastically perturbed gradient flow of the energy $J : W^{1,p}_0(\mathcal{O}) \to [0,\infty)$ defined by
\begin{align} \label{intro:p-Energy}
\mathcal{J}(u) := \int_\mathcal{O} \varphi(\abs{\nabla u}) \dx ,
\end{align}
where $\varphi(t):= \int_0^t (\kappa+s)^{p-2}s \ds$. In the case $\kappa = 0$ the energy \eqref{intro:p-Energy} corresponds to the classical $p$-Dirichlet energy. 

\subsection{Existence \& well-posedness}
The existence of analytically weak solutions in the space $ L^2\left(\Omega; C\left([0,T]; L^2\left(\mathcal{O} \right)\right)\right) \cap L^p\big(\Omega;L^p\big(0,T;W^{1,p}_0\left(\mathcal{O}\right)\big)\big)$
can be established by standard monotonicity arguments \cite{LiRo}. It requires a linear growth assumption on the noise coefficient $G$ and $L^2(\mathcal{O})$-integrable initial data. 

First results on the existence of strong solutions to stochastically perturbed gradient flows have been obtained by Gess \cite{Gess2012Strong}. It includes the degenerate, $p \geq 2$, $p$-Laplace equation. 

In the literature different generalization of~\eqref{intro:p-Laplace-stoch} have been considered. Well-posedness for merely $L^1$-initial data has been addressed in \cite{MR4225916}. More general systems, where $p$ is allowed to depend on $(\omega,t,x)$ respectively on $(t,x)$, are considered in \cite{MR3585706}, \cite{MR3535765} respectively in \cite{MR3327516}. The singular case $p \in [1,2)$ has been analyzed in \cite{MR2561270}, \cite{MR3580815} and \cite{MR4261330}.

\subsection{Regularity}
Regularity properties of a function $u$ become particularly important when it comes to the approximability of $u$ within a discrete function class. Prominently, discrete tensor spaces generated via a time stepping scheme and a finite element discretization in space can approximate smooth functions more easily compared to non-smooth functions.

Historically, many authors addressed H\"older and $C^{1,\alpha}$-regularity of solutions to the deterministic stationary $p$-Laplace equation, e.g. \cite{MR0244628,MR474389,MR709038,MR727034,MR969499,MR2062943,MR2086757}. 
In general $\alpha \in (0,1)$ is an unknown quantity. While $C^{1,\alpha}$-regularity can be used to measure the approximation quality of finite elements, they usually fail to produce optimal results, since in general $\alpha < 1$. 

Sobolev regularity provides an alternative scale of smoothness. In the singular case, $p \in (1,2)$, $W^{2,2}$-regularity has been proven by Liu and Barrett \cite{LiuBar93rem}. The degenerate setting is more delicate and one can not expect $\nabla^2 u$ to be well-defined on $\set{\nabla u = 0}$. In fact, due to the nonlinear structure of the equation, solutions have limited regularity even for smooth data. A sharp result in the $2$-dimensional setting about limited regularity for the H\"older as well as Sobolev scale has been obtained by Iwaniec and Manfredi \cite{IwMa}.

The nonlinear character of the equation naturally introduces the additional quantities $S(\nabla u)$ and $V(\nabla u)$. In the scalar case both expressions are robust with respect to $p \in (1,\infty)$. Here it is possible to prove $V(\nabla u) \in W^{1,2}$ via a difference quotient technique \cite{MR1055157, MR1237129} and $S(\nabla u) \in W^{1,2}$ by a functional inequality \cite{MR3803772}. The result $V(\nabla u) \in W^{1,2}$ generalizes to the vectorial case. However, the functional inequality fails in the vectorial case at least point-wise for $p \leq 2(2-\sqrt{2})\approx 1.1715$. Therefore, it is unclear whether a regularity result $S(\nabla u) \in W^{1,2}$ is achievable for small $p$. Nevertheless, for $p > 2(2-\sqrt{2})$ it is shown in \cite{Balci2021} that $S(\nabla u) \in W^{1,2}$. Regularity for $S(\nabla u)$ on the Besov and Triebel-Lizorkin scale in the plane for $p \geq 2$ has been obtained in \cite{MR4021898}.

Regularity results for the parabolic $p$-Laplace system were derived in \cite{DER}. The authors showed, by formally testing the equation with $-\Delta u$ respectively $-\partial_t^2 u$, that
\begin{subequations}
\label{intro:instat}
\begin{align}
V(\nabla u) &\in L^2\left(0,T;W^{1,2}\left(\mathcal{O}\right)\right) \cap W^{1,2}\left(0,T;L^2\left(\mathcal{O}\right)\right), \label{intro:V-instat} \\
u &\in  L^\infty\left(0,T;W^{1,2}\left(\mathcal{O}\right)\right) \cap W^{1,\infty}\left(0,T;L^2\left(\mathcal{O}\right)\right). \label{intro:u-instat}
\end{align}
\end{subequations}
Additionally, $S(\nabla u) \in L^2(0,T;W^{1,2}(\mathcal{O}))$ was proven in \cite{CiaMaz20} for either $p\in(1,\infty)$ and $N=1$ or $p \geq 2$ and $N > 1$.

Within the context of the stochastic $p$-Laplace system it is possible to prove similar spatial regularity as in~\eqref{intro:instat}. The formal testing needs to be replaced by a suitable application of It\^o's formula as done by Breit \cite{Breit2015Regularity}. However, the time regularity is limited due to the presence of the stochastic forcing. In the degenerate case partial time regularity can be recovered by exploiting the strong formulation of the $p$-Laplace system as done by Wichmann \cite{wichmann2021temporal}. Overall, for appropriate data assumptions it is possible to verify (see Section~\ref{sec:Regularity})
\begin{subequations}
\label{intro:est-stoch}
\begin{align}
V(\nabla u) &\in L^2\left(\Omega;L^2\left(0,T;W^{1,2}\left(\mathcal{O}\right)\right) \cap B^{1/2}_{2,\infty}\left(0,T;L^2\left(\mathcal{O}\right)\right)\right),\label{intro:Vest-stoch}\\
u &\in L^2\left(\Omega; L^\infty\left(0,T;W^{1,2}\left(\mathcal{O}\right)\right) \cap B^{1/2}_{\Phi_2,\infty}\left(0,T;L^2\left(\mathcal{O}\right)\right)\right). \label{intro:uest-stoch}
\end{align}
\end{subequations}
Here $\Phi_2(s) := e^{s^2}-1$ and $B$ denotes a Besov-Orlicz space (see Section~\ref{sec:Function spaces}).

\subsection{Approximation}
In the past many authors have studied the numerical approximation of the deterministic counterpart of \eqref{intro:p-Laplace-stoch}, e.g.  \cite{Wei1992,BaLi1,BaLi2,EbLi,DER,DieKre08,BelDieKre2012,
BarDieNoc20,BreMen18pre,MR4286257,MR4367655}. 

The error of the discrete and analytic solution has been expressed in various different quantities. It turned out that the natural quantity to measure the error is given by 
\begin{align}\label{intro:error-det}
\max_{0\leq m \leq M} \norm{u(t_m) - u_{m,h}}_{L^2(\mathcal{O})}^2 + \tau \sum_{m=1}^M \norm{V(\nabla u(t_m)) - V(\nabla u_{m,h})}_{L^2(\mathcal{O})}^2,
\end{align}
where $V(\xi) := \left( \kappa+ \abs{\xi} \right)^\frac{p-2}{2}\xi$ and $u_{m,h}$ is an approximation of $u(t_m)$. In \cite{DieKre08} it has been proved that the expression $\norm{V(\nabla u) - V(\nabla u_h)}_{L^2(\mathcal{O})}^2$ is equivalent to the energy error $\mathcal{J}(u_h) - \mathcal{J}(u)$. 

In the stationary setting, starting with the seminal work by Barrett and Liu \cite{BaLi1} and further improvements in \cite{EbLi} and \cite{DR}, it has been proved that 
\begin{align*}
\norm{V(\nabla u) - V(\nabla u_h)}_{L^2(\mathcal{O})} \lesssim h \norm{\nabla V(\nabla u)}_{L^2(\mathcal{O})}.
\end{align*}
This settles the question about optimal convergence for piece-wise linear continuous elements. In fact, the paper \cite{DER} deals with the parabolic system and optimal convergence under the regularity assumption~\eqref{intro:instat} has been achieved, i.e., 
\begin{align} \label{intro:est-instat}
\begin{aligned}
\max_{0\leq m \leq M} \norm{u(t_m) - u_{m,h}}_{L^2(\mathcal{O})}^2 &+ \tau \sum_{m=1}^M \norm{V(\nabla u(t_m)) - V(\nabla u_{m,h})}_{L^2(\mathcal{O})}^2 \\
&\lesssim h^2 + \tau^2.
\end{aligned}
\end{align}

The error quantity \eqref{intro:est-instat} relies on the fact that the mapping $t \mapsto \norm{V(\nabla u(t))}_{L^2(\mathcal{O})}$ is continuous. However, if the data is not sufficiently smooth, point-values might not be well-defined. In general, dealing with irregular data and therefore irregular solutions is a delicate task. Different methods have been suggested to recover well-defined error quantities.

In \cite{MR4286257} two of us develop a numerical scheme based on time averages to circumvent the usage of point evaluations. A more probabilistic approach has been used in \cite{MR4029845}. There the authors replace deterministic evaluation points by random ones. 

First results for monotone stochastic equations were derived by Gy\"ongy and Millet \cite{MR2139212,MR2465711}. They developed an abstract discretization theory that also covers the system~\eqref{intro:p-Laplace-stoch}. However, they only deal with the degenerate case and require restrictive assumptions on the regularity of the solution. 

In the stochastic case, due to the limited time regularity~\eqref{intro:est-stoch}, robust error quantities need to be used. Breit, Hofmanov\'a and Loisel~\cite{MR4298537} use randomized time steps to construct an algorithm that achieves almost optimal convergence in time and optimal convergence in space, i.e., for all $\alpha \in (0,1/2)$,
\begin{align*}
&\mathbb{E}_{\bft} \otimes \mathbb{E} \left[ \max_{0\leq m \leq M} \norm{u(\bft_m) - u_{m,h}}_{L^2(\mathcal{O})}^2 \hspace{-1pt}+ \hspace{-1pt}\sum_{m=1}^M \hspace{-1pt} \int_{\bft_{m-1}}^{\bft_{m}} \hspace{-3pt} \norm{V(\nabla u(s)) - V(\nabla u_{m,h})}_{L^2(\mathcal{O})}^2 \ds \right] \\
&\hspace{6em}\lesssim h^2 + \tau^{2\alpha}.
\end{align*} 
Here $\mathbb{E}_{\bft}$ denotes the expectation with respect to the random time steps. They use H\"older and Sobolev-Slobodeckij spaces to measure the time regularity of the solution. This excludes the limiting case $\alpha = 1/2$.

\subsection{Main results}
Based on deterministic time averages we propose two algorithms~\eqref{eq:algo} and~\eqref{algo:2nd} that are essentially driven by the update rule, $\mathbb{P}$-a.s. for all $m \geq 2$ and $\xi_h \in V_h$
\begin{align} \label{intro:algo}
 \left(v_m - v_{m-1}, \xi_h \right) + \tau \left( S(\nabla v_m), \nabla \xi_h \right) = \left( G(v_{m-2})\Delta_m \mathbb{W}, \xi_h \right).
\end{align}
The randomness enters through the averaged increments $\Delta_m \mathbb{W}:= \mean{W}_m - \mean{W}_{m-1}$, where $\mean{W}_m$ is the time averaged value of $W$ on the interval $[t_{m-1},t_m]$. 

Importantly, we manage to achieve optimal convergence in time with rate $1/2$ without assuming any time H\"older regularity on the solution process. Instead, we measure time regularity in terms of Nikolskii spaces. Our main results, Theorem~\ref{thm:Convergence} and~\ref{thm:Convergence2}, verify under the condition
\begin{subequations}\label{intro:reg-sol}
\begin{align} 
 u &\in L^2\left( \Omega; B^{1/2}_{2,\infty}\left(0,T; L^2\left(\mathcal{O} \right) \right)  \cap L^\infty\left(0,T; W^{1,2}\left(\mathcal{O}\right)\right)\right), \\
 V(\nabla u) &\in L^2\left( \Omega; B^{1/2}_{2,\infty}\left(0,T; L^2\left( \mathcal{O} \right) \right) \cap  L^2 \left( 0,T; W^{1,2}\left( \mathcal{O} \right) \right) \right),
\end{align}
\end{subequations}
the optimal convergence
\begin{align*}
&\mathbb{E} \left[ \max_{m=1,\ldots,M} \norm{\mean{u}_m - v_m}_{L^2_x}^2 + \sum_{m=1}^M\int_{t_{m-1}}^{t_m} \norm{V(\nabla u(s)) - V(\nabla v_m)}_{L^2_x}^2 \dd s \right] \\
&\hspace{3em}\lesssim h^2   + \tau.
\end{align*}
We want to stress that the regularity assumption~\eqref{intro:reg-sol} is even weaker than the provable regularity~\eqref{intro:est-stoch}. 

On the other hand if the solution process $u$ has certain amount of time regularity, e.g.~\eqref{intro:uest-stoch}, one can recover point evaluations, cf. Lemma~\ref{lem:exponential_stability},
\begin{align*}
 \mathbb{E}\left[ \max_{m\in\set{1,\ldots,M}} \norm{u(t_m) - \mean{u}_m}_{L^2_x}^2 \right] \lesssim \tau \ln(1+\tau^{-1})  \mathbb{E} \left[ \seminorm{u}_{ B^{1/2}_{\Phi_2,\infty} L^2_x}^2 \right] .
\end{align*}

For the implementation of~\eqref{intro:algo} one needs to sample according to the distribution of the random variable $\Delta_m \mathbb{W}$. We show in Corollary~\ref{cor:Distr-IncMean} that $\Delta_m \mathbb{W}$ is a Gaussian random variable whose variance is slightly reduced compared to the classical increments $\Delta_m W:= W(t_m) - W(t_{m-1})$. Ultimately, we provide a sampling algorithm~\eqref{algo:Sampling} that samples not only the marginal distributions but the joint distribution of the random vector $(\Delta_m W, \Delta_m \mathbb{W})_{m=1}^M$.


\subsection{Outline}
In Section~\ref{sec:2} we formulate the functional analytic setup, construct the multiplicative forcing term $G$ and recall known regularity results. Section~\ref{sec:Numerics} introduces the discrete setup and contains the main results Theorem~\ref{thm:Convergence} and Theorem~\ref{thm:Convergence2}. Next, Section~\ref{sec:Discrete} clarifies the construction of the discrete random input vectors and provides a sampling algorithm. Lastly, Section~\ref{sec:Simulations} contains numerical experiments.

\section{Mathematical setup} \label{sec:2}
This section contains classical definitions and preliminary results. It is structured as follows: Section~\ref{sec:Function spaces} introduces the function analytical framework. Section~\ref{sec:StochasticIntegral} presents the construction of the stochastic forcing. Section~\ref{sec:gradient_flow} is about the nonlinear operators $S$ and $V$. The solution concept is fixed in Section~\ref{sec:Solution}. Lastly, Section~\ref{sec:Regularity} collects regularity results. 

 Let $\mathcal{O} \subset \R^n$ for $n \geq 1$ be a bounded Lipschitz domain (further assumptions on $\mathcal{O}$ will be needed for the regularity of solutions). For some given $T>0$ we denote by $I := [0,T]$ the time interval and write $\mathcal{O}_T := I \times \mathcal{O}$ for the time space cylinder. Moreover let $\left(\Omega,\mathcal{F}, (\mathcal{F}_t)_{t\in I}, \mathbb{P} \right)$ denote a stochastic basis, i.e. a probability space with a complete and right continuous filtration $(\mathcal{F}_t)_{t\in I}$. We write $f \lesssim g$ for two non-negative quantities $f$ and $g$ if $f$ is bounded by $g$ up to a multiplicative constant. Accordingly we define $\gtrsim$ and $\eqsim$. We denote by $c$ a generic constant which can change its value from line to line.
\subsection{Function spaces} \label{sec:Function spaces}
As usual, $L^q(\mathcal{O})$ denotes the Lebesgue space and $W^{1,q}(\mathcal{O})$ the Sobolev space, where $1\leq q < \infty$. We denote by $W^{1,q}_0(\mathcal{O})$ the Sobolev spaces with zero boundary values. It is the closure of $C^\infty_0(\mathcal{O})$ (smooth functions with compact support) in the $W^{1,q}(\mathcal{O})$-norm. We denote by $W^{-1,q'}(\mathcal{O})$ the dual of $W^{1,q}_0(\mathcal{O})$. We do not distinguish in the notation between vector- and matrix-valued functions.

For a Banach space $\left(X, \norm{\cdot}_X \right)$ let $L^q(I;X)$ be the Bochner space of Bochner-measurable functions $u: I \to X$ satisfying $t \mapsto \norm{u(t)}_X \in L^q(I)$. Moreover, $C(I;X)$ is the space of continuous functions with respect to the norm-topology. We also use $C^{\alpha}(I;X)$ for the space of H\"older continuous functions. Given an Orlicz-function $\Phi: [0,\infty] \to [0,\infty]$, i.e. a convex function satisfying $ \lim_{t \to 0} \Phi(t)/t = 0$ and $\lim_{t \to \infty} \Phi(t)/t = \infty$ we define the Luxemburg-norm 
\begin{align*}
\norm{u}_{L^\Phi(I;X)} := \inf \left\{ \lambda > 0 : \int_I \Phi \left( \frac{\norm{u}_X}{\lambda} \right) \ds \leq 1 \right\}.
\end{align*}
The Orlicz space $L^\Phi(I;X)$ is the space of all Bochner-measurable functions with finite Luxemburg-norm. For more details on Orlicz-spaces we refer to \cite{DiHaHaRu}. Given $h \in I$ and $u :I \to X$ we define the difference operator $\tau_h: \set{u: I \to X} \to \set{u: I\cap I - \set{h} \to X} $ via $\tau_h(u) (s) := u(s+h) - u(s)$. The Besov-Orlicz space $B^\alpha_{\Phi,r}(I;X)$ with differentiability $\alpha \in (0,1)$, integrability $\Phi$ and fine index $r \in (1,\infty]$ is defined as the space of Bochner-measurable functions with finite Besov-Orlicz norm $\norm{\cdot}_{B^\alpha_{\Phi,r}(I;X)}$, where
\begin{align*}
\norm{u}_{B^\alpha_{\Phi,r}(I;X)} &:= \norm{u}_{L^{\Phi}(I ;X)} + \seminorm{u}_{B^\alpha_{\Phi,r}(I;X)}, \\
\seminorm{u}_{B^\alpha_{\Phi,r}(I;X)} &:= \left( \int_{I} h^{-r\alpha} \norm{\tau_h u}_{L^\Phi(I\cap I - \set{h};X)}^r \dd h \right)^\frac{1}{r}.
\end{align*} 
In the case $r = \infty$ the integral in $h$ is replaced by an essential supremum and the space is commonly called Nikolskii-Orlicz space. When $\Phi(t) = t^p$ for some $p \in (1,\infty)$ we call the space $B^\alpha_{\Phi,r}(I;X) =B^\alpha_{p,r}(I;X)$ Besov space.

Similarly, given a Banach space $\left(Y, \norm{\cdot}_Y \right)$, we define $L^q(\Omega;Y)$ as the Bochner space of Bochner-measurable functions $u: \Omega \to Y$ satisfying $\omega \mapsto \norm{u(\omega)}_Y \in L^q(\Omega)$. The space $L^q_{\mathcal{F}}(\Omega \times I;X)$ denotes the subspace of $X$-valued $(\mathcal{F}_t)_{t\in I}$-progressively measurable processes. Let $\left(U,\norm{\cdot}_U \right)$ be a separable Hilbert space. $L_2(U;L^2(\mathcal{O}))$ denotes the space of Hilbert-Schmidt operators from $U$ to $L^2(\mathcal{O})$ with the norm $\norm{z}_{L_2(U;L^2_x)}^2:= \sum_{j\in \mathbb{N}} \norm{z(u_j)}_{L^2(\mathcal{O})}^2 $ where $\set{u_j}_{j \in \mathbb{N}}$ is some orthonormal basis of $U$. We abbreviate the notation $L^q_\omega L^q_t L^q_x := L^q(\Omega;L^q(I;L^q(\mathcal{O}))) $ and $L^{q-} := \bigcap_{r< q} L^r$.

\subsection{Stochastic integrals} \label{sec:StochasticIntegral}
In order to construct the stochastic forcing term, we impose the following conditions:
\begin{assumption}\label{ass:Noise}
\begin{enumerate}
\item Let $\left( U, \norm{\cdot}_U \right)$ be a separable Hilbert space. We assume that $W$ is an $U$-valued cylindrical Wiener process with respect to $(\mathcal{F}_t)_{t\in I}$. Formally $W$ can be represented as
\begin{align} \label{rep:W}
W = \sum_{j \in \mathbb{N}} u_j \beta^j,
\end{align}
where $\set{u_j}_{j \in \mathbb{N}}$ is an orthonormal basis of $U$ and $\set{\beta^j}_{j\in \mathbb{N}}$ are independent $1$-dimensional standard Brownian motions.
\item \label{ass:cond2} Let $v \in L^2_{\mathcal{F}}(\Omega \times I; L^2_x)$. We assume that $G(v)(\cdot): U \to L_{\mathcal{F}}^2(\Omega \times I; L^2_x)$ is given by 
\begin{align*}
u \mapsto G(v)(u) := \sum_{j \in \mathbb{N}} g_j(\cdot,v)( u_j, u)_U ,
\end{align*}
where $\set{g_j}_{j\in \mathbb{N}} \in C(\mathcal{O} \times \R^N; \R^N)$ with
\begin{enumerate}[label=(\roman*)]
\item (sublinear growth) for all $x \in \mathcal{O}$ and $\xi \in \R^N$ it holds
\begin{align}\label{ass:growth}
\sum_{j\in \mathbb{N}} \abs{g_j(x,\xi)}^2  \leq c_{\text{growth}}(1+\abs{\xi}^2),
\end{align} 
\item (Lipschitz continuity) for all $x \in \mathcal{O}$ and $\xi_1, \xi_2 \in \R^N$ it holds
\begin{align} \label{ass:Lipschitz}
\sum_{j\in \mathbb{N}} \abs{g_j(x,\xi_1) -g_j(x,\xi_2) }^2 \leq c_{\text{lip}} \abs{\xi_1 - \xi_2}^2.
\end{align} 
\end{enumerate}
\end{enumerate}
\end{assumption}

Now it is a classical result, see for example the book of Pr\'{e}v\^{o}t and R\"ockner \cite{MR2329435}, that we can construct a corresponding stochastic integral.
\begin{proposition}\label{prop:ConstructionStochIntegral}
Let Assumption \ref{ass:Noise} be true. Then the operator $\mathcal{I}$ defined through
\begin{align}\label{def:stoch_integral}
\mathcal{I}(G(v)) := \int_0^{\bullet} G(v)(\dd W_s) := \sum_{j \in \mathbb{N}} \int_0^{\bullet} g_j(\cdot,v) \dd \beta^j_s 
\end{align} 
defines a bounded linear operator from $L^2_{\mathcal{F}}(\Omega \times I;L_2(U; L^2_x))$ to $ L^2_\omega C_t L^2_x $. Moreover,
\begin{enumerate}
\item $\mathcal{I}(G(v))$ is an $L^2_x$-valued martingale with respect to $\left(\mathcal{F}_t\right)_{t \in I}$, 
\item (It\^o isometry) for all $t \in I$ it holds
\begin{align} \label{eq:ito-iso}
\mathbb{E} \left[ \norm{\mathcal{I}(G(v)) (t)}_{L^2_x}^2 \right] =\mathbb{E} \left[\int_0^t \norm{G(v)}^2_{L_2(U;L^2_x)} \ds \right] .
\end{align}
\end{enumerate}
\end{proposition}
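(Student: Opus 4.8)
The plan is to follow the standard construction of the Hilbert-space-valued It\^o integral by isometric extension from elementary integrands, and then to upgrade the natural $L^2_\omega L^2_x$-valued endpoint to the claimed $L^2_\omega C_t L^2_x$ statement via Doob's maximal inequality. First I would introduce the elementary integrands: processes of the form $\Phi(s) = \sum_{i=0}^{k-1} \Phi_i \mathds{1}_{(s_i,s_{i+1}]}(s)$ with a partition $0 = s_0 < \dots < s_k = T$ and each $\Phi_i$ a bounded, $\mathcal{F}_{s_i}$-measurable, $L_2(U;L^2_x)$-valued random variable. For these the integral is defined path-wise by the finite sum $\mathcal{I}(\Phi)(t) = \sum_{i} \Phi_i \big( W(s_{i+1}\wedge t) - W(s_i \wedge t) \big)$, which is a continuous, square-integrable, $L^2_x$-valued martingale, since each increment is centered and independent of $\mathcal{F}_{s_i}$. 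Expanding the square, the cross terms vanish by independence of the increments and of distinct coordinate Brownian motions in the representation \eqref{rep:W}, while the diagonal terms sum to $\sum_i (s_{i+1}\wedge t - s_i \wedge t)\, \mathbb{E}\, \norm{\Phi_i}_{L_2(U;L^2_x)}^2$; this is precisely the It\^o isometry \eqref{eq:ito-iso} on elementary integrands.

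Second, I would show that the elementary integrands are dense in $L^2_{\mathcal{F}}(\Omega \times I; L_2(U;L^2_x))$. By the isometry just established, $\mathcal{I}$ is an isometry from the elementary integrands into $L^2_\omega L^2_x$ (at the terminal time), hence extends uniquely to a bounded linear operator on the whole space, and \eqref{eq:ito-iso} persists in the limit. This density step is where the progressive-measurability bookkeeping lives: given a general $G(v) \in L^2_{\mathcal{F}}$ one approximates first by bounded processes, then by piece-wise constant-in-time processes obtained by averaging over the preceding subinterval of a refining partition, which keeps the approximants adapted and lets the elementary-integral formula apply.

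Third, to reach $L^2_\omega C_t L^2_x$ and the martingale claim, I would invoke Doob's $L^2$ maximal inequality for the continuous $L^2_x$-valued martingales $\mathcal{I}(\Phi)$, namely $\mathbb{E}\big[ \sup_{t \in I} \norm{\mathcal{I}(\Phi)(t)}_{L^2_x}^2 \big] \leq 4\, \mathbb{E} \norm{\mathcal{I}(\Phi)(T)}_{L^2_x}^2 = 4\, \mathbb{E} \int_0^T \norm{\Phi}_{L_2(U;L^2_x)}^2 \ds$. Consequently the approximating sequence is Cauchy not just at each fixed time but in $L^2_\omega C_t L^2_x$, so the limit admits a continuous modification and inherits both the martingale property and, by a dominated passage to the limit, the isometry \eqref{eq:ito-iso}. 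I expect the main obstacle to be the density argument of the second step rather than any single estimate: one must be careful that the piece-wise constant approximants of $G(v)$ are genuinely adapted, not merely jointly measurable, so that the explicit martingale formula is available. Once that is in place, both the isometry computation and the Doob-based upgrade to continuous paths are routine given the structure of the cylindrical noise \eqref{rep:W} and the Hilbert-Schmidt norm.
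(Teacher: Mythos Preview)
Your approach is correct and is precisely the standard construction found in the literature. Note, however, that the paper does not give its own proof of this proposition: it simply states the result and refers to the book of Pr\'{e}v\^{o}t and R\"{o}ckner, so there is nothing to compare against beyond observing that your sketch follows the same classical route as that reference.
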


\subsection{Perturbed gradient flow} \label{sec:gradient_flow}
Let $\kappa \geq 0$ and $p \in (1,\infty)$. For $\xi \in \R^{n\times N}$ we define
\begin{align} \label{eq:S}
S(\xi) := \varphi'(\abs{\xi}) \frac{\xi}{\abs{\xi}} = \left(\kappa + \abs{\xi} \right)^{p-2} \xi
\end{align}
and
\begin{align}\label{eq:V}
V(\xi) := \sqrt{\varphi'(\abs{\xi})} \frac{\xi}{\abs{\xi}} = \left( \kappa + \abs{\xi} \right)^\frac{p-2}{2} \xi,
\end{align}
where $\varphi(t):= \int_0^t \left(\kappa + s \right)^{p-2} s \ds$. The nonlinear functions $S$ and $V$ are closely related. In particular the following lemmata are of great importance. The proofs can be found in e.g. \cite{DieE08}. For more details we refer to \cite{DR,BelDieKre2012,DieForTomWan20}.
\begin{lemma}[$V$-coercivity] \label{lem:V-coercive}
Let $\xi_1, \xi_2 \in \R^{n\times N}$. Then it holds
\begin{align} \label{eq:V-coercive}
\begin{aligned}
(S(\xi_1) - S(\xi_2)) : (\xi_1 - \xi_2) &\eqsim \abs{V(\xi_1) - V(\xi_2)}^2 \\
&\eqsim \left( \kappa + \abs{\xi_1} + \abs{\xi_1 - \xi_2} \right)^{p-2} \abs{\xi_1 - \xi_2}^2.
\end{aligned}
\end{align}
\end{lemma}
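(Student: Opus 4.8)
The plan is to reduce both equivalences to a single scalar quantity, namely the averaged weight $I(\xi_1,\xi_2) := \int_0^1 \big(\kappa + \abs{\xi_\theta}\big)^{p-2}\dd\theta$, where $\xi_\theta := \xi_2 + \theta(\xi_1 - \xi_2)$ interpolates between the two matrices. Writing $S(\xi) = a(\abs{\xi})\xi$ with $a(t) = (\kappa+t)^{p-2}$, a direct computation of the Jacobian gives, for any direction $\eta$,
\begin{align*}
DS(\xi)\eta : \eta = a(\abs{\xi})\abs{\eta}^2 + a'(\abs{\xi})\frac{(\xi:\eta)^2}{\abs{\xi}}.
\end{align*}
Since $a(t) + a'(t)\,t = (\kappa+t)^{p-3}\big(\kappa + (p-1)t\big) \eqsim (\kappa+t)^{p-2}$ for every $p > 1$, and $0 \le (\xi:\eta)^2/\abs{\xi}^2 \le \abs{\eta}^2$, the quadratic form is pinched between two multiples of $(\kappa+\abs{\xi})^{p-2}\abs{\eta}^2$, with constants depending only on $p$ (positivity of the lower coefficient in the range $p<2$ is exactly where $p>1$ is used). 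Integrating $S(\xi_1) - S(\xi_2) = \int_0^1 DS(\xi_\theta)(\xi_1-\xi_2)\dd\theta$ against $\xi_1 - \xi_2$ yields $(S(\xi_1)-S(\xi_2)):(\xi_1-\xi_2) \eqsim I(\xi_1,\xi_2)\,\abs{\xi_1-\xi_2}^2$.

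Next I would evaluate the average, showing $I(\xi_1,\xi_2) \eqsim \big(\kappa + \abs{\xi_1} + \abs{\xi_1-\xi_2}\big)^{p-2}$, which is precisely the third quantity in the lemma. The easy direction (upper bound for $p\ge 2$, lower bound for $p<2$) is immediate from monotonicity of $t\mapsto(\kappa+t)^{p-2}$ and $\abs{\xi_\theta}\le\abs{\xi_1}+\abs{\xi_1-\xi_2}$; the reverse inequalities follow from the elementary fact that $\abs{\xi_\theta}$ exceeds a fixed fraction of $\abs{\xi_1}+\abs{\xi_1-\xi_2}$ on a subinterval of $[0,1]$ of definite length. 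This step also records the symmetric identity $\kappa+\abs{\xi_1}+\abs{\xi_1-\xi_2}\eqsim\kappa+\abs{\xi_2}+\abs{\xi_1-\xi_2}$, which makes the final expression manifestly symmetric in $\xi_1,\xi_2$ as it must be.

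For the second equivalence I treat $V(\xi) = b(\abs{\xi})\xi$ with $b(t) = (\kappa+t)^{(p-2)/2}$ analogously. The upper bound is straightforward: $\abs{DV(\xi)\eta} \lesssim (\kappa+\abs{\xi})^{(p-2)/2}\abs{\eta}$, so integrating and squaring reduces the estimate $\abs{V(\xi_1)-V(\xi_2)}^2 \lesssim I(\xi_1,\xi_2)\abs{\xi_1-\xi_2}^2$ to bounding $\big(\int_0^1 (\kappa+\abs{\xi_\theta})^{(p-2)/2}\dd\theta\big)^2$ by $I(\xi_1,\xi_2)$, which holds because both sides are comparable to $(\kappa+\abs{\xi_1}+\abs{\xi_1-\xi_2})^{p-2}$ by the averaging estimate just established.

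The main obstacle is the lower bound $\abs{V(\xi_1)-V(\xi_2)}^2 \gtrsim I(\xi_1,\xi_2)\abs{\xi_1-\xi_2}^2$: unlike the monotone $S$-form, the vector integral $\int_0^1 DV(\xi_\theta)(\xi_1-\xi_2)\dd\theta$ can a priori cancel, so I cannot read it off from the integrand. I would split into two regimes. In the non-degenerate regime $\abs{\xi_1-\xi_2}\le\tfrac12(\kappa+\abs{\xi_1})$ one has $\kappa+\abs{\xi_\theta}\eqsim\kappa+\abs{\xi_1}$ uniformly in $\theta$, so $DV(\xi_\theta)$ is uniformly elliptic with eigenvalues $\eqsim(\kappa+\abs{\xi_1})^{(p-2)/2}$ bounded below; its average applied to $\xi_1-\xi_2$ cannot cancel and the bound follows. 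In the degenerate regime $\abs{\xi_1-\xi_2}>\tfrac12(\kappa+\abs{\xi_1})$ one has $\kappa+\abs{\xi_1}+\abs{\xi_1-\xi_2}\eqsim\abs{\xi_1-\xi_2}$, and a direct case analysis on the sizes of $\abs{\xi_1},\abs{\xi_2}$ gives $\abs{V(\xi_1)-V(\xi_2)}\gtrsim(\kappa+\abs{\xi_1-\xi_2})^{(p-2)/2}\abs{\xi_1-\xi_2}$. I expect the singular range $p<2$, where the weight blows up near $\abs{\xi}=0$, to demand the most care in this split. Abstractly the whole argument can be packaged through the shifted $N$-function $\varphi_{\abs{\xi_1}}$, for which both quantities are comparable to $\varphi_{\abs{\xi_1}}(\abs{\xi_1-\xi_2})\eqsim(\kappa+\abs{\xi_1}+\abs{\xi_1-\xi_2})^{p-2}\abs{\xi_1-\xi_2}^2$; this is the most transparent route and sidesteps the cancellation issue by structural means.
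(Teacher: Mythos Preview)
Your proposal is correct and follows precisely the standard route taken in the references the paper cites (e.g.\ Diening--Ettwein); note that the paper itself does not supply a proof of this lemma but only writes ``The proofs can be found in e.g.\ \cite{DieE08}'', so there is nothing further to compare against. The ingredients you list---the mean-value representation via $DS(\xi_\theta)$, the eigenvalue bounds $\min(1,p-1)(\kappa+t)^{p-2}\le a(t)+a'(t)t\le\max(1,p-1)(\kappa+t)^{p-2}$, the averaging identity $\int_0^1(\kappa+\abs{\xi_\theta})^{q}\dd\theta\eqsim(\kappa+\abs{\xi_1}+\abs{\xi_1-\xi_2})^{q}$, and the shifted $N$-function reformulation---are exactly how these inequalities are established in the literature you would find behind that citation, and your identification of the $V$ lower bound as the nontrivial direction is accurate.
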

\begin{lemma}[generalized Young's inequality] \label{lem:gen-young}
Let $\xi_1, \xi_2, \xi_3 \in \R^{n\times N}$ and $\delta >0$. Then there exists $c_\delta \geq 1$ such that
\begin{align}\label{eq:gen-young}
\left( S(\xi_1) - S(\xi_2) \right): \left(\xi_2 - \xi_3 \right) \leq \delta \abs{V(\xi_1) - V(\xi_2)}^2 + c_\delta \abs{V(\xi_2) - V(\xi_3)}^2.
\end{align} 
\end{lemma}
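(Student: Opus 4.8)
The plan is to reduce the inequality to a scalar Young-type estimate for the family of shifted $N$-functions $\varphi_a$ associated with $\varphi(t) = \int_0^t (\kappa+s)^{p-2}s\ds$. From the theory of shifted $N$-functions (see \cite{DieE08}) one has $\varphi_a(t) \eqsim (\kappa + a + t)^{p-2} t^2$ together with the pointwise equivalences
\begin{align*}
\abs{S(\xi_1) - S(\xi_2)} &\eqsim \varphi'_{\abs{\xi_2}}(\abs{\xi_1 - \xi_2}), \\
\abs{V(\xi_1) - V(\xi_2)}^2 &\eqsim \varphi_{\abs{\xi_2}}(\abs{\xi_1 - \xi_2}),
\end{align*}
the second being exactly Lemma~\ref{lem:V-coercive} after noting $(\kappa + \abs{\xi_1} + \abs{\xi_1-\xi_2}) \eqsim (\kappa + \abs{\xi_2} + \abs{\xi_1-\xi_2})$, which follows from $\abs{\xi_1} \leq \abs{\xi_2} + \abs{\xi_1 - \xi_2}$ and its symmetric counterpart. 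The structural point that makes the proof work is that \emph{both} $V$-differences on the right-hand side can be anchored at the same shift base $\abs{\xi_2}$: for the term $\abs{V(\xi_2) - V(\xi_3)}^2$ this base appears naturally, since $\xi_2$ is the first argument, and Lemma~\ref{lem:V-coercive} gives $\abs{V(\xi_2) - V(\xi_3)}^2 \eqsim \varphi_{\abs{\xi_2}}(\abs{\xi_2 - \xi_3})$.

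First I would bound the left-hand side by Cauchy--Schwarz and insert the equivalence for $S$,
\begin{align*}
(S(\xi_1) - S(\xi_2)):(\xi_2 - \xi_3) \leq \abs{S(\xi_1) - S(\xi_2)}\,\abs{\xi_2 - \xi_3} \eqsim \varphi'_{\abs{\xi_2}}(\abs{\xi_1 - \xi_2})\,\abs{\xi_2 - \xi_3}.
\end{align*}
Next I would apply the refined Young inequality for the $N$-function $\psi := \varphi_{\abs{\xi_2}}$: since $\psi$ satisfies a $\Delta_2$-condition whose constant is \emph{uniform} in the base $\abs{\xi_2}$, for every $\delta > 0$ there is $c_\delta \geq 1$ (independent of $\xi_2$) with $\psi'(s)\,t \leq \delta\,\psi(s) + c_\delta\,\psi(t)$ for all $s,t \geq 0$. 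Choosing $s = \abs{\xi_1 - \xi_2}$ and $t = \abs{\xi_2 - \xi_3}$ yields
\begin{align*}
\varphi'_{\abs{\xi_2}}(\abs{\xi_1-\xi_2})\,\abs{\xi_2-\xi_3} \leq \delta\, \varphi_{\abs{\xi_2}}(\abs{\xi_1-\xi_2}) + c_\delta\, \varphi_{\abs{\xi_2}}(\abs{\xi_2-\xi_3}).
\end{align*}
Translating both terms back through the displayed equivalences produces $\delta\abs{V(\xi_1)-V(\xi_2)}^2 + c_\delta\abs{V(\xi_2)-V(\xi_3)}^2$, which is the claim (after absorbing the $p$-dependent equivalence constants into $c_\delta$ and rescaling $\delta$).

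The main obstacle is purely the bookkeeping of the shift: one must verify that the single base $\abs{\xi_2}$ simultaneously governs the $\varphi'$-representation of $S(\xi_1)-S(\xi_2)$ and both $V$-differences, so that a single application of Young's inequality for $\varphi_{\abs{\xi_2}}$ suffices and no change of shift is required. Once the equivalences above are in place, the only genuinely analytic input is the uniform-in-$a$ refined Young inequality for shifted $N$-functions, which is standard and available from \cite{DieE08}. All constants depend only on $p$ and are uniform in $\kappa \geq 0$, so the resulting $c_\delta$ is admissible.
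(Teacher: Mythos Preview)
Your argument is correct and is precisely the shifted $N$-function approach of \cite{DieE08} that the paper defers to for this lemma (the paper does not give its own proof, it simply cites \cite{DieE08}). The key bookkeeping point you flag---that both $V$-differences can be anchored at the common shift $\abs{\xi_2}$ so a single uniform Young inequality for $\varphi_{\abs{\xi_2}}$ suffices---is exactly what makes the standard proof go through.
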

\begin{lemma} \label{lem:1side-young}Let $\xi_1, \xi_2, \xi_3 \in \R^{n\times N}$ and $\delta >0$. Then there exists $c_\delta \geq 1$ such that
\begin{align} \label{eq:1side-young}
\left( S(\xi_1) - S(\xi_2) \right) : \xi_3 \leq \delta \abs{V(\xi_1) - V(\xi_2)}^2 \hspace{-2pt}+ c_\delta \left(\kappa + \abs{\xi_1} + \abs{\xi_1 - \xi_2}\right)^{p-2}\abs{\xi_3}^2.
\end{align}
\end{lemma}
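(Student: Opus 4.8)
The plan is to reduce the statement to a pointwise size estimate for the difference $S(\xi_1)-S(\xi_2)$, followed by a weighted scalar Young inequality. The one nontrivial ingredient, which is standard in the theory of shifted $N$-functions (see \cite{DieE08}), is the difference bound
\begin{align*}
\abs{S(\xi_1)-S(\xi_2)} \eqsim \left(\kappa+\abs{\xi_1}+\abs{\xi_1-\xi_2}\right)^{p-2}\abs{\xi_1-\xi_2}.
\end{align*}
This is the natural companion to Lemma~\ref{lem:V-coercive}, which simultaneously provides $\abs{V(\xi_1)-V(\xi_2)}^2 \eqsim \left(\kappa+\abs{\xi_1}+\abs{\xi_1-\xi_2}\right)^{p-2}\abs{\xi_1-\xi_2}^2$, so that the very same weight governs both quantities.

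First I would abbreviate the common weight by $w := \left(\kappa+\abs{\xi_1}+\abs{\xi_1-\xi_2}\right)^{p-2} \geq 0$ and bound the pairing trivially by Cauchy--Schwarz,
\begin{align*}
\left(S(\xi_1)-S(\xi_2)\right):\xi_3 \leq \abs{S(\xi_1)-S(\xi_2)}\,\abs{\xi_3} \lesssim w\,\abs{\xi_1-\xi_2}\,\abs{\xi_3},
\end{align*}
using the difference bound in the last step. The purpose of isolating $w$ is that it splits evenly onto the two factors: writing $w\,\abs{\xi_1-\xi_2}\,\abs{\xi_3} = \bigl(w^{1/2}\abs{\xi_1-\xi_2}\bigr)\bigl(w^{1/2}\abs{\xi_3}\bigr)$ and applying the scalar inequality $ab \leq \epsilon a^2 + \tfrac{1}{4\epsilon}b^2$ gives $w\,\abs{\xi_1-\xi_2}\,\abs{\xi_3} \leq \epsilon\, w\,\abs{\xi_1-\xi_2}^2 + \tfrac{1}{4\epsilon}\, w\,\abs{\xi_3}^2$ for every $\epsilon>0$.

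The two resulting terms are precisely the two terms on the right-hand side of the claim. By Lemma~\ref{lem:V-coercive} the first satisfies $w\,\abs{\xi_1-\xi_2}^2 \eqsim \abs{V(\xi_1)-V(\xi_2)}^2$; hence, choosing $\epsilon$ equal to $\delta$ divided by the product of the implicit constants collected in the $\lesssim$ and $\eqsim$ steps, the first term is bounded by $\delta\,\abs{V(\xi_1)-V(\xi_2)}^2$. The second term then reads $\tfrac{1}{4\epsilon}\,w\,\abs{\xi_3}^2 = c_\delta\left(\kappa+\abs{\xi_1}+\abs{\xi_1-\xi_2}\right)^{p-2}\abs{\xi_3}^2$, and enlarging $c_\delta$ if necessary secures $c_\delta \geq 1$, which is exactly the asserted bound.

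The only genuine content lies in the pointwise difference estimate for $S$ in the first step; once that is available, the remainder is bookkeeping of constants. Accordingly I expect the main (minor) obstacle to be purely technical: ensuring that the single weight $w$ carrying the exponent $p-2$ can legitimately be shared between the $\abs{\xi_1-\xi_2}^2$ and $\abs{\xi_3}^2$ factors uniformly for all $p\in(1,\infty)$, covering both the singular ($p<2$) and degenerate ($p>2$) regimes, and tracking the equivalence constants accurately enough that the coefficient of $\abs{V(\xi_1)-V(\xi_2)}^2$ becomes exactly $\delta$. The degenerate case $\xi_1=\xi_2=0$ with $\kappa=0$ makes both sides vanish and requires no separate argument.
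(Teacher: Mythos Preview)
Your proof is correct and is precisely the standard argument: bound $\abs{S(\xi_1)-S(\xi_2)}$ by $w\abs{\xi_1-\xi_2}$ with the shared weight $w=(\kappa+\abs{\xi_1}+\abs{\xi_1-\xi_2})^{p-2}$, split $w$ evenly via Young's inequality, and identify $w\abs{\xi_1-\xi_2}^2$ with $\abs{V(\xi_1)-V(\xi_2)}^2$ through Lemma~\ref{lem:V-coercive}. The paper does not give its own proof of this lemma but defers to \cite{DieE08} and the related references, where exactly this argument appears; your write-up matches that approach.
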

\begin{remark}
Lemma \ref{lem:V-coercive} and \ref{lem:gen-young} are still valid if one replaces $\varphi$ in \eqref{eq:S} and \eqref{eq:V} by any uniformly convex $N$-function (cf. \cite{DieForTomWan20}).
\end{remark} 
Given some initial condition $u_0 : \Omega \times \mathcal{O} \to \R^N$ and a stochastic force $(G,W)$ in the sense of Assumption \ref{ass:Noise} we are interested in the system
\begin{subequations}
\label{eq:p-Laplace}
\begin{alignat}{2}\label{eq:Gradient_flow}
\dd u - \Div S(\nabla u) \dt &= G(u) \dd W &&\text{ in } \Omega \times \mathcal{O}_T, 
\end{alignat}
with boundary and initial conditions given by 
\begin{alignat}{2}
u &= 0 &&\text{ on } \Omega \times I \times \partial \mathcal{O}, \\
u(0) &= u_0 &&\text{ on } \Omega \times  \mathcal{O}.
\end{alignat}
\end{subequations}
The system \eqref{eq:Gradient_flow} is a perturbed version of the gradient flow of the energy $\mathcal{J}: W^{1,p}_{0,x} \to [0,\infty)$ given by
\begin{align} \label{eq:energy}
\mathcal{J}(u):= \int_{\mathcal{O}} \varphi(\abs{\nabla u}) \dx.
\end{align}
\subsection{Weak and strong solutions} \label{sec:Solution}
We fix the concept of solutions as follows.
\begin{definition} \label{def:sol}
Let $u_0 \in L^2_\omega L^2_x$ be $\mathcal{F}_0$-measurable, $p \in (1,\infty)$ and $(G,W)$ be given by Assumption \ref{ass:Noise}. An $(\mathcal{F}_t)$-adapted process $u \in L^2_x$ is called weak solution to \eqref{eq:p-Laplace} if 
\begin{enumerate}
\item $u \in L^2_\omega C_t L^2_x \cap L^p_\omega L^p_t W^{1,p}_{0,x}$,
\item for all $t \in I$, $\xi \in C^\infty_{0,x}$ and $\mathbb{P}-a.s.$ it holds
\begin{align} \label{eq:p-Laplace_weak}
\int_{\mathcal{O}} (u(t) - u_0) \cdot \xi \dx + \int_0^t \int_{\mathcal{O}} S(\nabla u) :\nabla \xi \dx \ds = \int_{\mathcal{O}} \int_0^t G(u)\dd W_s \cdot \xi \dx.
\end{align}
\end{enumerate}
The process $u$ is called strong solution if it is a weak solution and additionally satisfies
\begin{enumerate}
\item $\Div S(\nabla u) \in L^2_\omega L^2_t L^2_x$ ,
\item for all $t \in I$ and $\mathbb{P}-a.s.$ it holds
\begin{align}
u(t) - u_0 - \int_0^t \Div S(\nabla u) \ds = \int_0^t G(u) \dd W_s
\end{align}
as an equation in $L^2_x$.
\end{enumerate}
\end{definition}

The next step is to answer the question about existence of weak or even strong solutions. Weak solutions can be constructed through a variational approach that uses the monotonicity of the nonlinear diffusion operator $S$ as presented in \cite{LiRo}. In fact, Assumption \ref{ass:Noise} is sufficient to guarantee existence of a unique weak solution. Proving existence of a strong solution is more delicate and usually requires, not only assumptions on the growth of $G$, but also on the gradient of $G$, e.g. for all $x \in \mathcal{O}$, $\xi \in \R^N$
\begin{subequations} \label{ass:grad_growth}
\begin{align}
\sum_{j \in \mathbb{N}} \abs{\nabla_{x} g_j(x,\xi)}^2 &\leq c(1+\abs{\xi})^2, \\
\sum_{j \in \mathbb{N}} \abs{\nabla_{\xi} g_j(x,\xi)}^2 &\leq c.
\end{align}
\end{subequations}
In \cite{Gess2012Strong} a general approach for the construction of strong solutions to gradient flow like equations is presented. In particular, it includes the case of the $p$-Laplace equations.
\begin{theorem}[\cite{Gess2012Strong}, Theorem 4.12] \label{thm:strong-sol}
Assume $p\geq 2$ and $\mathcal{O}$ is a bounded convex domain. Let $(G,W)$ be given by Assumption \ref{ass:Noise}. Additionally, assume \eqref{ass:grad_growth} and $u_0 \in L^{p+\varepsilon}_\omega W^{1,p}_x\cap L_\omega^{2+\varepsilon} L^2_x$ be $\mathcal{F}_0$-measurable for some $\varepsilon > 0$. Then there exists a unique strong solution $u$ to \eqref{eq:p-Laplace}. Moreover,
\begin{align}\label{eq:est-strong}
\mathbb{E}\left[\sup_{t\in I} \norm{u}_{W^{1,p}_x}^p  + \int_0^T \norm{\Div S(\nabla u)}_{L^2_x}^2 \dt \right] \lesssim \mathbb{E} \left[ \norm{u_0}_{W^{1,p}_x}^p \right] + 1.
\end{align}
\end{theorem}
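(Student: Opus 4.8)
The plan is to work in the Gelfand triple $W^{1,p}_{0,x} \hookrightarrow L^2_x \hookrightarrow W^{-1,p'}_x$ and to exploit that the drift $-\Div S(\nabla u)$ is exactly the $L^2_x$-gradient $D\mathcal{J}(u)$ of the convex energy~\eqref{eq:energy}: recalling $S(\xi)=\varphi'(\abs{\xi})\xi/\abs{\xi}$ from~\eqref{eq:S}, one computes $\langle D\mathcal{J}(u),v\rangle=\int_{\mathcal{O}}S(\nabla u):\nabla v\dx=\langle -\Div S(\nabla u),v\rangle$ for $v\in W^{1,p}_{0,x}$. Existence and uniqueness of a \emph{weak} solution in the sense of Definition~\ref{def:sol} is already furnished by the monotonicity of $S$ (cf.\ \cite{LiRo}), so the whole content is the \emph{strong} estimate~\eqref{eq:est-strong}, which places $\Div S(\nabla u)$ in $L^2_\omega L^2_t L^2_x$ and thereby upgrades $u$ to a strong solution. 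Since every strong solution is in particular weak, uniqueness is inherited from the weak theory for free.

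The core of the argument is an energy balance obtained by applying It\^o's formula to $t\mapsto\mathcal{J}(u(t))$. Abbreviating $g_j:=g_j(\cdot,u)$ and using $D\mathcal{J}(u)=-\Div S(\nabla u)$, the drift produces the coercive good term and one formally obtains
\begin{align*}
\mathcal{J}(u(t)) + \int_0^t \norm{\Div S(\nabla u)}_{L^2_x}^2 \ds &= \mathcal{J}(u_0) + M_t \\
&\quad + \tfrac12 \int_0^t \sum_{j} \int_{\mathcal{O}} \partial_\xi S(\nabla u)[\nabla g_j] : \nabla g_j \dx \ds ,
\end{align*}
where $M_t:=-\int_0^t\langle\Div S(\nabla u),G(u)\dd W_s\rangle$ is a local martingale. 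The first step is to bound the It\^o correction. Writing $\nabla g_j=\nabla_x g_j+\nabla_\xi g_j\,\nabla u$, using the structural bound $\abs{\partial_\xi S(\xi)}\lesssim(\kappa+\abs{\xi})^{p-2}$ together with the gradient hypotheses~\eqref{ass:grad_growth}, and invoking Young's inequality and the Poincar\'e inequality, the correction is controlled by $c\int_{\mathcal{O}}(\kappa+\abs{\nabla u})^{p-2}(1+\abs{u}^2+\abs{\nabla u}^2)\dx\lesssim c\,(1+\mathcal{J}(u))$, where I use $p\geq 2$ and that $\varphi(\abs{\nabla u})\eqsim(\kappa+\abs{\nabla u})^{p-2}\abs{\nabla u}^2$.

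Taking the supremum in $t$, estimating $\mathbb{E}\sup_t\abs{M_t}$ by the Burkholder--Davis--Gundy inequality and absorbing a small multiple of $\mathbb{E}\int_0^T\norm{\Div S(\nabla u)}_{L^2_x}^2$ into the left-hand side, a Gr\"onwall argument in time then yields~\eqref{eq:est-strong}; here the assumptions $u_0\in L^{p+\varepsilon}_\omega W^{1,p}_x\cap L^{2+\varepsilon}_\omega L^2_x$ control $\mathbb{E}[\mathcal{J}(u_0)]$ and provide the integrability needed for the a priori bounds. The formal computation above is, of course, not directly admissible: $\mathcal{J}$ is not $C^2$ on $L^2_x$, $S$ degenerates on $\set{\nabla u=0}$, and $\Div S(\nabla u)\in L^2_x$ is precisely the quantity to be established. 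I would therefore carry out the estimate on a regularized problem---replacing $S$ by a smooth non-degenerate $S_\delta$ (strictly positive regularization parameter, mollified) and passing through a Galerkin scheme on which It\^o's formula is classical---derive the balance with constants uniform in the regularization, and then pass to the limit using weak lower semicontinuity of $\mathcal{J}$ and weak compactness in $L^2_\omega L^2_t L^2_x$, identifying the limit with the unique weak solution. The convexity of $\mathcal{O}$ enters exactly where the uniform $L^2$-bound on $\Div S_\delta(\nabla u_\delta)$ must be propagated up to the boundary: in a difference-quotient argument the boundary contributions carry the sign of the second fundamental form of $\partial\mathcal{O}$, which is nonnegative precisely for convex $\mathcal{O}$. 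I expect the main obstacle to be making this It\^o balance rigorous---simultaneously handling the degeneracy of $S$, securing the $L^2$-regularity of $\Div S(\nabla u)$ up to the boundary, and controlling the correction term with exactly the structural assumptions~\eqref{ass:grad_growth}---rather than the Gr\"onwall bookkeeping, which is routine once the balance is in place.
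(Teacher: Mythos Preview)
The paper does not prove this theorem; it is quoted as Theorem~4.12 of \cite{Gess2012Strong} and used as a black box, so there is no in-paper proof to compare against. Your sketch---It\^o's formula applied to the energy~$\mathcal{J}$ so that the drift produces the coercive term $\norm{\Div S(\nabla u)}_{L^2_x}^2$, control of the correction via $\abs{\partial_\xi S(\xi)}\lesssim(\kappa+\abs{\xi})^{p-2}$ together with~\eqref{ass:grad_growth}, BDG plus Gr\"onwall on a Galerkin/regularized approximation, and convexity of~$\mathcal{O}$ entering through the sign of the boundary term---is precisely the strategy of the cited reference and is sound. One technical point to tighten: in the BDG step the constant $c_\varepsilon$ multiplying $\mathbb{E}\sup_t(1+\mathcal{J}(u))$ is large, so direct absorption fails. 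The standard fix is a two-pass argument: first take expectation \emph{without} the supremum (the martingale vanishes), run Gr\"onwall to bound $\sup_t\mathbb{E}[\mathcal{J}(u(t))]$ and $\mathbb{E}\int_0^T\norm{\Div S(\nabla u)}_{L^2_x}^2\dt$, and only then return with BDG---now using the already-established bound on $\mathbb{E}\int\norm{\Div S}^2$---to upgrade to $\mathbb{E}\sup_t\mathcal{J}(u(t))$.
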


\subsection{Regularity of strong solutions} \label{sec:Regularity}
A key ingredient in the error analysis of numerical algorithms are the improved regularity properties of strong solutions in comparison to those of weak solutions. Concerning time regularity, we prove in \cite{wichmann2021temporal} that the strong solution enjoys $1/2$-time differentiability in an exponential Besov space and even the nonlinear greadient $V(\nabla u)$ obeys $1/2$-time differentiability in a Nikolskii space. The proof uses an assumption on the boundary condition of the noise coefficient $G$, i.e. for all $x \in \partial \mathcal{O}$,
\begin{align}\label{ass:boundary}
\sum_{j\in \mathbb{N}} \abs{g_j(x,0)}^2 = 0.
\end{align}

\begin{theorem}[\cite{wichmann2021temporal} Theorem~3.8 \& Theorem~3.11] \label{thm:time-reg}
Let the assumptions of Theorem \ref{thm:strong-sol} be satisfied. Additionally, assume  \eqref{ass:boundary}. Let $u$ be the strong solution to \eqref{eq:p-Laplace}. Then
\begin{subequations}
\begin{align}
u &\in L^2_\omega B^{1/2}_{\Phi_2,\infty} L^2_x, \\
V(\nabla u) &\in L^2_\omega B^{1/2}_{2,\infty} L^2_x,
\end{align}
\end{subequations}
where $\Phi_2(t) = e^{t^2}-1$.
\end{theorem}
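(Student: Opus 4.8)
The plan is to reduce both claims to quantitative bounds on the time increments $\tau_h u(t)=u(t+h)-u(t)$, using the strong formulation
\begin{align*}
\tau_h u(t) = \int_t^{t+h} \Div S(\nabla u)\ds + \int_t^{t+h} G(u)\dd W_s
\end{align*}
and splitting every estimate into a deterministic drift and a stochastic integral. The only inputs I would need are Theorem~\ref{thm:strong-sol}, which furnishes $\Div S(\nabla u)\in L^2_\omega L^2_t L^2_x$ and $u\in L^p_\omega L^\infty_t W^{1,p}_x$, the It\^o isometry \eqref{eq:ito-iso}, the growth bound \eqref{ass:growth}, and the $V$-coercivity of Lemma~\ref{lem:V-coercive}. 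The compatibility condition \eqref{ass:boundary} guarantees that $G(u)$ vanishes on $\partial\mathcal{O}$, so that the spatial integrations by parts below produce no boundary terms.

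For the statement $u\in L^2_\omega B^{1/2}_{\Phi_2,\infty}L^2_x$ I would estimate the two contributions separately. The drift satisfies $\norm{\int_t^{t+h}\Div S(\nabla u)\ds}_{L^2_x}\leq h^{1/2}(\int_t^{t+h}\norm{\Div S(\nabla u)}_{L^2_x}^2\ds)^{1/2}$ by Cauchy--Schwarz, hence is $1/2$-H\"older in time and lies in any Orlicz scale on the finite interval $I$. For the stochastic integral I would invoke the Burkholder--Davis--Gundy inequality to obtain, for every $k\in\mathbb{N}$,
\begin{align*}
\mathbb{E}\Bigl[\norm{\int_t^{t+h} G(u)\dd W_s}_{L^2_x}^{2k}\Bigr] \leq c_k\, \mathbb{E}\Bigl[\Bigl(\int_t^{t+h}\norm{G(u)}_{L_2(U;L^2_x)}^2\ds\Bigr)^{k}\Bigr]\leq c_k\, h^{k}\bigl(1+\sup_{s}\mathbb{E}\norm{u(s)}_{L^2_x}^{2k}\bigr),
\end{align*}
where the last step uses \eqref{ass:growth}. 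Since $c_k\sim (Ck)^{k}$, summing the resulting moment series against the Taylor coefficients of $\Phi_2(s)=e^{s^2}-1$ yields a uniform-in-$h$ bound on $h^{-1/2}\norm{\tau_h u}_{L^{\Phi_2}_t L^2_x}$, i.e. on the Nikolskii--Orlicz seminorm $\seminorm{u}_{B^{1/2}_{\Phi_2,\infty}L^2_x}$.

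For the statement $V(\nabla u)\in L^2_\omega B^{1/2}_{2,\infty}L^2_x$ the nonlinearity forces me to pass through $S$ and $V$-coercivity. By Lemma~\ref{lem:V-coercive},
\begin{align*}
\int_0^{T-h}\norm{V(\nabla u(t+h))-V(\nabla u(t))}_{L^2_x}^2\dt \eqsim \int_0^{T-h}\bigl(S(\nabla u(t+h))-S(\nabla u(t))\bigr):\bigl(\nabla u(t+h)-\nabla u(t)\bigr)\dxt,
\end{align*}
and after integrating by parts in space the right-hand side becomes $\int_0^{T-h}\langle\Div S(\nabla u(t))-\Div S(\nabla u(t+h)),\,\tau_h u(t)\rangle\dt$. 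Inserting the strong formulation for $\tau_h u(t)$ splits this into drift--drift terms, which Cauchy--Schwarz and Theorem~\ref{thm:strong-sol} bound by $c\,h\,\norm{\Div S(\nabla u)}_{L^2_tL^2_x}^2$, and stochastic cross terms. Pairing the $\mathcal{F}_t$-measurable factor $\Div S(\nabla u(t))$ with the future increment $\int_t^{t+h}G(u)\dd W_s$ gives zero in expectation by the martingale property, so everything reduces to controlling
\begin{align*}
\mathbb{E}\int_0^{T-h}\Bigl\langle\Div S(\nabla u(t+h)),\,\int_t^{t+h}G(u)\dd W_s\Bigr\rangle\dt.
\end{align*}

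The main obstacle is precisely this term, in which the $\mathcal{F}_{t+h}$-measurable factor $\Div S(\nabla u(t+h))$ is genuinely correlated with the noise increment over $[t,t+h]$: a crude Cauchy--Schwarz estimate combined with the It\^o isometry only produces the scaling $O(h^{1/2})$, whereas the Nikolskii seminorm requires the sharp $O(h)$. I would resolve this by not estimating the term in isolation but by applying It\^o's formula to $\tfrac12\norm{u(\cdot)-u(t)}_{L^2_x}^2$ with a frozen reference time $t$: the identity reproduces the coercive dissipation $\int_t^{t+h}\norm{V(\nabla u(r))-V(\nabla u(t))}_{L^2_x}^2\dr$ on the left, the martingale term again drops out in expectation, the It\^o correction contributes only $\tfrac12\mathbb{E}\int_t^{t+h}\norm{G(u)}_{L_2(U;L^2_x)}^2\dr\lesssim h$ via \eqref{ass:growth}, and the remaining drift contribution is $O(h)$ by Cauchy--Schwarz. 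This frozen-time device sidesteps the anticipating correlation entirely, at the price of producing an interval-averaged modulus; upgrading it to the supremum modulus of the Nikolskii space --- equivalently, transferring the sharp $O(h)$ scaling onto the genuine endpoint increment $V(\nabla u(t+h))-V(\nabla u(t))$ --- is the delicate point, and is where the full strength of $\Div S(\nabla u)\in L^2_tL^2_x$ and of the compatibility condition \eqref{ass:boundary} must be used. Taking $L^2_\omega$ of the resulting $h$-uniform bounds then closes both assertions.
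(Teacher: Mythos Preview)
The paper does not give a proof of this theorem at all: it is quoted verbatim as Theorems~3.8 and~3.11 of the companion paper \cite{wichmann2021temporal}, and the present paper only \emph{uses} the conclusions. So there is nothing in the paper to compare your argument against; what follows is an assessment of the proposal on its own merits.

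For the first assertion $u\in L^2_\omega B^{1/2}_{\Phi_2,\infty}L^2_x$ your moment argument has two gaps. First, the step $\mathbb{E}\bigl[\norm{\int_t^{t+h}G(u)\dd W_s}_{L^2_x}^{2k}\bigr]\leq c_k h^k(1+\sup_s\mathbb{E}\norm{u(s)}_{L^2_x}^{2k})$ requires \emph{all} moments of $u$ in~$\omega$, whereas the hypotheses of Theorem~\ref{thm:strong-sol} only provide $u_0\in L^{2+\varepsilon}_\omega L^2_x$ and correspondingly only $2+\varepsilon$ moments for $u$; the exponential series you want to sum does not even converge. Second, the Orlicz norm in $B^{1/2}_{\Phi_2,\infty}$ is taken in the \emph{time} variable, not in~$\omega$: controlling $\mathbb{E}\bigl[\int_I\Phi_2(\norm{\tau_h u(t)}_{L^2_x}/\lambda)\dt\bigr]$ (an expected modular) does not automatically bound $\mathbb{E}\bigl[\norm{\tau_h u}_{L^{\Phi_2}_t L^2_x}^2\bigr]$ (the second moment of the Luxemburg norm). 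The actual argument in \cite{wichmann2021temporal} has to work pathwise, exploiting conditional sub-Gaussianity of the martingale increment rather than raw $\omega$-moments.

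For the second assertion you have correctly isolated the real difficulty: the frozen-time It\^o formula yields the averaged modulus $\int_t^{t+h}\norm{V(\nabla u(r))-V(\nabla u(t))}_{L^2_x}^2\dd r\lesssim h$, while the Nikolskii seminorm demands the endpoint modulus $\int_0^{T-h}\norm{V(\nabla u(t+h))-V(\nabla u(t))}_{L^2_x}^2\dt\lesssim h$. You state that passing from one to the other ``is the delicate point'' and invoke \eqref{ass:boundary} and $\Div S(\nabla u)\in L^2_tL^2_x$ without saying how they help. This is precisely the nontrivial content of \cite{wichmann2021temporal}, and your sketch does not supply it; averaged Nikolskii control does not imply endpoint Nikolskii control in general, so an additional mechanism (an It\^o expansion of the increment of $V(\nabla u)$ itself, or a second testing with $\partial_t u$) is required.
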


Spatial regularity is closely connected to the existence of strong solutions. Local regularity has been obtained in \cite{Breit2015Regularity}.
\begin{theorem}[\cite{Breit2015Regularity}, Theorem 4]\label{thm:loc-space-reg}
Let $u_0 \in L^2_\omega W^{1,2}_x$ be $\mathcal{F}_0$-measurable. Let Assumption \ref{ass:Noise} be satisfied. Additionally, assume \eqref{ass:grad_growth} and denote by $u$ the strong solution of \eqref{eq:p-Laplace}. Then,
\begin{subequations}
\begin{align}
u &\in L^2_\omega L^\infty_t W^{1,2}_{x,\text{loc}},\\
V(\nabla u) &\in L^2_\omega L^2_t W^{1,2}_{x,\text{loc}}.
\end{align}
\end{subequations}
\end{theorem}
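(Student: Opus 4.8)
The plan is to prove both regularities simultaneously by the spatial difference-quotient method of Nirenberg, combined with It\^o's formula to accommodate the stochastic forcing. Fix an interior cutoff $\eta \in C^\infty_0(\mathcal{O})$ with $0 \leq \eta \leq 1$, fix a coordinate direction $e_k$, and for small $h>0$ write $D_h u(x) := u(x+h e_k) - u(x)$ for the undivided spatial difference (suppressing the index $k$). Shifting the strong form of~\eqref{eq:p-Laplace} by $h e_k$ and subtracting yields, $\mathbb{P}$-a.s. as an identity in $L^2_x$,
\begin{align*}
\dd (D_h u) - D_h\big(\Div S(\nabla u)\big) \dt = D_h\big(G(u)\big) \dd W .
\end{align*}
I would then apply It\^o's formula to $\tfrac12 \norm{\eta D_h u}_{L^2_x}^2$. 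Since pairing against $\eta\, \dd(D_h u)$ effectively tests with $\eta^2 D_h u$, this produces a drift term $\big(\eta^2 D_h u,\, D_h \Div S(\nabla u)\big)$, a martingale term, and the It\^o correction $\tfrac12 \norm{\eta D_h G(u)}_{L_2(U;L^2_x)}^2$. Integrating the drift by parts in space and using $\nabla D_h u = D_h \nabla u$ splits it into the coercive contribution $-\big(\eta^2 D_h\nabla u,\, D_h S(\nabla u)\big)$ plus a commutator $-\big(2\eta\,\nabla\eta \otimes D_h u,\, D_h S(\nabla u)\big)$.

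Next I would exploit the nonlinear structure. By $V$-coercivity (Lemma~\ref{lem:V-coercive}) the coercive contribution satisfies $-\big(\eta^2 D_h\nabla u,\, D_h S(\nabla u)\big) \eqsim -\int_{\mathcal{O}} \eta^2 \abs{D_h V(\nabla u)}^2 \dx$, the good sink term. The commutator is estimated by the one-sided Young inequality (Lemma~\ref{lem:1side-young}) with a small parameter $\delta$; the $\delta\,\abs{D_h V(\nabla u)}^2$ part is absorbed into the sink, while the remaining weighted term $c_\delta\,(\kappa + \abs{\nabla u(\cdot+he_k)} + \abs{D_h\nabla u})^{p-2}\abs{\nabla\eta}^2\abs{D_h u}^2$ is controlled using the a priori bound~\eqref{eq:est-strong} from Theorem~\ref{thm:strong-sol}. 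For the It\^o correction I would write $D_h G(u)(x) = \sum_j\big[g_j(x+he_k,u(x+he_k)) - g_j(x,u(x))\big](u_j,\cdot)_U$ and split the bracket into an explicit-$x$ difference, bounded by $h^2(1+\abs{u(\cdot+he_k)}^2)$ via the first part of~\eqref{ass:grad_growth}, and a $u$-difference, bounded by $\abs{D_h u}^2$ via the Lipschitz bound~\eqref{ass:Lipschitz}; hence $\norm{\eta D_h G(u)}_{L_2}^2 \lesssim h^2\big(1+\norm{u}_{L^2_x}^2\big) + \norm{\eta D_h u}_{L^2_x}^2$. Taking the supremum in time, then expectation, applying the Burkholder--Davis--Gundy inequality to the martingale term (absorbing part of it on the left), and invoking Gronwall's lemma in time gives a bound on $\mathbb{E}\big[\sup_{t}\norm{\eta D_h u(t)}_{L^2_x}^2\big] + \mathbb{E}\big[\int_0^T\!\int_{\mathcal{O}}\eta^2\abs{D_h V(\nabla u)}^2 \dx\, \dt\big]$ in terms of $\mathbb{E}\norm{\eta D_h u_0}_{L^2_x}^2$, the $h^2$ remainder, and the weighted commutator contribution.

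Finally I would divide by $h^2$. Using $\norm{D_h u_0}_{L^2(\operatorname{supp}\eta)}^2 \lesssim h^2\norm{\nabla u_0}_{L^2_x}^2$ (valid since $u_0 \in W^{1,2}_x$) and the corresponding $h^{-2}$-control of the remaining terms, the quantity $h^{-2}\big(\mathbb{E}[\sup_t\norm{\eta D_h u}_{L^2_x}^2] + \mathbb{E}[\int_0^T\!\int_{\mathcal{O}}\eta^2\abs{D_h V(\nabla u)}^2]\big)$ is bounded uniformly in $h$ and in $k$. By the Nirenberg difference-quotient characterization of weak differentiability, applied in the Bochner sense, these uniform bounds pass to the limit $h\to 0$ and identify $\eta\,\partial_k u \in L^2_\omega L^\infty_t L^2_x$ and $\eta\,\partial_k V(\nabla u) \in L^2_\omega L^2_t L^2_x$. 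As $\eta$ ranges over interior cutoffs and $k$ over all directions, this yields $u \in L^2_\omega L^\infty_t W^{1,2}_{x,\text{loc}}$ and $V(\nabla u) \in L^2_\omega L^2_t W^{1,2}_{x,\text{loc}}$.

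I expect two steps to be the main obstacles. First, the rigorous justification of It\^o's formula for the difference-quotient functional: strictly one should carry out the computation at the level of a Galerkin or regularized approximation, where the classical finite-dimensional It\^o formula applies, and then pass to the limit using exactly the uniform-in-$h$ bounds above (alternatively, invoke an infinite-dimensional It\^o formula valid under the integrability $\Div S(\nabla u)\in L^2_\omega L^2_t L^2_x$). Second, and more essentially, the degenerate/singular weighted commutator term carrying $(\kappa+\abs{\nabla u(\cdot+he_k)}+\abs{D_h\nabla u})^{p-2}$ is delicate to control uniformly in $h$; it is precisely the presence of $\nabla\eta$ here that forces the result to be only \emph{interior}, and absorbing it requires combining the $p$-integrability of $\nabla u$ from~\eqref{eq:est-strong} with the $V$-coercivity sink.
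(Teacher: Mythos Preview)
The paper does not give its own proof of this theorem: it is stated as a citation of \cite{Breit2015Regularity}, Theorem~4, and no argument is supplied. So there is nothing in the present paper to compare your proposal against.

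That said, your outline is essentially the correct one and matches the strategy of the cited reference: localize with a cutoff, take spatial difference quotients of the strong equation, apply It\^o's formula to $\tfrac12\norm{\eta D_h u}_{L^2_x}^2$ (justified via Galerkin approximation), extract the coercive term $\int \eta^2\abs{D_h V(\nabla u)}^2$ via Lemma~\ref{lem:V-coercive}, absorb the commutator using Lemma~\ref{lem:1side-young}, bound the It\^o correction through~\eqref{ass:grad_growth} and~\eqref{ass:Lipschitz}, close with BDG and Gronwall, and finally pass to the limit $h\to 0$ by the difference-quotient characterization of Sobolev regularity. One small point worth making explicit: when you absorb the commutator, the factor~$\eta$ already present in $2\eta\nabla\eta\,D_h u$ is what allows the $\delta$-term to carry $\eta^2$ and hence be absorbed into the localized sink; as written your sentence suggests absorbing an unlocalized $\delta\abs{D_h V(\nabla u)}^2$, which would not work. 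The two obstacles you flag (rigorous It\^o at the approximate level, and uniform-in-$h$ control of the weighted commutator via the $L^p$ a~priori bounds on $\nabla u$) are exactly the right ones.
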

 
On sufficiently regular domains, it is possible to relate the divergence of the nonlinear operator $S$ to the full gradient as presented in \cite{Balci2021,MR4030249}, i.e.
\begin{align}
\norm{\Div S(\nabla u)}_{L^2_\omega L^2_t L^2_x} \eqsim \norm{\nabla S(\nabla u)}_{L^2_\omega L^2_t L^2_x}.
\end{align}
Precisely, given some bounded Lipschitz domain $\mathcal{O}$ such that $\partial \mathcal{O} \in W^{2,1}$, i.e. $\mathcal{O}$ is locally the subgraph of a Lipschitz continuous function of $n-1$ variables, which is also twice weakly differentiable. Denote by $\mathcal{B}$ the second fundamental form on $\partial \mathcal{O}$, by $\abs{B}$ its norm and define 
\begin{align}
\mathcal{K}_\mathcal{O}(r) := \sup_{E \subset \partial \mathcal{O} \cap B_r(x), x \in \partial \mathcal{O}} \frac{\int_E \abs{\mathcal{B}}\dd \mathcal{H}^{n-1}}{\text{cap}_{B_1(x)}(E)},
\end{align}
where $B_r(x)$ denotes the ball of radius $r$ around $x$, $\text{cap}_{B_1(x)}(E)$ is the capacity of the set $E$ relative to the ball $B_1(x)$ and $\mathcal{H}^{n-1}$ is the $n-1$ dimensional Hausdorff measure. 

\begin{lemma}[\cite{MR4030249}, Theorem 2.1] \label{lem:grad-S}
Assume that $\mathcal{O}$ is either 
\begin{enumerate}
\item bounded and convex,
\item or bounded, Lipschitz and $\partial \mathcal{O} \in W^{2,1}$ with $\lim_{r\to 0} \mathcal{K}_\mathcal{O}(r) \leq c$.
\end{enumerate}
Let $v \in L^p_\omega L^p_t W^{1,p}_{0,x}$ with $\Div S(\nabla v) \in L^2_\omega L^2_t L^2_x$. Then $\nabla S(\nabla v) \in L^2_\omega L^2_t L^2_x$ and
\begin{align*}
\mathbb{E} \left[ \norm{\nabla S(\nabla v)}_{L^2_t L^2_x}^2 \right] \eqsim \mathbb{E} \left[ \norm{\Div S(\nabla u)}_{L^2_t L^2_x}^2 \right].
\end{align*}
\end{lemma}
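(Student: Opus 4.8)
The plan is to prove the two inequalities in the equivalence separately, reducing the nontrivial one to the deterministic spatial estimate by a slicing argument. Throughout I write $w := S(\nabla v)$, an $\R^{n\times N}$-valued field, and treat $\omega$ and $t$ as parameters.

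The bound $\mathbb{E}[\norm{\Div S(\nabla v)}_{L^2_t L^2_x}^2] \lesssim \mathbb{E}[\norm{\nabla S(\nabla v)}_{L^2_t L^2_x}^2]$ is immediate: pointwise in $(\omega,t,x)$ the divergence is a partial trace of the full gradient, so $\abs{\Div w} \leq \sqrt{n}\,\abs{\nabla w}$; squaring and integrating in $x$, then in $t$ and $\omega$, gives the claim. For the reverse inequality I would argue by Fubini. From the hypothesis $\Div S(\nabla v) \in L^2_\omega L^2_t L^2_x$ and Tonelli's theorem, for $\mathbb{P}$-a.s.\ $\omega$ and a.e.\ $t\in I$ the slice $v(\omega,t,\cdot)$ lies in $W^{1,p}_0(\mathcal{O})$ and satisfies $\Div S(\nabla v(\omega,t,\cdot)) \in L^2(\mathcal{O})$. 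On each such slice I apply the deterministic estimate \cite{MR4030249}, Theorem 2.1, whose geometric hypotheses are exactly the two alternatives on $\mathcal{O}$ assumed here; this yields $\norm{\nabla S(\nabla v(\omega,t,\cdot))}_{L^2(\mathcal{O})} \lesssim \norm{\Div S(\nabla v(\omega,t,\cdot))}_{L^2(\mathcal{O})}$ with a constant depending only on $p$ and the geometry of $\mathcal{O}$, hence independent of $(\omega,t)$. Squaring, integrating in $t$ over $I$, and taking expectation then produces the desired estimate. The only bookkeeping point is the measurability of $(\omega,t)\mapsto \norm{\nabla S(\nabla v)}_{L^2_x}$, which follows from the progressive measurability of $v$ and the continuity of $S$.

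The heart of the matter, and the main obstacle, is the deterministic estimate on a single slice. I expect to prove it by the Reilly/Ne\v{c}as integration-by-parts identity: after an approximation step (smooth functions on smooth domains exhausting $\mathcal{O}$, since $w$ is not regular enough to integrate by parts directly), one expands $\int_\mathcal{O} \abs{\nabla w}^2 \dx$ column by column and arrives at $\int_\mathcal{O}\abs{\nabla w}^2\dx = \int_\mathcal{O} \abs{\Div w}^2\dx + \mathcal{R}$, where the remainder $\mathcal{R}$ is a boundary integral governed by the second fundamental form $\mathcal{B}$ of $\partial\mathcal{O}$. In the convex case $\mathcal{R}$ has a favorable sign and may simply be dropped; in the $W^{2,1}$ case one estimates $\mathcal{R}$ by a trace and interpolation inequality and absorbs it into the left-hand side, the condition $\lim_{r\to 0}\mathcal{K}_\mathcal{O}(r)\leq c$ being precisely what makes the absorbed constant admissible.

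Passing back through the approximation, and using the $p$-growth structure of $S$ to guarantee stability of all norms involved, completes the argument. The genuine subtlety is that $w = S(\nabla v)$ degenerates on the set $\set{\nabla v = 0}$, so the chain-rule expression for $\nabla w$ and the approximating sequence must be controlled by the monotonicity and comparison tools of Lemma~\ref{lem:V-coercive} rather than by naive differentiation; this is what ties the second-order spatial regularity of $S(\nabla v)$ to the admissible right-hand side $\Div S(\nabla v)$.
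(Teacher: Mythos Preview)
The paper does not give its own proof of this lemma; it is stated as a direct citation of \cite{MR4030249}, Theorem~2.1, and no argument follows the statement. Your proposal is therefore not competing with an in-paper proof but rather supplying one where the authors simply invoke the literature.

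Your approach is correct. The passage from the deterministic spatial estimate of \cite{MR4030249} to the stochastic, time-dependent statement is indeed nothing more than the Fubini/slicing argument you describe: the constant in the deterministic bound depends only on $p$ and the geometry of $\mathcal{O}$, so it is uniform in $(\omega,t)$ and one may square and integrate. Your sketch of the underlying deterministic result --- the Reilly/Ne\v{c}as identity producing a boundary remainder governed by $\mathcal{B}$, with the convex case giving a sign and the $W^{2,1}$ case requiring absorption via the capacity condition on $\mathcal{K}_\mathcal{O}$ --- accurately summarizes the strategy of \cite{MR4030249} (and \cite{Balci2021}). The only caution is that the approximation step in the deterministic proof is genuinely delicate and is the technical heart of the cited papers; your outline is right in spirit but the details (constructing smooth approximants that respect the nonlinear constraint $w = S(\nabla v)$ and the zero boundary data, and passing to the limit in the boundary term) occupy most of the work there.
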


In the non-degenerate setting, $\kappa > 0$, this allows to deduce global spatial regularity.
\begin{corollary}[\cite{wichmann2021temporal}, Corollary~2.14] \label{cor:global-space-reg}
Let the assumptions of Theorem \ref{thm:strong-sol} be satisfied and $\kappa > 0$. Let $u$ be the strong solution to \eqref{eq:p-Laplace}. Then,
\begin{align}
V(\nabla u) &\in L^2_\omega L^2_t W^{1,2}_{x}.
\end{align}
\end{corollary}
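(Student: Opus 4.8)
The plan is to upgrade the local statement of Theorem~\ref{thm:loc-space-reg} to a global one by exploiting that, in the non-degenerate regime $\kappa>0$ together with $p\geq 2$ (which is forced by the hypotheses of Theorem~\ref{thm:strong-sol}), the derivative of $V$ is pointwise controlled by the derivative of $S$, for which a global $L^2$-bound is already available through Lemma~\ref{lem:grad-S}. Concretely, I would establish the chain
\[
\Div S(\nabla u)\in L^2_\omega L^2_t L^2_x \ \Rightarrow\ \nabla S(\nabla u)\in L^2_\omega L^2_t L^2_x \ \Rightarrow\ \nabla V(\nabla u)\in L^2_\omega L^2_t L^2_x .
\]

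First I would collect the input from Theorem~\ref{thm:strong-sol}. Its assumptions in particular force $p\geq 2$ and $\mathcal{O}$ bounded and convex, and the estimate~\eqref{eq:est-strong} gives both $\Div S(\nabla u)\in L^2_\omega L^2_t L^2_x$ and $u\in L^p_\omega L^\infty_t W^{1,p}_x$. Since $\mathcal{O}$ is bounded and convex, alternative (1) of Lemma~\ref{lem:grad-S} applies with $v=u$ and yields the global bound $\nabla S(\nabla u)\in L^2_\omega L^2_t L^2_x$ with $\mathbb{E}\big[\norm{\nabla S(\nabla u)}_{L^2_t L^2_x}^2\big]\eqsim \mathbb{E}\big[\norm{\Div S(\nabla u)}_{L^2_t L^2_x}^2\big]<\infty$.

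The core step is a pointwise comparison between $V$ and $S$ from~\eqref{eq:S}--\eqref{eq:V}. Both maps are radial and share eigendirections: writing $DS(\xi)$ and $DV(\xi)$ in the form $a(\abs{\xi})\,\mathrm{Id}+b(\abs{\xi})\,\xi\otimes\xi/\abs{\xi}^2$, one checks that they are simultaneously diagonalizable, symmetric and positive definite for $\kappa>0$, $p>1$, with eigenvalues comparable to $(\kappa+\abs{\xi})^{p-2}$ for $S$ and to $(\kappa+\abs{\xi})^{(p-2)/2}$ for $V$. Hence the ratios of corresponding eigenvalues of $DV(\xi)$ and $DS(\xi)$ are all comparable to $(\kappa+\abs{\xi})^{-(p-2)/2}$, which for $p\geq 2$ is bounded by $\kappa^{-(p-2)/2}$, so that $\abs{DV(\xi)H}\leq c(p)\,\kappa^{-(p-2)/2}\,\abs{DS(\xi)H}$ for all $\xi,H\in\R^{n\times N}$. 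Since for $\kappa>0$ the radial part $t\mapsto(\kappa+t)^{p-2}t$ is strictly increasing, $S$ is a $C^1$-diffeomorphism of $\R^{n\times N}$ with everywhere invertible $DS$, and the previous inequality is exactly the statement that $\Psi:=V\circ S^{-1}$ is globally Lipschitz with constant $\lesssim\kappa^{-(p-2)/2}$, because $D\Psi=(DV)(DS)^{-1}\circ S^{-1}$.

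Finally I would invoke the chain rule for compositions of Sobolev maps with Lipschitz functions. For a.e.\ $(\omega,t)$ the map $S(\nabla u)$ is locally integrable (indeed $\abs{S(\nabla u)}\lesssim(\kappa+\abs{\nabla u})^{p-1}$ with $\nabla u\in L^p_x$) and has weak spatial gradient in $L^2_x$ by the second step, so $S(\nabla u)\in W^{1,2}_{x,\mathrm{loc}}$; since $V(\nabla u)=\Psi(S(\nabla u))$ with $\Psi$ Lipschitz, the chain rule gives $\nabla V(\nabla u)=D\Psi(S(\nabla u))\,\nabla S(\nabla u)$ a.e., whence $\abs{\nabla V(\nabla u)}\leq\mathrm{Lip}(\Psi)\,\abs{\nabla S(\nabla u)}$, which is globally in $L^2$ by the second step. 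The lower-order term $V(\nabla u)\in L^2_x$ follows directly from $\abs{V(\nabla u)}^2\lesssim(\kappa+\abs{\nabla u})^p$ and $u\in W^{1,p}_x$. Taking expectations and integrating in time then yields $V(\nabla u)\in L^2_\omega L^2_t W^{1,2}_x$. The only genuinely computational point is the eigenvalue comparison, and the sole place where both hypotheses $\kappa>0$ and $p\geq 2$ are essential is the bound $(\kappa+\abs{\xi})^{-(p-2)/2}\leq\kappa^{-(p-2)/2}$; as $\kappa\to 0$ or $p<2$ the constant in the comparison degenerates.
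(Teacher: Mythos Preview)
The paper does not give its own proof of this corollary; it merely cites it from \cite{wichmann2021temporal} and indicates the route in the sentence preceding the statement, namely that Lemma~\ref{lem:grad-S} ``allows to deduce global spatial regularity'' in the non-degenerate case. Your argument follows exactly this route and is correct: you obtain $\nabla S(\nabla u)\in L^2_\omega L^2_t L^2_x$ from Lemma~\ref{lem:grad-S} and the strong-solution estimate~\eqref{eq:est-strong}, and then transfer this to $\nabla V(\nabla u)$ via the pointwise bound $\abs{DV(\xi)\,DS(\xi)^{-1}}\lesssim (\kappa+\abs{\xi})^{-(p-2)/2}\leq \kappa^{-(p-2)/2}$, which is the precise place where both $\kappa>0$ and $p\geq 2$ enter.

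Two minor remarks. First, your opening sentence about upgrading Theorem~\ref{thm:loc-space-reg} is a slight misdirection: you never actually use that local result, and the argument stands on the $\Div S\to\nabla S\to\nabla V$ chain alone. Second, the chain-rule step is clean here because for $\kappa>0$ the map $\Psi=V\circ S^{-1}$ is genuinely $C^1$ (both $S$ and $V$ are $C^1$ with $DS$ everywhere invertible), so no issues with vector-valued Lipschitz compositions arise.
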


\section{Numerical scheme for the averaged system} \label{sec:Numerics}
In this section we will first present the discrete structures. Afterwards we construct two algorithms that approximate the solution to \eqref{eq:p-Laplace}. Finally, we prove convergence of the approximation towards the analytic solution with optimal rates.
\subsection{Space discretization}
Let $\mathcal{O} \subset \R^n$ be a bounded polyhedral domain. By
$\mathcal{T}_h$ denote a regular partition (triangulation)
of~$\mathcal{O}$ (no hanging nodes), which consists of closed $n$-simplices called
\emph{elements}. For each element ($n$-simplex) $T\in \mathcal{T}_h$
we denote by $h_T$ the diameter of $T$, and by $\rho_T$ the supremum
of the diameters of inscribed balls. 

We assume that $\mathcal{T}_h$ is \emph{shape regular}, that is there exists
a constant~$\gamma$ (the shape regularity constant) such that
\begin{align}
  \label{eq:nondeg}
  \max_{T \in \mathcal{T}_h} \frac{h_T}{\rho_T} \leq \gamma.
\end{align}
We define the maximal mesh-size by
\begin{align*}
  h &:= \max_{T \in \mathcal{T}_h} h_T.
\end{align*}
We assume further that our triangulation is \emph{quasi-uniform}, i.e.
\begin{align}
  \label{eq:quasi-uniform}
  h_T \eqsim h \qquad \text{for all $T \in \mathcal{T}_h$}.
\end{align}
%
%

For $\ell\in \setN _0$ we denote by $\mathscr{P}_\ell(\mathcal{O})$ the
polynomials on $\mathcal{O}$ of degree less than or equal to
$\ell$. 

For fixed $r \in \setN$ we define the vector valued finite element space $V_h$
as
\begin{align}\label{def:Vh}
  \begin{aligned}
    V_h &:= \set{v \in W^{1,1}_{0,x} \,:\, v|_T \in \mathscr{P}_r(T)
      \,\,\forall T\in \mathcal{T}_h}.
  \end{aligned}
\end{align}    

 Moreover, let $\Pi_2: L^2_x \to V_h$ be the $L^2_x$-orthogonal projection defined by 
\begin{align*}
\forall \xi_h \in V_h: \quad \left( \Pi_2 v, \xi_h \right) = \left(v,\xi_h \right)
\end{align*}
or equivalently
\begin{align*}
\Pi_2 v = \argmin_{v_h \in V_h} \norm{v-v_h}_{L^2_x}.
\end{align*}
We will need some classical results on the stability properties of the $L^2_x$-projection for finite elements.
\begin{lemma}\label{lem:l2-stab}
Let $r \geq 1$, $V_h$ be given by \eqref{def:Vh} and $\mathcal{T}_h$ be quasi-uniform. Then 
\begin{alignat*}{2}
&\forall v \in W^{1,2}_x: \quad \norm{v-\Pi_2v}_{L^2_x} + h\norm{\nabla(v-\Pi_2 v)}_{L^2_x} &&\lesssim h\norm{\nabla v}_{L^2_x},\\
&\forall v \in W^{2,2}_x: \quad \norm{v-\Pi_2v}_{L^2_x} + h\norm{\nabla(v-\Pi_2 v)}_{L^2_x} &&\lesssim h^2\norm{\nabla^2 v}_{L^2_x}.
\end{alignat*} 
\end{lemma}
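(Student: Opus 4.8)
The plan is to establish both estimates in Lemma~\ref{lem:l2-stab} by combining the best-approximation property of the $L^2_x$-projection with standard interpolation and inverse estimates for the finite element space $V_h$. The $L^2_x$-orthogonal projection $\Pi_2$ is characterized by minimizing $\norm{v - v_h}_{L^2_x}$ over $v_h \in V_h$, so the first observation is that for \emph{any} comparison element $\Pi v \in V_h$ one has $\norm{v - \Pi_2 v}_{L^2_x} \leq \norm{v - \Pi v}_{L^2_x}$. The natural choice for $\Pi v$ is a quasi-interpolation operator of Scott--Zhang type that is well defined on $W^{1,1}_{0,x}$, preserves the zero boundary values, and satisfies the local approximation bounds $\norm{v - \Pi v}_{L^2(T)} \lesssim h_T \norm{\nabla v}_{L^2(\omega_T)}$ for $v \in W^{1,2}_x$ and $\norm{v - \Pi v}_{L^2(T)} \lesssim h_T^2 \norm{\nabla^2 v}_{L^2(\omega_T)}$ for $v \in W^{2,2}_x$, where $\omega_T$ is the patch of elements surrounding $T$. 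Summing these local bounds over $T \in \mathcal{T}_h$, using quasi-uniformity \eqref{eq:quasi-uniform} to replace each $h_T$ by $h$ and shape regularity \eqref{eq:nondeg} to control the finite overlap of the patches, gives the $L^2_x$-part of both estimates.

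For the gradient term $h \norm{\nabla(v - \Pi_2 v)}_{L^2_x}$, I would first insert the interpolant and split $\nabla(v - \Pi_2 v) = \nabla(v - \Pi v) + \nabla(\Pi v - \Pi_2 v)$ by the triangle inequality. The first piece is handled directly by the gradient estimates of the Scott--Zhang interpolant, namely $\norm{\nabla(v - \Pi v)}_{L^2_x} \lesssim \norm{\nabla v}_{L^2_x}$ in the $W^{1,2}_x$ case and $\lesssim h \norm{\nabla^2 v}_{L^2_x}$ in the $W^{2,2}_x$ case. The second piece is a difference of two functions in $V_h$, so I apply the inverse inequality $\norm{\nabla w_h}_{L^2_x} \lesssim h^{-1} \norm{w_h}_{L^2_x}$, valid for quasi-uniform meshes, to get $\norm{\nabla(\Pi v - \Pi_2 v)}_{L^2_x} \lesssim h^{-1} \norm{\Pi v - \Pi_2 v}_{L^2_x}$, and then bound $\norm{\Pi v - \Pi_2 v}_{L^2_x} \leq \norm{v - \Pi v}_{L^2_x} + \norm{v - \Pi_2 v}_{L^2_x} \lesssim \norm{v - \Pi v}_{L^2_x}$ using the $L^2_x$ bounds already obtained. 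Multiplying through by $h$ then produces exactly the gradient contributions $h \norm{\nabla v}_{L^2_x}$ and $h^2 \norm{\nabla^2 v}_{L^2_x}$, completing both lines.

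The main obstacle, and the reason a naive Lagrange interpolant does not suffice, is that the statement is phrased for $v \in W^{1,2}_x$ (respectively $W^{2,2}_x$) \emph{without} extra smoothness, so nodal point evaluation need not be defined; this is precisely why the averaging Scott--Zhang operator, which requires only $W^{1,1}$-regularity and replicates the homogeneous boundary condition, is the correct device. The other point requiring care is that the inverse inequality genuinely uses quasi-uniformity \eqref{eq:quasi-uniform}, not merely shape regularity, since one needs a \emph{global} factor $h^{-1}$ rather than element-wise factors $h_T^{-1}$ when bounding $\nabla(\Pi v - \Pi_2 v)$ against $\norm{\Pi v - \Pi_2 v}_{L^2_x}$; this is where the hypothesis that $\mathcal{T}_h$ is quasi-uniform is consumed. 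Both estimates are then entirely standard, and the argument is identical in structure for the two regularity classes, differing only in the power of $h$ carried by the interpolation bounds.
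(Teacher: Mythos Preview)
Your argument is correct and is the standard proof of these classical $L^2$-projection stability estimates. Note, however, that the paper does not actually supply a proof of this lemma: it is introduced with the phrase ``We will need some classical results on the stability properties of the $L^2_x$-projection for finite elements'' and then stated without proof, as is customary for such well-known finite element facts. Your approach via the Scott--Zhang quasi-interpolant, the best-approximation property of $\Pi_2$, and the inverse inequality on the quasi-uniform mesh is precisely the textbook route one would take to verify the result, and your remarks about why Scott--Zhang (rather than nodal Lagrange interpolation) is needed and where quasi-uniformity is consumed are accurate.
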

Due to the nonlinear structure of the $p$-Laplace system we also need an adapted stability result.
\begin{proposition}[\cite{MR4286257}, Theorem 7]\label{prop:non-stability}
Let $r \geq 1$, $V_h$ be given by \eqref{def:Vh} and $\mathcal{T}_h$ be quasi-uniform. Then
\begin{align*}
\norm{V(\nabla v) - V(\nabla \Pi_2 v)}_{L^2_x} \lesssim h \norm{\nabla V(\nabla v)}_{L^2_x}.
\end{align*}
\end{proposition}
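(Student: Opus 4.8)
The plan is to transfer the nonlinear projection error into a \emph{linear} approximation statement for the single quantity $V(\nabla v)$, using the theory of shifted $N$-functions associated to $\varphi$. For $a \geq 0$ introduce the shifted $N$-function $\varphi_a(t) := \int_0^t \varphi'(a+s)\frac{s}{a+s}\dd s$. The starting point is Lemma~\ref{lem:V-coercive}, which gives pointwise
\begin{align*}
\abs{V(\nabla v) - V(\nabla \Pi_2 v)}^2 \eqsim \varphi_{\abs{\nabla v}}\big(\abs{\nabla v - \nabla \Pi_2 v}\big),
\end{align*}
so that it suffices to bound $\int_{\mathcal{O}} \varphi_{\abs{\nabla v}}(\abs{\nabla(v - \Pi_2 v)})\dx$ by $h^2 \int_{\mathcal{O}} \abs{\nabla V(\nabla v)}^2 \dx$. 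I would record at the outset the two structural facts I lean on: the shift-change estimate $\varphi_a(t) \eqsim \varphi_b(t)$ whenever $a \eqsim b$ (with the loss controlled by $\abs{a-b}$), which lets me treat the modular as a quasi-metric and freely move the shift between $\nabla v$, $\nabla \Pi_2 v$ and local averages; and the equivalence $\varphi_a''(t) \eqsim (\kappa + a + t)^{p-2}$, which ties $\varphi$ to $V$ through the formal identity $\abs{\nabla V(\nabla v)}^2 \eqsim (\kappa + \abs{\nabla v})^{p-2}\abs{\nabla^2 v}^2$.

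Next I would localize and insert a good interpolant. Let $\PiSZ\colon W^{1,1}_{0,x}\to V_h$ be a Scott--Zhang type quasi-interpolation, locally $L^1$-stable and reproducing affine functions on each element (possible since $r\geq 1$). Using shift-change and the triangle inequality in the modular, I split
\begin{align*}
\int_{\mathcal{O}} \varphi_{\abs{\nabla v}}(\abs{\nabla(v-\Pi_2 v)})\dx \lesssim \int_{\mathcal{O}} \varphi_{\abs{\nabla v}}(\abs{\nabla(v-\PiSZ v)})\dx + \int_{\mathcal{O}} \varphi_{\abs{\nabla v}}(\abs{\nabla(\PiSZ v-\Pi_2 v)})\dx.
\end{align*}
For the first (interpolation) term I would prove the element-wise shifted approximation estimate: on each simplex $T$, after subtracting an affine function $q$ with $\nabla q = \langle \nabla v\rangle_{S_T}$ and using that $\PiSZ$ reproduces it,
\begin{align*}
\int_T \varphi_{\abs{\nabla v}}(\abs{\nabla(v-\PiSZ v)})\dx \lesssim \int_{S_T} \varphi_{\abs{\nabla v}}(\abs{\nabla v - \langle \nabla v\rangle_{S_T}})\dx \lesssim h^2 \int_{S_T} \abs{\nabla V(\nabla v)}^2 \dx,
\end{align*}
where $S_T$ is the local patch and the last step is a Poincaré inequality for the shifted $N$-function (the oscillation of $\nabla v$ measured in $\varphi_{\abs{\nabla v}}$ is controlled by $h^2$ times the $L^2$-mass of $\nabla V(\nabla v)$). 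Summing over $T$ and using finite overlap of the patches gives the bound for this term.

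For the remaining \emph{discrete} term I would use that $\PiSZ v - \Pi_2 v \in V_h$ together with the best-approximation property and the $W^{1,2}$-stability of $\Pi_2$ from Lemma~\ref{lem:l2-stab}: writing the discrete error as $(\PiSZ v - v) - (\Pi_2 v - v)$, a local inverse estimate converts the $L^2$ bound $\norm{\PiSZ v - \Pi_2 v}_{L^2(T)} \lesssim \norm{v - \PiSZ v}_{L^2(S_T)} + \norm{v - \Pi_2 v}_{L^2(S_T)}$ into a gradient bound with an extra factor $h^{-1}$, and shift-change then reabsorbs this into the interpolation-type modular already estimated. The main obstacle is precisely this last step: because $\Pi_2$ is defined through the $L^2$ inner product and not through the nonlinear energy, it is only a best approximation in $L^2$, so controlling its error in the $\varphi_{\abs{\nabla v}}$-modular requires genuine use of quasi-uniformity, the inverse estimate, and careful bookkeeping of how the shift degrades across neighbouring elements where $\abs{\nabla v}$ varies. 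Establishing this near-best-approximation, equivalently the stability of $\Pi_2$ in the shifted Orlicz--Sobolev sense, is the technical heart of the argument; once it is in place, collecting the two terms yields $\norm{V(\nabla v) - V(\nabla \Pi_2 v)}_{L^2_x} \lesssim h\norm{\nabla V(\nabla v)}_{L^2_x}$.
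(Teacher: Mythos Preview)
The paper does not supply its own proof of this proposition: it is quoted verbatim from \cite{MR4286257}, Theorem~7, and used as a black box. So there is nothing to compare your argument against within the present manuscript.

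That said, your outline is the standard route to such a result and is in the spirit of the shifted $N$-function machinery underlying \cite{DR,DieE08,BelDieKre2012,MR4286257}: rewrite the $V$-distance as a shifted modular via Lemma~\ref{lem:V-coercive}, insert a Scott--Zhang interpolant, bound the interpolation part by a shifted Poincar\'e estimate on patches, and then argue that the discrete remainder $\PiSZ v - \Pi_2 v$ is no worse. You correctly flag the genuine difficulty: the last step needs a near-best-approximation property of $\Pi_2$ in the \emph{$x$-dependent} modular $\varphi_{\abs{\nabla v(x)}}$, and your sketch (inverse estimate plus $L^2$-stability plus shift-change across neighbouring elements) is the right mechanism but not yet a proof. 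In the references this is handled by first freezing the shift at a local average $\langle \abs{\nabla v}\rangle_{S_T}$, applying an inverse estimate in the resulting \emph{uniform} Orlicz norm on each element, and then paying for the shift-change by an oscillation term that is again controlled by $h^2\norm{\nabla V(\nabla v)}_{L^2(S_T)}^2$; quasi-uniformity enters to make the local $L^2$-stability of $\Pi_2$ (Lemma~\ref{lem:l2-stab}) usable patch-wise. If you want a self-contained proof, that is the step to write out carefully.
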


\subsection{Time discretization}
Let $\{0=t_0<\cdots<t_M=T\}$ be a uniform partition of $[0,T]$ with
mesh size $\tau =T/M$. For $m \geq 1$ define $I_m := [t_{m-1},t_m]$. By $\abs{I_m}$ we denote the Lebesgue measure of~$I_m$. By $\dashint_{I_m} g\ds$ we denote the mean value
integral over the set~$I_m$. We also abbreviate $\mean{g}_m = \dashint_{I_m} g\ds$ for the mean value. 

Let us discuss the stability properties of the piecewise constant approximation process generated by the mean values in terms of Nikolskii spaces.
\begin{lemma}\label{lem:approximation quality}
Let $\alpha \in (0,1)$ and $r \in [1,\infty)$. Additionally, assume $u \in L^2_\omega B^{\alpha}_{r,\infty} L^2_x$. Then
\begin{align}\label{eq:approximation quality}
\left( \mathbb{E} \left[ \left( \sum_{m=1}^M \int_{I_m} \norm{u - \mean{u}_m}_{L^2_x}^r \ds \right)^\frac{2}{r} \right] \right)^\frac{1}{2} \lesssim \tau^{\alpha} \left( \mathbb{E}\left[ \seminorm{u}_{B^{\alpha}_{r,\infty} L^2_x}^2 \right] \right)^\frac{1}{2}.
\end{align}
\end{lemma}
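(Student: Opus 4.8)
The goal is to control the piecewise-constant mean-value approximation error by the Nikolskii seminorm, gaining the expected rate $\tau^\alpha$. The plan is to work pathwise (fixing $\omega$) first, establish the deterministic estimate inside the expectation, and then take expectations at the very end using the structure of the $L^2_\omega$-norm.

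First I would rewrite the interior term. On each interval $I_m$ the mean value $\mean{u}_m = \dashint_{I_m} u \ds$ minimizes the $L^2(I_m; L^2_x)$ distance among constants, but more usefully I would exploit the identity $u(s) - \mean{u}_m = \dashint_{I_m} (u(s) - u(\sigma)) \dd\sigma$ for $s \in I_m$. Applying the triangle/Jensen inequality in the form $\norm{u(s) - \mean{u}_m}_{L^2_x} \leq \dashint_{I_m} \norm{u(s) - u(\sigma)}_{L^2_x} \dd\sigma$, I bound each difference $\norm{u(s) - u(\sigma)}_{L^2_x}$ by $\norm{\tau_h u(s)}_{L^2_x}$ with $h = \sigma - s$, where $\abs{h} \leq \tau$ since $s, \sigma \in I_m$. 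The key point is that every difference appearing has time-lag at most $\tau$, so after integrating in $s$ over $I_m$ and summing over $m$, the whole left-hand sum is controlled by $\sup_{\abs{h} \leq \tau} h^{-r\alpha}$-type quantities, which is exactly what the Nikolskii seminorm $\seminorm{u}_{B^\alpha_{r,\infty}L^2_x}$ measures. Concretely, each contribution carries a factor $\abs{h}^{r\alpha} \leq \tau^{r\alpha}$ extracted from the seminorm's essential supremum, producing the target power $\tau^\alpha$ after taking the $1/r$-th root.

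The main technical obstacle is matching the mixed integrability exponents: the left-hand side couples an $\ell^1$-sum over $m$ of $L^r$-norms in time to an outer $2/r$ power and then an $L^2_\omega$ expectation, whereas the Nikolskii seminorm is an $L^\infty$ in the lag $h$ of an $L^r$ norm in the base point. I would handle this by first doing the deterministic (fixed-$\omega$) estimate entirely at the level of $r$-th powers: I expect to show $\sum_{m=1}^M \int_{I_m} \norm{u - \mean{u}_m}_{L^2_x}^r \ds \lesssim \tau^{r\alpha} \seminorm{u}_{B^\alpha_{r,\infty}L^2_x}^r$ pathwise, using Jensen to pull the $\dashint_{I_m}$ inside the $r$-th power and then bounding the double time-integral by the essential supremum over small lags. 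Raising to the power $2/r$ and taking $\mathbb{E}$ then gives $\tau^{2\alpha}\,\mathbb{E}[\seminorm{u}_{B^\alpha_{r,\infty}L^2_x}^2]$, and a final square root yields the claim. The care needed is that the restricted-domain $L^r$ norm $\norm{\tau_h u}_{L^r(I \cap I - \{h\}; L^2_x)}$ appearing in the seminorm definition must dominate the partial integrals over the $I_m$'s; since the $I_m$ tile $I$ and all lags stay below $\tau$, the domains nest correctly and no boundary terms are lost.

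I do not anticipate needing the full strength of the Nikolskii structure beyond the supremum over lags $h \in (0,\tau]$; the estimate is essentially a quantified statement that mean-value approximation on a grid of width $\tau$ sees only time-increments up to $\tau$. The only place to be cautious is ensuring the double mean-value integral (one from writing $u - \mean{u}_m$, one from the base-point integration) does not lose a power of $\tau$ — but since both integrals are \emph{averages} (normalized by $\abs{I_m}$), they are harmless and contribute only constants depending on the tiling, absorbed into $\lesssim$.
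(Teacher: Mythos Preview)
Your proposal is correct and follows essentially the same route as the paper: write $u(s)-\mean{u}_m$ as an average of increments, apply Jensen to push the average inside the $r$-th power, change variables $t=s+h$ and use Fubini to reorganize as an integral over lags $|h|\le\tau$, then invoke the Nikolskii seminorm pointwise in $h$ and integrate to extract the factor $\tau^{r\alpha}$ before taking the $2/r$-th power and expectation. The only step you leave slightly implicit---the Fubini swap that turns $\sum_m \int_{I_m}\dashint_{I_m}$ into $\tau^{-1}\int_{|h|\le\tau}\int_{I\cap(I-h)}$---is exactly what the paper spells out, and your remark that ``the domains nest correctly'' covers it.
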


\begin{proof}
Due to Jensen's inequality
\begin{align*}
\mathbb{E} \left[ \left( \sum_{m=1}^M \int_{I_m} \norm{u - \mean{u}_m}_{L^2_x}^r \ds \right)^\frac{2}{r} \right] \leq \mathbb{E} \left[ \left(  \sum_{m=1}^M \int_{I_m} \dashint_{I_m} \norm{u(s) - u(t)}_{L^2_x}^r \dt \ds \right)^\frac{2}{r}\right].
\end{align*}
A change of variables $t = s+h$ and Fubini's Theorem yield
\begin{align*}
&\sum_{m=1}^M \tau \dashint_{I_m}  \dashint_{I_m} \norm{u(s) - u(t) }_{L^2_x}^r \dt  \ds \\
&= \sum_{m=1}^M \tau^{-1} \int_{I_m}  \int_{I_m - \set{s}} \norm{u(s) - u(s+h) }_{L^2_x}^r \dd h  \ds \\
&\leq \tau^{-1} \int_0^{2\tau}  \int_{I \cap I- \set{h}} \norm{u(s) - u(s+h) }_{L^2_x}^r \dd s  \dd h \\
&\leq \tau^{-1} \int_0^{2\tau} h^{\alpha r} \dd h \seminorm{u}_{B^{\alpha}_{r,\infty} L^2_x}^r = \frac{2^{\alpha r+1}}{\alpha r +1} \tau^{\alpha r} \seminorm{u}_{B^{\alpha}_{r,\infty} L^2_x}^r.
\end{align*}
Thus, 
\begin{align*}
\left( \mathbb{E} \left[ \left( \sum_{m=1}^M \int_{I_m} \norm{u - \mean{u}_m}_{L^2_x}^r \ds \right)^\frac{2}{r} \right] \right)^\frac{1}{2}&\leq \left( \mathbb{E} \left[ \left(  \frac{2^{\alpha r+1}}{\alpha r +1} \tau^{\alpha r} \seminorm{u}_{B^{\alpha}_{r,\infty} L^2_x}^r \right)^\frac{2}{r}\right] \right)^\frac{1}{2} \\
&\lesssim \tau^\alpha \left( \mathbb{E} \left[\seminorm{u}_{B^{\alpha}_{r,\infty} L^2_x}^2 \right] \right)^\frac{1}{2}.
\end{align*}
The proof is finished.
\end{proof}

\begin{remark}
Lemma \ref{lem:approximation quality} is also valid for $r=\infty$. Here we need to substitute the left hand side in~\eqref{eq:approximation quality} by
\begin{align*}
\left( \mathbb{E} \left[ \max_{m \in \set{1,\ldots,M}} \sup_{s \in I_m} \norm{u(s) - \mean{u}_m}_{L^2_x}^2 \right] \right)^\frac{1}{2}.
\end{align*}
Note, $B^{\alpha}_{\infty,\infty}(I) = C^{\alpha}(I)$ is the space of $\alpha$-H\"older continuous functions.  
\end{remark}

In our application the process $u$ only belongs to $L^2_\omega B^{1/2}_{\Phi_2,\infty} L^2_x \backslash L^2_\omega C^{1/2}_t L^2_x $. Therefore we need to adjust the stability result in terms of exponentially integrable Nikolskii spaces.
\begin{lemma} \label{lem:exponential_stability}
Let $\alpha \in (0,1)$ and $u \in L^2_\omega B^{\alpha}_{\Phi_2,\infty} L^2_x$ where $\Phi_2(t) = e^{t^2}-1$. Then
\begin{align} \label{eq:exponential_stability}
\left( \mathbb{E}\left[ \max_{m\in\set{1,\ldots,M}} \norm{u(t_m) - \mean{u}_m}_{L^2_x}^2 \right] \right)^\frac{1}{2}\lesssim \tau^\alpha \sqrt{\ln(1+\tau^{-1})} \left( \mathbb{E} \left[ \seminorm{u}_{ B^{\alpha}_{\Phi_2,\infty} L^2_x}^2 \right] \right)^\frac{1}{2}. 
\end{align}
\end{lemma}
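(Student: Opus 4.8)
The plan is to establish a pathwise bound that is \emph{uniform} in $m$,
\[
\norm{u(t_m) - \mean{u}_m}_{L^2_x} \lesssim \tau^\alpha \sqrt{\ln(1+\tau^{-1})}\, \seminorm{u}_{B^\alpha_{\Phi_2,\infty} L^2_x},
\]
and then to take the maximum over $m$ (harmless, since the right-hand side does not depend on $m$) followed by the $L^2_\omega$-norm. Throughout I abbreviate $\lambda := \seminorm{u}_{B^\alpha_{\Phi_2,\infty} L^2_x}$ and use the continuous-in-time representative of $u$ (the embedding $B^\alpha_{\Phi_2,\infty}(I;L^2_x) \hookrightarrow C(I;L^2_x)$ for $\alpha>0$), so that the point values $u(t_m)$ and the limits below are meaningful.

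First I would convert the point value $u(t_m)$ into a telescoping series of averages over shrinking intervals. Setting $J_j := [t_m - 2^{-j}\tau, t_m]$ for $j \ge 0$, so that $J_0 = I_m$, $\abs{J_j} = 2^{-j}\tau$ and $\dashint_{J_j} u \ds \to u(t_m)$, one has
\[
u(t_m) - \mean{u}_m = \sum_{j=0}^\infty \Big( \dashint_{J_{j+1}} u \ds - \dashint_{J_j} u \ds \Big).
\]
Writing each summand as a double mean value and using $J_{j+1} \subset J_j$ exactly as in the proof of Lemma~\ref{lem:approximation quality}, the triangle inequality gives
\[
\norm{u(t_m) - \mean{u}_m}_{L^2_x} \le \sum_{j=0}^\infty \frac{2}{\abs{J_j}^2} \int_{J_j} \int_{J_j} \norm{u(s) - u(t)}_{L^2_x} \dt \ds =: \sum_{j=0}^\infty A_j.
\]

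The heart of the argument is the per-scale estimate of $A_j$, and this is where the exponential integrability is used. Fix $j$, set $\ell := \abs{J_j} = 2^{-j}\tau$, substitute $t = s+h$ and apply Fubini; for fixed $\abs h \le \ell$ the seminorm controls the fixed-lag increment, $\norm{\tau_h u}_{L^{\Phi_2}(J_j;L^2_x)} \le \abs h^\alpha \lambda \le \ell^\alpha \lambda$. Jensen's inequality for the convex function $\Phi_2$, together with $\Phi_2^{-1}(v) = \sqrt{\ln(1+v)}$, then yields the crucial logarithmic loss when passing from $L^{\Phi_2}$ to $L^1$ on an interval of length $\ell$,
\[
\int_{J_j} \norm{\tau_h u(s)}_{L^2_x} \ds \le \ell\, \Phi_2^{-1}(\ell^{-1})\, \norm{\tau_h u}_{L^{\Phi_2}(J_j;L^2_x)} \le \ell^{1+\alpha} \sqrt{\ln(1+\ell^{-1})}\, \lambda.
\]
Integrating over $\abs h \le \ell$ and dividing by $\abs{J_j}^2 = \ell^2$ gives, uniformly in $m$,
\[
A_j \lesssim \ell^\alpha \sqrt{\ln(1+\ell^{-1})}\, \lambda = 2^{-j\alpha}\, \tau^\alpha \sqrt{\ln(1+2^{j}\tau^{-1})}\, \lambda.
\]

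Finally I would sum the geometric series. Using $\ln(1+2^{j}\tau^{-1}) \le j\ln 2 + \ln(1+\tau^{-1})$ and subadditivity of the square root, the decay $2^{-j\alpha}$ (with $\alpha>0$) makes both $\sum_j 2^{-j\alpha}$ and $\sum_j 2^{-j\alpha}\sqrt{j}$ finite, so that $\sum_j A_j \lesssim \tau^\alpha \sqrt{\ln(1+\tau^{-1})}\, \lambda$ uniformly in $m$; taking the maximum over $m$ and then the $L^2_\omega$-norm produces \eqref{eq:exponential_stability}. The main obstacle is precisely the per-scale step: one must extract exactly the factor $\sqrt{\ln(1+\ell^{-1})}$ from the $L^{\Phi_2} \to L^1$ embedding at scale $\ell$ — this is what the exponential Orlicz function $\Phi_2(t) = e^{t^2}-1$ buys through $\Phi_2^{-1}(v) = \sqrt{\ln(1+v)}$ — and then verify that the dyadic summation does not destroy it, i.e. that the geometric factor $2^{-j\alpha}$ dominates the $\sqrt{j}$ growth coming from $\sqrt{\ln(1+2^j\tau^{-1})}$, collapsing the sum to the single claimed factor $\sqrt{\ln(1+\tau^{-1})}$.
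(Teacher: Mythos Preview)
Your proof is correct and follows essentially the same approach as the paper: dyadic telescoping over the shrinking intervals $[t_m-2^{-j}\tau,t_m]$, Jensen's inequality with $\Phi_2$ to extract the factor $\Phi_2^{-1}(\ell^{-1})=\sqrt{\ln(1+\ell^{-1})}$ at each scale, and summation of the resulting geometric series. The only cosmetic differences are that the paper rewrites each telescoping term as a single integral with the fixed shift $2^{-k}\tau$ (rather than bounding by the double mean over $J_j\times J_j$) and interleaves the max/expectation earlier, whereas you obtain the cleaner pathwise bound first.
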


\begin{proof}
Define $I_m^k:= [t_{m} - 2^{-k}\tau, t_m]$. Due to the embedding $B^{\alpha}_{\Phi_2,\infty} \hookrightarrow C_t$ for all $\alpha > 0$ we find $u$ to be continuous as an $L^2_x$-valued process. Therefore, it holds
\begin{align*}
\lim_{k \to \infty} \mean{u}_{I_m^k} = u(t_m). 
\end{align*}
Observe, $\abs{I_m^{k}} = 2^{-k} \tau$ and $I_m^{k-1} = [t_m - 2^{-(k-1)}\tau,t_m - 2^{-k}\tau]  \cup [t_m - 2^{-k}\tau,t_m]$. A shift in the integral results in 
\begin{align*}
 u(t_m) - \mean{u}_m  &= \sum_{k=1}^\infty \mean{u}_{I_m^k} - \mean{u}_{I_m^{k-1}} \\
&= \sum_{k=1}^\infty 2^{-1} \dashint_{I_m^k} u(s) - u(s - 2^{-k} \tau) \ds.
\end{align*}

Jensen's inequality implies
\begin{align*}
&\mathbb{E} \left[ \max_{m\in\set{1,\ldots,M}} \norm{\mean{u}_m - u(t_m)}_{L^2_x}^2 \right] \\
&\quad \leq \mathbb{E} \left[ \max_{m\in\set{1,\ldots,M}} \left(\sum_{k=1}^\infty 2^{-1} \dashint_{I_m^k} \norm{ u(s) - u(s - 2^{-k} \tau) }_{L^2_x} \ds \right)^2  \right] \\
&\quad \leq \mathbb{E} \left[  \left(\sum_{k=1}^\infty 2^{-1} \max_{m\in\set{1,\ldots,M}} \dashint_{I_m^k} \norm{ u(s) - u(s - 2^{-k} \tau) }_{L^2_x} \ds \right)^2  \right] \\
&\quad \leq \mathbb{E} \left[  \left(\sum_{k=1}^\infty 2^{-1} \lambda_k \max_{m\in\set{1,\ldots,M}} \Phi_2^{-1} \left( \dashint_{I_m^k} \Phi_2 \left( \frac{\norm{ u(s) - u(s - 2^{-k} \tau) }_{L^2_x}}{\lambda_k} \right) \ds \right) \right)^2  \right]
\end{align*}
Choose $\lambda_k $ by 
\begin{align*}
\lambda_k := \inf \left\{ \mu > 0: \int_{I \cap I  - \set{2^{-k}\tau}} \Phi_2 \left( \frac{\norm{u(s) - u(s - 2^{-k}\tau)}_{L^2_x}}{\mu}  \right)\ds \leq 1 \right\}.
\end{align*}
In particular, since $I_m^k \subset I \cap I- \set{2^{-k}\tau}$,
\begin{align*}
\int_{I_m^k} \Phi_2 \left( \frac{\norm{ u(s) - u(s - 2^{-k} \tau ) }_{L^2_x}}{\lambda_k} \right) \ds  \leq 1.
\end{align*}
Thus,
\begin{align*}
&\mathbb{E} \left[  \left(\sum_{k=1}^\infty 2^{-1} \lambda_k \max_{m\in\set{1,\ldots,M}} \Phi_2^{-1} \left( \dashint_{I_m^k} \Phi_2 \left( \frac{\norm{ u(s) - u(s - 2^{-k} \tau) }_{L^2_x}}{\lambda_k} \right) \ds \right) \right)^2  \right] \\
&\quad \leq \mathbb{E} \left[  \left(\sum_{k=1}^\infty 2^{-1} \lambda_k \max_{m\in\set{1,\ldots,M}} \Phi_2^{-1} \left( \abs{I_m^k}^{-1} \right) \right)^2  \right].
\end{align*}
Since $u \in L^2_\omega B^{\alpha}_{\Phi_2,\infty} L^2_x $ it holds
\begin{align*}
\mathbb{E} \left[ \left( \sup_{k\in\mathbb{N}} (2^{-k}\tau)^{-\alpha} \lambda_k \right)^2 \right] \leq \mathbb{E} \left[ \seminorm{u}_{B^{\alpha}_{\Phi_2,\infty} L^2_x}^2 \right].
\end{align*}
It follows
\begin{align}
\begin{aligned}\label{eq:exp_stab1}
&\mathbb{E} \left[  \left(\sum_{k=1}^\infty 2^{-1} \lambda_k \max_{m\in\set{1,\ldots,M}} \Phi_2^{-1} \left( \abs{I_m^k}^{-1} \right) \right)^2  \right] \\
&\quad \leq \mathbb{E} \left[\sup_{k\in\mathbb{N}} \left((2^{-k}\tau)^{-\alpha} \lambda_k \right)^2 \right] \left(\sum_{k=1}^\infty 2^{-1} (2^{-k}\tau)^{\alpha}  \max_{m\in\set{1,\ldots,M}} \Phi_2^{-1} \left( \abs{I_m^k}^{-1} \right) \right)^2 \\
&\quad \leq \mathbb{E} \left[ \seminorm{u}_{B^{\alpha}_{\Phi_2,\infty} L^2_x}^2 \right] \left(\sum_{k=1}^\infty 2^{-1} (2^{-k}\tau)^{\alpha}  \sqrt{\ln (1+ (2^{-k}\tau)^{-1})} \right)^2.
\end{aligned}
\end{align}
Note,
\begin{align*}
\ln \left(1+ (2^{-k} \tau)^{-1} \right) &= \ln(\tau + 2^k) + \ln (\tau^{-1}) \\
&\leq \ln(1 + 2^k) + \ln (1+\tau^{-1}).
\end{align*}
Therefore,
\begin{align}
\begin{aligned}\label{eq:exp_stab2}
&\sum_{k=1}^\infty 2^{-1} (2^{-k}\tau)^{\alpha}  \sqrt{\ln (1+ (2^{-k}\tau)^{-1})}  \\
&\quad \leq \tau^{\alpha} \sum_{k=1}^\infty 2^{-1} 2^{-\alpha k} \left(  \sqrt{\ln (1+ 2^k)} + \sqrt{\ln(1+\tau^{-1})}  \right) \\
&\quad\leq \tau^{\alpha}\left(1 + \sqrt{\ln(1+\tau^{-1})} \right) \sum_{k=1}^\infty 2^{-1} 2^{-\alpha k}  \sqrt{\ln (1+ 2^k)} \\
&\quad\lesssim \tau^{\alpha} \sqrt{\ln(1+\tau^{-1})} .
\end{aligned}
\end{align}
The assertion follows by applying the square root in~\eqref{eq:exp_stab1} and using the estimate~\eqref{eq:exp_stab2}. 
\end{proof}

\subsection{The averaged algorithm}
Let $u$ be the strong solution of \eqref{eq:p-Laplace}, i.e. for all $t \in I$ and $\mathbb{P}$-a.s.
\begin{align} \label{eq:strong}
u(t) - u_0 - \int_0^t \Div S(\nabla u_\nu) \dd \nu - \int_0^t G(u_\nu) \dd W_\nu =0.
\end{align}
Take the average over $I_1$ in \eqref{eq:strong} to get
\begin{align} \label{eq:aver-strong1}
\mean{u}_1 - u_0 - \dashint_{I_1} \left\{ \int_0^t \Div S(\nabla u_\nu) \dd \nu + \int_0^t G(u_\nu) \dd W_\nu  \right\} \dt = 0.
\end{align}
To obtain the general evolution of the time averaged values of the solution we subtract \eqref{eq:strong} for $t$ and $t-\tau$ and take the average over $I_m$
\begin{align} \label{eq:aver-strongn}
\mean{u}_m - \mean{u}_{m-1} - \dashint_{I_m}  \left\{ \int_{t-\tau}^t \Div S(\nabla u_\nu) \dd \nu + \int_{t-\tau}^t G(u_\nu) \dd W_\nu  \right\} \dt = 0.
\end{align}
We denote by $(\cdot,\cdot)$ the $L^2_x$-inner product. The corresponding weak formulation reads for all $\xi \in L^2_x \cap W^{1,p}_{0,x}$
\begin{align} \label{eq:weak}
\left( \mean{u}_m - \mean{u}_{m-1}, \xi \right) + \dashint_{I_m}  \int_{t-\tau}^t  \left(  S(\nabla u_\nu)  , \nabla \xi \right) \dd \nu \dt = \left( \dashint_{I_m}  \int_{t-\tau}^t G(u_\nu) \dd W_\nu \dt, \xi \right).
\end{align}
Due to the (stochastic) Fubini's Theorem we can equivalently write
\begin{subequations}
\begin{align} \label{eq:aver-strongweigth0}
\left( \mean{u}_1 - u_0, \xi \right) + \int_\R  a_{0}(\nu)  \left(  S(\nabla u_\nu)  , \nabla \xi \right) \dd \nu  &= \left(\int_\R a_{0}(\nu) G(u_\nu) \dd W_\nu , \xi \right),\\ \label{eq:aver-strongweigthm}
\left( \mean{u}_m - \mean{u}_{m-1}, \xi \right) + \int_\R  a_{m-1}(\nu)  \left(  S(\nabla u_\nu)  , \nabla \xi \right) \dd \nu  &= \left(\int_\R a_{m-1}(\nu) G(u_\nu) \dd W_\nu , \xi \right),
\end{align}
\end{subequations}
where the weights are given by
\begin{subequations}
\begin{align}\label{eq:weigths-0}
a_0(\nu) &:= \frac{\nu}{\tau} 1_{I_1}(\nu),\\
a_{m-1}(\nu) &:= \frac{\nu - t_{m-2}}{\tau} 1_{I_{m-1}}(\nu) + \frac{t_m - \nu}{\tau} 1_{I_{m}}(\nu).
\end{align}
\end{subequations}
The above considerations motivate the construction of the following numerical scheme:\\
\begin{subequations} \label{eq:algo}
We initialize the algorithm as 
\begin{align}\label{eq:num0}
v_0 := \Pi_2 u_0.
\end{align} 
In order to accurately reflect the special character of the first step \eqref{eq:aver-strong1}, we define $v_1$ via
\begin{align} \label{eq:num1}
\left(v_1 -  v_0, \xi_h \right) + \frac{\tau}{2} \left( S(\nabla v_1), \nabla \xi_h \right) &= \left(G(v_0)\mean{W}_1 , \xi_h \right).
\end{align}
Moreover the evolution for $m \in \set{2,\ldots,M}$ is determinded via
\begin{align} \label{eq:num2}
\left(v_m - v_{m-1}, \xi_h \right) + \tau \left( S(\nabla v_m), \nabla \xi_h \right) = \left(G(v_{m-2}) \left(\mean{W}_m - \mean{W}_{m-1} \right), \xi_h \right)
\end{align}
for all $\xi_h \in V_h$ and $\mathbb{P}$-a.s.
\end{subequations}

The need of a special step size in the first step~\eqref{eq:num1} might be undesirable. It can be overcome by performing a full initial step. We propose the following second algorithm.

\begin{subequations} \label{algo:2nd}
Initialize
\begin{align}\label{algo:2nd-0}
w_0 := \Pi_2 u_0.
\end{align}
Full initial step
\begin{align} \label{algo:2nd-1}
\left(w_1 -  w_0, \xi_h \right) + \tau \left( S(\nabla w_1), \nabla \xi_h \right) &= \left(G(w_0)\mean{W}_1 , \xi_h \right).
\end{align}
Time stepping, for $m \geq 2$,
\begin{align} \label{algo:2nd-2}
\left(w_m -  w_{m-1}, \xi_h \right) + \tau \left( S(\nabla w_m), \nabla \xi_h \right) &= \left(G(w_{m-2})(\mean{W}_{m}-\mean{W}_{m-1}) , \xi_h \right).
\end{align}
\end{subequations}

 The additional difficulties in the error analysis only occur in the estimate of the initial error. 

The next theorem ensures that the numerical schemes are well defined. The arguments are rather standard and we only refer to \cite{MR3607728} Section 3 for a detailed analysis of a more general system.
\begin{theorem}\label{thm:Existence}
Let $u_0 \in L^2_\omega L^2_x$ be $\mathcal{F}_0$-measurable and Assumption \ref{ass:Noise} be satisfied. Then for all $M \in \mathbb{N}$ there exists a unique $\bfv = (v_0,\ldots,v_M) \in (V_h)^{M+1}$ such that $v_m$ is $\mathcal{F}_{t_m}$-measurable for all $m \in \set{0,\ldots,M}$ and $\bfv$ solves \eqref{eq:algo} respectively \eqref{algo:2nd}.
\end{theorem}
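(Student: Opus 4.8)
The plan is to argue by induction on $m$, noting that the initializations $v_0 = \Pi_2 u_0$ and $w_0 = \Pi_2 u_0$ are explicit, while every subsequent relation in \eqref{eq:algo} and \eqref{algo:2nd} is a single nonlinear equation for the next iterate of the form
\begin{align*}
(v, \xi_h) + \sigma\, (S(\nabla v), \nabla \xi_h) = (f, \xi_h) \qquad \text{for all } \xi_h \in V_h,
\end{align*}
with $\sigma \in \set{\tau/2, \tau}$ and $f \in L^2_x$ collecting the already-constructed iterates together with the noise term; e.g.\ in \eqref{eq:num2} one has $f = v_{m-1} + G(v_{m-2})(\mean{W}_m - \mean{W}_{m-1})$, and in \eqref{eq:num1} one has $\sigma = \tau/2$ and $f = v_0 + G(v_0)\mean{W}_1$. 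It therefore suffices to show that for every fixed $f \in L^2_x$ this finite-dimensional problem has a unique solution $v \in V_h$ depending measurably on $f$.

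For existence and uniqueness I would identify the equation as the Euler--Lagrange equation of
\begin{align*}
\mathcal{E}(v) := \tfrac12 \norm{v}_{L^2_x}^2 + \sigma \int_{\mathcal{O}} \varphi(\abs{\nabla v}) \dx - (f, v),
\end{align*}
with $\varphi$ as in \eqref{eq:energy}. Since every $v_h \in V_h$ is piecewise polynomial on the bounded domain $\mathcal{O}$, its gradient is bounded and $\mathcal{E}$ is finite on the finite-dimensional space $V_h$. The map $v \mapsto \int_{\mathcal{O}} \varphi(\abs{\nabla v}) \dx$ is convex because $\varphi$ is convex and non-decreasing, so $\mathcal{E}$ is strictly convex through the quadratic term, continuous, and coercive (the linear term is dominated by $\tfrac12\norm{v}^2_{L^2_x}$, and all norms on $V_h$ are equivalent). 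Hence $\mathcal{E}$ admits a unique minimizer, and differentiating in the direction $\xi_h$ — using $\varphi'(\abs{\xi})\tfrac{\xi}{\abs{\xi}} = S(\xi)$ from \eqref{eq:S} — recovers precisely the scheme equation. Equivalently, one may invoke monotone operator theory: the map $v \mapsto v + \sigma S(\nabla v)$ is continuous, coercive, and strictly monotone (Lemma~\ref{lem:V-coercive} gives monotonicity of the diffusion part and the mass term supplies strictness), hence a homeomorphism of $V_h$.

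It then remains to thread measurability through the induction. The solution map $f \mapsto v$ is the inverse of the homeomorphism above, hence continuous and therefore Borel measurable. Starting from the $\mathcal{F}_0$-measurable $v_0 = \Pi_2 u_0$, the datum $f$ at step $m$ is assembled from the $\mathcal{F}_{t_{m-1}}$-measurable iterates $v_{m-1}, v_{m-2}$ and the $\mathcal{F}_{t_m}$-measurable averaged increment $\mean{W}_m - \mean{W}_{m-1}$ (respectively $\mean{W}_1$ in the first step), so $f$ is $\mathcal{F}_{t_m}$-measurable; composing with the continuous solution map shows $v_m$ is $\mathcal{F}_{t_m}$-measurable, closing the induction. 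I expect existence and uniqueness to be immediate from strict convexity, so the only genuine (and mild) obstacle is the measurability bookkeeping, which reduces to the continuity of the solution map together with the adaptedness of the noise increments.
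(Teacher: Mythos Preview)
Your proposal is correct and is exactly the standard argument one would expect here. The paper itself does not give a proof of this theorem; it merely states that ``the arguments are rather standard and we only refer to \cite{MR3607728} Section~3 for a detailed analysis of a more general system.'' Your induction via strict convexity of the shifted energy (or equivalently monotone operator theory from Lemma~\ref{lem:V-coercive}) together with continuity of the solution map for the measurability step is precisely the expected route, so there is nothing to compare against.
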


\subsection{Error analysis}
We are ready to state and prove the main result.
\begin{theorem}[Convergence of Algorithm \eqref{eq:algo}] \label{thm:Convergence}
Let $u_0 \in L^2_\omega L^2_x$ be $\mathcal{F}_0$-measurable and Assumption \ref{ass:Noise} be satisfied. Additionally, assume $\mathcal{T}_h$ to be quasi-uniform. Moreover, let $u$ be the weak solution to \eqref{eq:p-Laplace} and $\bfv$ be the numerical solution of \eqref{eq:algo}. 
\begin{enumerate}
\item \label{en:Convergence1}
Assume 
\begin{subequations}
\begin{align} \label{ass:reg-sol}
 u &\in L^2_\omega B^{1/2}_{2,\infty} L^2_x \cap L^2_\omega L^\infty_t W^{1,2}_x, \\
 V(\nabla u) &\in L^2_\omega B^{1/2}_{2,\infty} L^2_x \cap L^2_\omega L^2_t W^{1,2}_x.
\end{align}
\end{subequations}
Then it holds
\begin{align}
\begin{aligned} \label{eq:Rates}
&\mathbb{E} \left[ \max_{m=1,\ldots,M} \norm{\mean{u}_m - v_m}_{L^2_x}^2 + \sum_{m=1}^M\int_{I_m} \norm{V(\nabla u_\nu) - V(\nabla v_m)}_{L^2_x}^2 \dd \nu   \right] \\
&\hspace{3em}\lesssim \tau  + h^2.
\end{aligned}
\end{align}
\item \label{en:Convergence2}
If additionally 
\begin{align} \label{ass:time-reg}
u \in L^2_\omega B^{1/2}_{\Phi_2,\infty} L^2_x,
\end{align}
then
\begin{align}
\begin{aligned} \label{eq:Rates2}
\mathbb{E} \left[ \max_{m=1,\ldots,M} \norm{u(t_m)- v_m}_{L^2_x}^2 \right] \lesssim \tau \ln(1+\tau^{-1})  + h^2.
\end{aligned}
\end{align}
\end{enumerate}
\end{theorem}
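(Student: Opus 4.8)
The plan is to compare the discrete scheme \eqref{eq:num2} with the exact averaged identity \eqref{eq:aver-strongweigthm} and to measure the error in the natural nonlinear quantity. At each time level I would split the error as $\mean{u}_m - v_m = (\mean{u}_m - \Pi_2\mean{u}_m) + e_m$, with $e_m := \Pi_2\mean{u}_m - v_m \in V_h$, so that the projection part is immediately controlled by Lemma~\ref{lem:l2-stab} and, on the gradient level, by Proposition~\ref{prop:non-stability}, while only $e_m$ must be estimated through the scheme. Subtracting \eqref{eq:num2} from \eqref{eq:aver-strongweigthm} and testing with $\xi_h = e_m$ (using that $\Pi_2$ is the $L^2_x$-projection, so the discrete time difference sees only $e_m - e_{m-1}$) yields, for $m\ge 2$, the error identity $(e_m - e_{m-1}, e_m) + \int_\R a_{m-1}(\nu)\,(S(\nabla u_\nu) - S(\nabla v_m), \nabla e_m)\dd\nu = \big(\int_\R a_{m-1}(\nu)(G(u_\nu) - G(v_{m-2}))\dd W_\nu, e_m\big)$, with the analogous identity for the special first step coming from \eqref{eq:num1} and \eqref{eq:aver-strongweigth0}.

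\textbf{Diffusion term.} Next I would treat the elliptic term. Writing $\nabla e_m = \nabla(u_\nu - v_m) + \nabla(\Pi_2\mean{u}_m - u_\nu)$ inside the $\nu$-integral, the first piece produces, via the $V$-coercivity of Lemma~\ref{lem:V-coercive}, the good term $\int_\R a_{m-1}(\nu)\,\abs{V(\nabla u_\nu) - V(\nabla v_m)}^2\dd\nu$, while the second piece is handled by the generalized Young inequality (Lemma~\ref{lem:gen-young}): a fraction $\delta$ of the coercive term is reabsorbed and the remainder is bounded by $\abs{V(\nabla u_\nu) - V(\nabla\Pi_2\mean{u}_m)}^2$, which splits into a projection error (controlled by $h^2\norm{\nabla V(\nabla\mean{u}_m)}_{L^2_x}^2$ through Proposition~\ref{prop:non-stability}) and a time-oscillation error of the nonlinear quantity $V(\nabla u)$. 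After summation in $m$, using the normalization $\sum_m a_{m-1}\equiv 1$ together with the oscillation control, the coercive terms reconstruct the target sum $\sum_m\int_{I_m}\abs{V(\nabla u_\nu)-V(\nabla v_m)}^2\dd\nu$, while the remainders contribute $O(h^2)$ via $V(\nabla u)\in L^2_\omega L^2_t W^{1,2}_x$ and $O(\tau)$ via $V(\nabla u)\in L^2_\omega B^{1/2}_{2,\infty}L^2_x$ and Lemma~\ref{lem:approximation quality} (with $\alpha=1/2$).

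\textbf{Temporal term and closing.} I would then telescope $(e_m - e_{m-1}, e_m) = \tfrac12(\norm{e_m}_{L^2_x}^2 - \norm{e_{m-1}}_{L^2_x}^2 + \norm{e_m - e_{m-1}}_{L^2_x}^2)$, sum the error identity up to a general index, and take the maximum over that index. Splitting the noise cross term as $(\,\cdot\,, e_{m-2}) + (\,\cdot\,, e_m - e_{m-2})$, the first summand is a discrete martingale in the upper index --- because $G(v_{m-2})$ is $\mathcal{F}_{t_{m-2}}$-measurable while $\int_\R a_{m-1}(\nu)\dd W_\nu$ is a mean-zero increment over $[t_{m-2},t_m]$ --- so its maximum is controlled by a Doob/Burkholder inequality and, via the It\^o isometry \eqref{eq:ito-iso} and the growth bound \eqref{ass:growth}, contributes at order $\tau$; the second summand is estimated by Young's inequality, again producing an Itô-isometry term of order $\tau$ and a Lipschitz term $\norm{u_\nu - v_{m-2}}_{L^2_x}^2$ (via \eqref{ass:Lipschitz}). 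After inserting the projection and time-averaging errors (of order $h^2$ and $\tau$ through Lemma~\ref{lem:approximation quality} and $u\in L^2_\omega B^{1/2}_{2,\infty}L^2_x$), the Lipschitz term reduces to $\norm{e_{m-2}}_{L^2_x}^2$ and is absorbed by a discrete Gronwall lemma. The initial error $\norm{u_0 - \Pi_2 u_0}_{L^2_x}$ together with the half-step \eqref{eq:num1} enters at the correct order. This establishes \eqref{eq:Rates}.

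\textbf{Part (2) and the main obstacle.} For \eqref{eq:Rates2} I would use $\norm{u(t_m) - v_m}_{L^2_x}^2 \lesssim \norm{u(t_m) - \mean{u}_m}_{L^2_x}^2 + \norm{\mean{u}_m - v_m}_{L^2_x}^2$, take the maximum in $m$ and expectation, bound the first summand by Lemma~\ref{lem:exponential_stability} with $\alpha=1/2$ (the source of the factor $\ln(1+\tau^{-1})$, requiring the exponential Nikolskii regularity \eqref{ass:time-reg}), and the second by \eqref{eq:Rates}. I expect the main obstacle to lie in the stochastic term of Part~(1): obtaining the \emph{optimal} rate $\tau$ under the mere Nikolskii regularity \eqref{ass:reg-sol}, rather than a lossy $\tau^{2\alpha}$, hinges on the interplay between the two-step delay $G(v_{m-2})$ (which restores the martingale structure and annihilates the leading cross term in expectation), the It\^o isometry, and the $B^{1/2}_{2,\infty}$ time-averaging estimate, all while preserving the maximum in time through a martingale inequality.
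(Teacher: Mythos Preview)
Your overall architecture matches the paper's proof: same error splitting via $\Pi_2$, same treatment of the nonlinear diffusion via Lemmas~\ref{lem:V-coercive} and~\ref{lem:gen-young}, same decomposition of the noise term by inserting $e_{m-2}$, and the same route to Part~\ref{en:Convergence2} through Lemma~\ref{lem:exponential_stability}.

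There is, however, a genuine gap in your treatment of the stochastic term. You assert that
\[
K_{m^*}:=\sum_{m=2}^{m^*}\Big(\int_{\mathbb{R}} a_{m-1}(\nu)\big(G(u_\nu)-G(v_{m-2})\big)\dd W_\nu,\; \Pi_2 e_{m-2}\Big)
\]
is a discrete martingale in $m^*$, so that a Doob/BDG inequality applies directly. It is not. The weight $a_{m-1}$ is supported on $[t_{m-2},t_m]$, so consecutive summands involve stochastic integrals over \emph{overlapping} time intervals $[t_{m-2},t_m]$ and $[t_{m-1},t_{m+1}]$. Concretely,
\[
\mathbb{E}\big[K_{m^*+1}-K_{m^*}\,\big|\,\mathcal{F}_{t_{m^*}}\big]
=\Big(\int_{t_{m^*-1}}^{t_{m^*}} a_{m^*}(\nu)\big(G(u_\nu)-G(v_{m^*-1})\big)\dd W_\nu,\;\Pi_2 e_{m^*-1}\Big)\neq 0,
\]
because the portion of the integral on $[t_{m^*-1},t_{m^*}]$ is $\mathcal{F}_{t_{m^*}}$-measurable. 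Your justification (``$G(v_{m-2})$ is $\mathcal{F}_{t_{m-2}}$-measurable and the increment is mean-zero over $[t_{m-2},t_m]$'') shows each summand has zero expectation, but that is not enough for the \emph{maximal} inequality you need.

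The paper fixes this by working with the Doob martingale $m^*\mapsto\mathbb{E}[K_M\mid\mathcal{F}_{t_{m^*}}]$ and identifying the \emph{compensator} $K_{m^*}-\mathbb{E}[K_M\mid\mathcal{F}_{t_{m^*}}]$ explicitly as the single overlap term above. BDG is then applied to the Doob martingale (whose increments are computed and localized to $I_m$), and the compensator is estimated separately by Young's inequality, It\^o isometry and \eqref{ass:Lipschitz}; both contributions land at order $\tau+h^2$ plus a Gronwall-absorbable $\tau\sum_m\mathbb{E}\norm{\Pi_2 e_m}_{L^2_x}^2$. Without this compensator step your maximal estimate for the noise does not close, and this is precisely where the two-step delay $G(v_{m-2})$ pays off: it makes the compensator a \emph{single} boundary term per $m^*$ rather than an uncontrolled sum.
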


\begin{remark}
The regularity assumption~\eqref{ass:time-reg} is optimal in the sense, that it is the limiting space of the time regularity of the Wiener process $W$. Hyt\"onen and Veraar prove in~\cite{VerHyt08} that a Banach space valued Brownian motion has full $1/2$-differentiability only on the Nikolskii-scale. More precisely, they show for all $q,r \in [1,\infty)$ and $\mathbb{P}$-a.s. 
\begin{align*}
W \in B^{1/2}_{\Phi_2, \infty} \quad \text{ and } \quad  W \not \in B^{1/2}_{q,r}.
\end{align*}

The logarithmic term in~\eqref{eq:Rates2} has already been used in the context of rough stochastic differential equations~\cite{MR4125787,LL21}. It quantifies the distance of $L^{\Phi_2}$ and $L^\infty$. 

Theorem~\ref{thm:Convergence} can be generalized to cover fractional time and space regularity assumptions as done for the deterministic $p$-Laplace system in~\cite{MR4286257}.
\end{remark}

\begin{proof}
\textbf{Part \ref{en:Convergence1}:} Fix $m \in \set{2,\ldots,M}$, subtract \eqref{eq:weak} from \eqref{eq:num2} and choose $\xi_h = \Pi_2 e_m := \Pi_2 \mean{u}_m - v_m $
\begin{align*}
H_1 + H_2 &:= \left(e_m - e_{m-1}, \Pi_2 e_m   \right) + \dashint_{I_m}  \int_{t-\tau}^t  \left(  S(\nabla u_\nu)  - S(\nabla v_m) , \nabla \Pi_2 e_m \right) \dd \nu \dt\\
& = \left( \dashint_{I_m}  \int_{t-\tau}^t [G(u_\nu)  - G(v_{m-2})] \dd W_\nu \dt, \Pi_2 e_m  \right) =: H_3.
\end{align*}
Now use the symmetry of the $L^2_x$-projection and the algebraic identity $2a(a-b) = a^2 - b^2 + (a-b)^2$ to get
\begin{align*}
H_1 &= \left(\Pi_2 (e_m - e_{m-1}), \Pi_2 e_m   \right) \\
&= \frac{1}{2} \left(\norm{\Pi_2 e_m }_{L^2_x}^2 -\norm{\Pi_2 e_{m-1} }_{L^2_x}^2 + \norm{\Pi_2[ e_m- e_{m-1}] }_{L^2_x}^2    \right).
\end{align*}
The second term, due to the $V$-coercivity \eqref{eq:V-coercive},
\begin{align*}
H_2 &=  \dashint_{I_m}  \int_{t-\tau}^t  \left(  S(\nabla u_\nu)  - S(\nabla v_m) , \nabla \Pi_2 \mean{u}_m - \nabla v_m \right) \dd \nu \dt \\
&\eqsim \dashint_{I_m}  \int_{t-\tau}^t  \norm{V(\nabla u_\nu) - V(\nabla v_m)}_{L^2_x}^2 \dd \nu \dt \\
&\quad + \dashint_{I_m}  \int_{t-\tau}^t  \left(  S(\nabla u_\nu)  - S(\nabla v_m) , \nabla \Pi_2 \mean{u}_m - \nabla u_\nu \right) \dd \nu \dt.
\end{align*}
Summation over $m \in \set{2,\ldots,m^*}$ with $m^* \leq M$ results in
\begin{align*}
\norm{\Pi_2 e_{m^*}}_{L^2_x}^2 &+ \sum_{m=2}^{m^*} \norm{\Pi_2[ e_m- e_{m-1}] }_2^2 + \sum_{m=2}^{m^*}\dashint_{I_m}  \int_{t-\tau}^t  \norm{V(\nabla u_\nu) - V(\nabla v_m)}_{L^2_x}^2 \dd \nu \dt \\
&\eqsim \norm{\Pi_2 e_1}_{L^2_x}^2 + \sum_{m=2}^{m^*} \left( \dashint_{I_m}  \int_{t-\tau}^t [G(u_\nu)  - G(v_{m-2})] \dd W_\nu \dt, \Pi_2 e_m  \right) \\
&\quad - \sum_{m=2}^{m^*} \dashint_{I_m}  \int_{t-\tau}^t  \left(  S(\nabla u_\nu)  - S(\nabla v_m) , \nabla \Pi_2 \mean{u}_m - \nabla u_\nu \right) \dd \nu \dt.
\end{align*}

Take the maximum over $m^* \in \set{2,\ldots,M}$ and expectation
\begin{align}
\begin{aligned} \label{eq:big_est}
&\mathbb{E} \left[ \max_{m^* \in \set{2,\ldots,M}} \norm{\Pi_2 e_{m^*}}_{L^2_x}^2 \right] + \mathbb{E} \left[\sum_{m=2}^M \norm{\Pi_2[ e_m- e_{m-1}] }_{L^2_x}^2 \right] \\
&\quad + \mathbb{E} \left[\sum_{m=2}^M \dashint_{I_m}  \int_{t-\tau}^t  \norm{V(\nabla u_\nu) - V(\nabla v_m)}_{L^2_x}^2 \dd \nu \dt \right] \\
&\lesssim \mathbb{E} \left[ \norm{\Pi_2 e_1}_{L^2_x}^2 \right] + \mathbb{E} \left[ \max_{{m^*} \in \set{2,\ldots,M}} \sum_{m=2}^{m^*} \left( \dashint_{I_m}  \int_{t-\tau}^t [G(u_\nu)  - G(v_{m-2})] \dd W_\nu \dt, \Pi_2 e_m  \right)   \right]\\
& \quad + \mathbb{E} \left[ \sum_{m=2}^M \dashint_{I_m}  \int_{t-\tau}^t  \abs{ \left(  S(\nabla u_\nu)  - S(\nabla v_m) , \nabla \Pi_2 \mean{u}_m - \nabla u_\nu \right) } \dd \nu \dt\right] \\
&=: J_1 + J_2 + J_3.
\end{aligned}
\end{align}

\textbf{Step 1:} We start by estimating the initial error. Similarly as before, we subtract the weak formulation of \eqref{eq:aver-strong1} from \eqref{eq:num1} and choose $\xi_h = \Pi_2 e_1$
\begin{align*}
\left( e_1 - e_0, \Pi_2 e_1 \right) &+ \dashint_{I_1} \int_0^t \left( S(\nabla u_\nu) - S(\nabla v_1), \nabla \Pi_2 \mean{u}_1 - \nabla v_1 \right) \dd \nu \dt \\
&\quad = \left( \dashint_{I_1} \int_0^t G(u_\nu) - G(v_0) \dd W_\nu \dt, \Pi_2 e_1 \right),
\end{align*}
where $e_0 := u_0 - v_0$. By \eqref{eq:num0} we have $\Pi_2 e_0 = 0$. Therefore,
\begin{align}
\begin{aligned} \label{eq:init_est}
&\mathbb{E}\left[ \norm{\Pi_2 e_1}_{L^2_x}^2 \right] + \mathbb{E} \left[ \dashint_{I_1} \int_0^t \norm{V(\nabla u_\nu) - V(\nabla v_1)}_{L^2_x}^2 \dd \nu \dt \right] \\
&\quad \lesssim \mathbb{E} \left[ \left( \dashint_{I_1} \int_0^t G(u_\nu) - G(v_0) \dd W_\nu \dt, \Pi_2 e_1 \right) \right] \\
&\quad \quad + \mathbb{E}\left[ \dashint_{I_1} \int_0^t \left( S(\nabla u_\nu) - S(\nabla v_1), \nabla u_\nu - \nabla \Pi_2 \mean{u}_1 \right) \dd \nu \dt \right]\\ 
&\quad =: J_{1,a} + J_{1,b}.
\end{aligned}
\end{align}
Due to H\"older's and Young's inequalities and It\^o isometry
\begin{align*}
J_{1,a} &\leq \frac{1}{2} \mathbb{E} \left[\norm{   \dashint_{I_1} \int_0^t G(u_\nu) - G(v_0) \dd W_\nu  \dt }_{L^2_x}^2\right] + \frac{1}{2} \mathbb{E} \left[ \norm{\Pi_2 e_1}_{L^2_x}^2 \right] \\
&= \frac{1}{2} \mathbb{E} \left[  \int_0^\tau a_0(\nu)^2 \norm{ G(u_\nu) - G(v_0)}_{L_2(U;L^2_x)}^2 \dd \nu   \right] + \frac{1}{2} \mathbb{E} \left[ \norm{\Pi_2 e_1}_{L^2_x}^2 \right].
\end{align*}
Absorb the second term to the left hand side. For the first term we use $a_0 \leq 1$ and the Lipschitz assumption \eqref{ass:Lipschitz}. Now, since the operator norm of the $L^2_x$-projection is bounded by one, 
\begin{align} \nonumber
&\mathbb{E} \left[  \int_0^\tau a_0(\nu)^2 \norm{ G(u_\nu) - G(v_0)}_{L_2(U;L^2_x)}^2 \dd \nu   \right] \\ \label{eq:init_error}
&\quad \lesssim \mathbb{E} \left[  \int_0^\tau \norm{u_\nu-v_0}_{L^2_x}^2 \dd \nu  \right] \lesssim \tau \left( \mathbb{E} \left[ \norm{u}_{L^\infty_t L^2_x}^2  \right] + \mathbb{E} \left[ \norm{ u_0}_{L^2_x}^2 \right] \right).
\end{align}
Thanks to the generalized Young's inequality, cf. Lemma \ref{lem:gen-young},
\begin{align*}
J_{1,b} &\leq \delta \mathbb{E} \left[ \dashint_{I_1} \int_0^t \norm{V(\nabla u_\nu) - V(\nabla v_1)}_{L^2_x}^2 \dd \nu \dt \right] \\
&\quad + c_\delta \mathbb{E} \left[\dashint_{I_1}\dashint_{I_1} \int_0^t \norm{V(\nabla u_\nu) - V(\nabla \Pi_2 u_s)}_{L^2_x}^2 \dd \nu \dt \ds \right].
\end{align*}
Absorb the first term to the left hand side in \eqref{eq:init_est}. The second is split up into a space and a time error. Using the nonlinear stability of the $L^2_x$-projection, cf. Proposition~\ref{prop:non-stability},
\begin{align*}
&\mathbb{E} \left[\dashint_{I_1}\dashint_{I_1} \int_0^t \norm{V(\nabla u_\nu) - V(\nabla \Pi_2 u_s)}_{L^2_x}^2 \dd \nu \dt \ds \right]\\
&\lesssim \mathbb{E} \left[\dashint_{I_1}\dashint_{I_1} \int_0^t \norm{V(\nabla u_\nu) - V(\nabla u_s)}_{L^2_x}^2 \dd \nu \dt \ds \right] \\
&\quad + \mathbb{E} \left[\dashint_{I_1}\dashint_{I_1} \int_0^t \norm{V(\nabla u_s) - V(\nabla \Pi_2 u_s)}_{L^2_x}^2 \dd \nu \dt \ds \right] \\
&\lesssim \tau \mathbb{E} \left[ \seminorm{V(\nabla u)}_{B^{1/2}_{2,\infty} L^2_x}^2 \right]  + h^2 \mathbb{E} \left[ \norm{\nabla V(\nabla u)}_{L^2_t L^2_x}^2 \right].
\end{align*}
Overall, we arrive at the estimate from~\eqref{eq:init_est}
\begin{align}\label{eq:J1}
\begin{aligned}
&J_1 +(1-\delta) \mathbb{E} \left[ \dashint_{I_1} \int_0^t \norm{V(\nabla u_\nu) - V(\nabla v_1)}_{L^2_x}^2 \dd \nu \dt \right] \\
&\lesssim \tau \left( \mathbb{E} \left[ \norm{u}_{L^\infty_t L^2_x}^2  \right] + \mathbb{E} \left[ \norm{ u_0}_{L^2_x}^2 \right] \right) \\
&\quad + c_\delta\left( \tau \mathbb{E} \left[ \seminorm{V(\nabla u)}_{B^{1/2}_{2,\infty} L^2_x}^2 \right]  + h^2 \mathbb{E} \left[ \norm{\nabla V(\nabla u)}_{L^2_t L^2_x}^2 \right] \right).
\end{aligned}
\end{align}

\textbf{Step 2:} The stochastic part in~\eqref{eq:big_est} needs some refined analysis
\begin{align*}
J_2 &\leq \mathbb{E} \left[ \max_{m^* \in \set{2,\ldots,M}} \sum_{m=2}^{m^*} \left( \dashint_{I_m}  \int_{t-\tau}^t [G(u_\nu)  - G(v_{m-2})] \dd W_\nu \ds, \Pi_2 (e_m- e_{m-2})  \right)   \right] \\
&\quad + \mathbb{E} \left[ \max_{m^* \in \set{2,\ldots,M}} \sum_{m=2}^{m^*} \left( \dashint_{I_m}  \int_{t-\tau}^t [G(u_\nu)  - G(v_{m-2})] \dd W_\nu \ds, \Pi_2 e_{m-2}  \right)   \right]\\
&=: J_{2,a} + J_{2,b}.
\end{align*}
The first, due to Hölder's and Young's inequalities and an index shift,
\begin{align*}
J_{2,a} &\leq \mathbb{E} \left[ \sum_{m=2}^M \left( c_\varepsilon \norm{ \dashint_{I_m}  \int_{t-\tau}^t [G(u_\nu)  - G(v_{m-2})] \dd W_\nu \ds }_{L^2_x}^2 + \varepsilon\norm{\Pi_2 (e_m- e_{m-2})}_{L^2_x}^2 \right) \right]\\
&\lesssim c_\varepsilon \mathbb{E} \left[ \sum_{m=2}^M \norm{ \dashint_{I_m}  \int_{t-\tau}^t [G(u_\nu)  - G(v_{m-2})] \dd W_\nu \ds }_{L^2_x}^2  \right] \\
&\quad + \varepsilon \left( \mathbb{E} \left[ \sum_{m=2}^M  \norm{\Pi_2 (e_m- e_{m-1})}_{L^2_x}^2  \right] + \mathbb{E} \left[ \norm{\Pi_2 e_1}_{L^2_x}^2 \right] \right).
\end{align*}
The second term can be absorbed to the left hand side in \eqref{eq:big_est}. The third term is nothing but the initial error $J_1$. For the first term we invoke It\^o isometry, the Lipschitz condition \eqref{ass:Lipschitz} and $a_{m-1} \leq 1$
\begin{align*}
&\mathbb{E} \left[ \sum_{m=2}^M \norm{ \dashint_{I_m}  \int_{t-\tau}^t [G(u_\nu)  - G(v_{m-2})] \dd W_\nu \ds }_{L^2_x}^2  \right] \\
&\quad = \sum_{m=2}^M  \int_{t_{m-2}}^{t_m} a_{m-1}^2(\nu) \mathbb{E} \left[   \norm{ G(u_\nu)  - G(v_{m-2})}_{L_2(\mathcal{U};L^2_x)}^2\right] \dd \nu \\
&\quad \lesssim  \sum_{m=2}^M  \int_{t_{m-2}}^{t_m} \mathbb{E} \left[   \norm{u_\nu - v_{m-2}}_{L^2_x}^2 \right] \dd \nu.
\end{align*}
Decomposition in time and space error, applying Lemma~\ref{lem:l2-stab} and Lemma~\ref{lem:approximation quality} and the estimate~\eqref{eq:init_error}
\begin{align}
\begin{aligned} \label{eq:est1}
&\sum_{m=2}^M  \int_{t_{m-2}}^{t_m} \mathbb{E} \left[   \norm{u_\nu - v_{m-2}}_{L^2_x}^2 \right] \dd \nu\\
&\quad \lesssim   \sum_{m=3}^M  \int_{t_{m-2}}^{t_m}  \mathbb{E} \left[   \norm{u_\nu - \mean{u}_{m-2}}_{L^2_x}^2 \right] \dd \nu + \tau \sum_{m=3}^M    \mathbb{E} \left[   \norm{\mean{u}_{m-2}- \Pi_2 \mean{u}_{m-2}}_{L^2_x}^2 \right]   \\
&\quad \quad + \tau\sum_{m=3}^M  \mathbb{E} \left[   \norm{  \Pi_2 \mean{u}_{m-2} - v_{m-2}}_{L^2_x}^2 \right] + \int_0^{2\tau} \mathbb{E} \left[ \norm{u_\nu - v_0}_{L^2_x}^2 \right] \dd \nu  \\
&\quad \lesssim \tau  \mathbb{E} \left[ \seminorm{u}_{ B^{1/2}_{2,\infty} L^2_x}^2 \right] +  h^2 \mathbb{E} \left[   \norm{  \nabla u }_{L^2_t L^2_x}^2 \right]  + \tau\sum_{m=3}^M  \mathbb{E} \left[   \norm{  \Pi_2 e_{m-2} }_{L^2_x}^2 \right]\\
&\quad\quad  + \tau \left( \mathbb{E} \left[ \norm{u}_{L^\infty_t L^2_x}^2  \right] + \mathbb{E} \left[ \norm{ u_0}_{L^2_x}^2 \right] \right).
\end{aligned}
\end{align}

Next, we analyze the second term~$J_{2,b}$ in the upper bound for $J_2$. Define the discrete real-valued stochastic process
\begin{align} \label{def:Mart_not}
K_{m^*} &:=  \sum_{m=2}^{m^*} \left( \dashint_{I_m}  \int_{t-\tau}^t [G(u_\nu)  - G(v_{m-2})] \dd W_\nu \dt, \Pi_2 e_{m-2}  \right).
\end{align}
It is convenient to use stochastic Fubini's theorem to rewrite
\begin{align*}
K_{m^*} = \sum_{m=2}^{m^*} \left( \int_{t_{m-2}}^{t_m} a_{m-1}(\nu) [G(u_\nu)  - G(v_{m-2})] \dd W_\nu, \Pi_2 e_{m-2}  \right).
\end{align*}

In the following we abbreviate $\mathcal{F}_m:= \mathcal{F}_{t_m}$. Note, \eqref{def:Mart_not} does not define a martingale with respect to $\mathcal{F}_{t_m}$. In fact, the discrepancy of not being a martingale can be quantified. The general strategy is to split up the sum into a martingale and an error term. The error term is called compensator. To determine how the compensator looks, we first compute the conditional expectations of $K_M$ with respect to $\mathcal{F}_{m^*}$, i.e.,
\begin{align*}
&\mathbb{E} \left[K_M \big| \mathcal{F}_{m^*} \right] = \mathbb{E} \left[\sum_{m = m^* + 2}^M  \left( \int_{t_{m-2}}^{t_m} a_{m-1}(\nu) [G(u_\nu)  - G(v_{m-2})] \dd W_\nu, \Pi_2 e_{m-2}  \right) \Big| \mathcal{F}_{m^*} \right] \\
&\quad \quad + \mathbb{E} \left[ \left( \int_{t_{m^*-1}}^{t_{m^* + 1}} a_{m^*}(\nu) [G(u_\nu)  - G(v_{m^*-1})] \dd W_\nu, \Pi_2 e_{m^*-1}  \right) \right] + \mathbb{E} \left[ K_{m^*}\big| \mathcal{F}_{m^*} \right] \\
&\quad =:\mathcal{M}_1 + \mathcal{M}_2 +\mathcal{M}_3.
\end{align*}
Due to the tower property of conditional expectation, the measurability of $e_m$ with respect to $\mathcal{F}_m$ together with the martingale property of the stochastic integral
\begin{align*}
\mathcal{M}_1 &= \sum_{m = m^* + 2}^M \mathbb{E} \left[  \mathbb{E} \left[\left(  \int_{t_{m-2}}^{t_m} a_{m-1}(\nu) [G(u_\nu)  - G(v_{m-2})] \dd W_\nu , \Pi_2 e_{m-2}  \right) \Big| \mathcal{F}_{m-2} \right] \Big| \mathcal{F}_{m^*} \right] \\
&= \sum_{m = m^* + 2}^M \mathbb{E} \left[  \left(  \mathbb{E} \left[ \int_{t_{m-2}}^{t_m} a_{m-1}(\nu) [G(u_\nu)  - G(v_{m-2})] \dd W_\nu  \Big| \mathcal{F}_{m-2} \right], \Pi_2 e_{m-2}  \right) \Big| \mathcal{F}_{m^*} \right] \\
&=0.
\end{align*}
Again using the $\mathcal{F}_m$-measurability of $e_m$, we conclude that $K_{m^*}$ is $\mathcal{F}_{m^*}$-measurable. Thus,
\begin{align*}
\mathcal{M}_3 &= K_{m^*}.
\end{align*}
It remains to compute the conditional expectation in~$\mathcal{M}_2$. Since $t_{m^*}$ is an interior point of $I_{m^*+1} \cup I_{m^*} $ we split up the stochastic integral into a part that only sees values above the threshold $t_{m^*}$ and into a lower part that only sees values below the threshold $t_{m^*}$, i.e.,
\begin{align*}
\mathcal{M}_2 &= \mathbb{E} \left[ \left( \int_{t_{m^*}}^{t_{m^* + 1}} a_{m^*}(\nu)  [G(u_\nu)  - G(v_{m^*-1})] \dd W_\nu , \Pi_2 e_{m^*-1}  \right)\big| \mathcal{F}_{m^*} \right]\\
&+\mathbb{E} \left[ \left( \int_{t_{m^*-1}}^{t_{m^*}} a_{m^*}(\nu) [G(u_\nu)  - G(v_{m^*-1})] \dd W_\nu , \Pi_2 e_{m^*-1}  \right)\big| \mathcal{F}_{m^*} \right].
\end{align*}
The first vanishes due to the martingale property of the stochastic integral, while the second is measurable with respect to $\mathcal{F}_{m^*}$. Overall,
\begin{align*}
\mathcal{M}_2 &= \left( \int_{t_{m^*-1}}^{t_{m^*}} a_{m^*}(\nu) [G(u_\nu)  - G(v_{m^*-1})] \dd W_\nu , \Pi_2 e_{m^*-1}  \right).
\end{align*}
$\mathcal{M}_2$ is called compensator and quantifies the error of not being a martingale, i.e.,
\begin{align} \label{eq:Mart_not}
\mathbb{E} \left[K_M \big| \mathcal{F}_{m^*} \right]-K_{m^*} = \mathcal{M}_2.
\end{align}

Furthermore, increments of the discrete stochastic process $K$ satisfy 
\begin{align}\label{eq:inc-K}
K_{m^*} - K_{{m^*}-1} &= \left( \int_{t_{m^*-2}}^{t_{m^*}} a_{m^*-1}(\nu) [G(u_\nu)  - G(v_{{m^*}-2})] \dd W_\nu , \Pi_2 e_{{m^*}-2}  \right).
\end{align}
\eqref{eq:Mart_not} together with \eqref{eq:inc-K} allow to identify increments of the conditional expectations, 
\begin{align*}
&\mathbb{E} \left[K_M \big| \mathcal{F}_{m^*} \right] - \mathbb{E} \left[K_M \big| \mathcal{F}_{{m^*}-1} \right] \\
&\quad =\left( \int_{t_{m^*-1}}^{t_{m^*}} a_{m^*}(\nu) [G(u_\nu)  - G(v_{m^*-1})] \dd W_\nu , \Pi_2 e_{m^*-1}  \right) \\
&\quad \quad + \left( \int_{t_{m^*-1}}^{t_{m^*}} a_{m^*-1}(\nu) [G(u_\nu)  - G(v_{{m^*}-2})] \dd W_\nu , \Pi_2 e_{{m^*}-2}  \right).
\end{align*}
Observe $\mathbb{E}\left[ K_M \big| \mathcal{F}_1 \right] = 0$, since $\Pi_2 e_0 = 0$. The Burkholder-Davis-Gundy's inequality implies
\begin{align*}
&\mathbb{E} \left[\max_{{m^*} \in \set{2,\ldots,M}} \abs{ \mathbb{E}\left[K_M \big| \mathcal{F}_{m^*} \right]  }\right] \lesssim \mathbb{E} \left[ \left( \sum_{m=2}^M \left( \mathbb{E}\left[K_M \big| \mathcal{F}_m \right] - \mathbb{E}\left[K_M \big| \mathcal{F}_{m-1} \right] \right)^2 \right)^\frac{1}{2}  \right]  \\
&\quad \lesssim \mathbb{E} \left[ \left( \sum_{m=2}^M \norm{ \int_{t_{m-1}}^{t_m} a_m(\nu) [G(u_\nu)  - G(v_{m-1})] \dd W_\nu }_{L^2_x}^2 \norm{ \Pi_2 e_{m-1} }_{L^2_x}^2\right)^\frac{1}{2} \right] \\
&\quad +\mathbb{E} \left[ \left( \sum_{m=2}^M \norm{ \int_{t_{m-1}}^{t_m} a_{m-1}(\nu)[G(u_\nu)  - G(v_{m-2})] \dd W_\nu }_{L^2_x}^2 \norm{ \Pi_2 e_{m-2}  }_{L^2_x}^2 \right)^\frac{1}{2} \right].
\end{align*}

Now Young's inequality, It\^o isometry and the Lipschitz condition~\eqref{ass:Lipschitz} imply
\begin{align*}
&\mathbb{E} \left[\max_{{m^*} \in \set{2,\ldots,M}} \mathbb{E}\left[K_M \big| \mathcal{F}_{m^*} \right]  \right] \\
&\quad \lesssim \varepsilon \mathbb{E}\left[\max_{{m^*} \in \set{2,\ldots,M}} \norm{\Pi_2 e_{m^*}}_{L^2_x}^2  +  \norm{\Pi_2 e_1}_{L^2_x}^2  \right] \\
&\quad + c_\varepsilon \mathbb{E} \left[ \sum_{m=2}^M \int_{t_{m-1}}^{t_m} \norm{G(u_\nu)  - G(v_{m-2})}_{L_2(U;L^2_x)}^2\dd \nu \right] \\
&\quad +c_\varepsilon\mathbb{E} \left[ \sum_{m=1}^M \int_{t_{m-1}}^{t_m}  \norm{G(u_\nu)  - G(v_{m-1})}_{L_2(U;L^2_x)}^2 \dd \nu  \right]\\
&\lesssim \varepsilon \mathbb{E}\left[\max_{{m^*} \in \set{2,\ldots,M}} \norm{\Pi_2 e_{m^*}}_{L^2_x}^2   +  \norm{\Pi_2 e_1}_{L^2_x}^2  \right] \\
&\quad + c_\varepsilon \mathbb{E} \left[ \sum_{m=2}^M \int_{t_{m-1}}^{t_m} \norm{u_\nu  - v_{m-2}}_{L^2_x}^2 \dd \nu + \sum_{m=1}^M \int_{t_{m-1}}^{t_m} \norm{u_\nu  - v_{m-1}}_{L^2_x}^2 \dd \nu  \right].
\end{align*}
The second term is estimated as in \eqref{eq:est1}
\begin{align*}
 &\mathbb{E} \left[ \sum_{m=2}^M \int_{t_{m-1}}^{t_m} \norm{u_\nu  - v_{m-2}}_{L^2_x}^2 \dd \nu + \sum_{m=1}^M \int_{t_{m-1}}^{t_m} \norm{u_\nu  - v_{m-1}}_{L^2_x}^2 \dd \nu  \right]\\
&\lesssim \tau  \mathbb{E} \left[ \seminorm{u}_{ B^{1/2}_{2,\infty} L^2_x}^2 \right] +  h^2 \mathbb{E} \left[   \norm{  \nabla u }_{L^2_t L^2_x}^2 \right]  + \tau\sum_{m=3}^M  \mathbb{E} \left[   \norm{  \Pi_2 e_{m-2} }_{L^2_x}^2 \right]\\
&\quad + \tau \left( \mathbb{E} \left[ \norm{u}_{L^\infty_t L^2_x}^2  \right] + \mathbb{E} \left[ \norm{ u_0}_{L^2_x}^2 \right] \right).
\end{align*}
Similarly, due to \eqref{eq:Mart_not}, H\"older's inequality, Young's inequality and $\ell^1 \hookrightarrow \ell^\infty$
\begin{align*}
&\mathbb{E} \left[\max_{ {m^*} \in \set{2,\ldots,M}} \left( K_{m^*} - \mathbb{E}\left[K_M \big| \mathcal{F}_{m^*} \right]  \right) \right] \\
&= \mathbb{E} \left[\max_{{m^*} \in \set{2,\ldots,M}}  \left( \int_{t_{m^*-1}}^{t_{m^*}} a_{m^*}(\nu) [G(u_\nu)  - G(v_{m^*-1})] \dd W_\nu , \Pi_2 e_{m^*-1}  \right)\right] \\
&\leq \mathbb{E} \left[\max_{{m^*} \in \set{2,\ldots,M}} \norm{ \int_{t_{m^*-1}}^{t_{m^*}} a_{m^*}(\nu) [G(u_\nu)  - G(v_{m^*-1})] \dd W_\nu}_{L^2_x} \norm{ \Pi_2 e_{{m^*}-1} }_{L^2_x}\right] \\
&\leq \varepsilon  \mathbb{E} \left[\max_{{m^*} \in \set{2,\ldots,M}}  \norm{ \Pi_2 e_{{m^*}-1} }_{L^2_x}^2\right] \\
&\quad + c_\varepsilon  \mathbb{E} \left[\max_{{m^*} \in \set{2,\ldots,M}} \norm{ \int_{t_{m^*-1}}^{t_{m^*}} a_{m^*}(\nu) [G(u_\nu)  - G(v_{m^*-1})] \dd W_\nu}_{L^2_x}^2\right]  \\
&\leq \varepsilon  \mathbb{E} \left[\max_{{m^*} \in \set{2,\ldots,M}}  \norm{ \Pi_2 e_{{m^*}} }_{L^2_x}^2  +\norm{ \Pi_2 e_{1} }_{L^2_x}^2  \right] \\
&\quad + c_\varepsilon  \mathbb{E} \left[\sum_{m=2}^M \norm{   \int_{t_{m-1}}^{t_{m}} a_{m}(\nu) [G(u_\nu)  - G(v_{m-1})] \dd W_\nu}_{L^2_x}^2\right].
\end{align*}
Together we can estimate
\begin{align*}
J_{2,b} &= \mathbb{E} \left[\max_{{m^*} \in \set{2,\ldots,M}} K_{m^*} \right] \\
&= \mathbb{E} \left[\max_{{m^*} \in \set{2,\ldots,M}} \left( K_{m^*} - \mathbb{E}\left[K_M \big| \mathcal{F}_{m^*} \right]  + \mathbb{E}\left[K_M \big| \mathcal{F}_{m^*} \right] \right) \right] \\
&\leq \mathbb{E} \left[\max_{{m^*} \in \set{2,\ldots,M}} \left( K_{m^*} - \mathbb{E}\left[K_M \big| \mathcal{F}_{m^*} \right]  \right) \right] + \mathbb{E} \left[\max_{{m^*} \in \set{2,\ldots,M}} \mathbb{E}\left[K_M \big| \mathcal{F}_{m^*} \right]  \right] \\
&\lesssim \varepsilon \mathbb{E}\left[\max_{{m^*} \in \set{2,\ldots,M}} \norm{\Pi_2 e_{m^*}}_{L^2_x}^2 +  \norm{\Pi_2 e_1}_{L^2_x}^2 \right] +c_\varepsilon \mathbb{E} \left[\tau \sum_{m=2}^M \norm{\Pi_2 e_{m-1}}_{L^2_x}^2 \right] \\
&\quad + c_\varepsilon \left( \tau  \mathbb{E} \left[ \seminorm{u}_{ B^{1/2}_{2,\infty} L^2_x}^2 \right] +  h^2 \mathbb{E} \left[   \norm{  \nabla u }_{L^2_t L^2_x}^2 \right]  + \tau  \mathbb{E} \left[ \norm{u}_{L^\infty_t L^2_x}^2  \right] + \tau \mathbb{E} \left[ \norm{ u_0}_{L^2_x}^2 \right] \right).
\end{align*}
This concludes the bound for $J_2$ in~\eqref{eq:big_est}
\begin{align} \label{eq:estimate-J2}
\begin{aligned}
J_2 &\lesssim \varepsilon \mathbb{E}\left[\max_{ {m^*} \in \set{2,\ldots,M}} \norm{\Pi_2 e_{m^*}}_{L^2_x}^2 \right] + \varepsilon\mathbb{E} \left[ \sum_{m=2}^M  \norm{\Pi_2 (e_m- e_{m-1})}_{L^2_x}^2  \right] + \varepsilon J_1 \\
&\quad + c_\varepsilon \left( \tau  \mathbb{E} \left[ \seminorm{u}_{ B^{1/2}_{2,\infty} L^2_x}^2 \right] +  h^2 \mathbb{E} \left[   \norm{  \nabla u }_{L^2_t L^2_x}^2 \right]  + \tau  \mathbb{E} \left[ \norm{u}_{L^\infty_t L^2_x}^2  \right] + \tau \mathbb{E} \left[ \norm{ u_0}_{L^2_x}^2 \right] \right) \\
&\quad + c_\varepsilon \mathbb{E} \left[ \tau \sum_{m=2}^M  \norm{\Pi_2 e_m}_{L^2_x}^2 \right].
\end{aligned}
\end{align}

\textbf{Step 3:} In this step we estimate the nonlinear gradient. Jensen's inequality, the generalized Young's inequality~\eqref{eq:gen-young} and the nonlinear stability result Proposition~\ref{prop:non-stability} imply
\begin{align*}
J_3 &= \mathbb{E} \left[ \sum_{m=2}^M \dashint_{I_m}  \int_{t-\tau}^t  \abs{ \left(  S(\nabla u_\nu)  - S(\nabla v_m) , \nabla \Pi_2 \mean{u}_m - \nabla u_\nu \right) } \dd \nu \dt\right] \\
&\leq \mathbb{E}\left[ \sum_{m=2}^M  \dashint_{I_m} \dashint_{I_m}  \int_{t-\tau}^t  \abs{ \left(  S(\nabla u_\nu)  - S(\nabla v_m) , \nabla \Pi_2 u_{\nu_2} - \nabla u_\nu \right) }  \dd \nu_2 \dd \nu \dt \right] \\
&\leq \varepsilon \mathbb{E} \left[ \sum_{m=2}^M \dashint_{I_m}  \int_{t-\tau}^t  \norm{V(\nabla u_\nu) - V(\nabla v_m)}_{L^2_x}^2 \dd \nu \dt\right] \\
&\quad+ c_\varepsilon \mathbb{E} \left[ \sum_{m=2}^M \dashint_{I_m}  \int_{t-\tau}^t  \norm{V(\nabla u_\nu) - V(\nabla \Pi_2 u_\nu)}_{L^2_x}^2 \dd \nu \dt \right]\\
&\quad+ c_\varepsilon \mathbb{E} \left[ \sum_{m=2}^M  \dashint_{I_m}\dashint_{I_m}  \int_{t-\tau}^t  \norm{ V(\nabla  u_\nu) -  V(\nabla u_{\nu_2})}_{L^2_x}^2 \dd \nu_2 \dd \nu \dt \right]\\
&\leq \varepsilon \mathbb{E} \left[ \sum_{m=2}^M \dashint_{I_m}  \int_{t-\tau}^t  \norm{V(\nabla u_\nu) - V(\nabla v_m)}_{L^2_x}^2 \dd \nu \dt\right] \\
&\quad + h^2 c_\varepsilon \mathbb{E} \left[ \norm{\nabla V(\nabla u)}_{L^2_t L^2_x}^2 \right] + \tau c_\varepsilon \mathbb{E}\left[ \seminorm{V(\nabla u)}_{ B^{1/2}_{2,\infty} L^2_x}^2\right].
\end{align*}

\textbf{Step 4:} We aim at applying a Gronwall type argument. Collecting all estimates we get
\begin{align*}
&(1- \varepsilon) \mathbb{E} \left[ \max_{ {m^*} \in \set{1,\ldots,M}} \norm{\Pi_2 e_{m^*}}_{L^2_x}^2 \right] + (1-\varepsilon)\mathbb{E} \left[\sum_{m=1}^M \norm{\Pi_2[ e_m- e_{m-1}] }_{L^2_x}^2 \right] \\
&\quad + (1-\varepsilon) \mathbb{E} \left[\sum_{m=1}^M \dashint_{I_m}  \int_{{t-\tau} \vee 0}^t  \norm{V(\nabla u_\nu) - V(\nabla v_m)}_2^2 \dd \nu \dt \right] \\
&\lesssim  c_\varepsilon \mathbb{E} \left[ \sum_{m=1}^M \tau  \norm{\Pi_2 e_m}_{L^2_x}^2 \right] + c_\varepsilon \left( \tau \mathbb{E} \left[ \seminorm{V(\nabla u)}_{B^{1/2}_{2,\infty} L^2_x}^2 \right]  + h^2 \mathbb{E} \left[ \norm{\nabla V(\nabla u)}_{L^2_t L^2_x}^2 \right] \right) \\
&\quad +c_\varepsilon \left( \tau  \mathbb{E} \left[ \seminorm{u}_{ B^{1/2}_{2,\infty} L^2_x}^2 \right] +  h^2 \mathbb{E} \left[   \norm{  \nabla u }_{L^2_t L^2_x}^2 \right]  + \tau  \mathbb{E} \left[ \norm{u}_{L^\infty_t L^2_x}^2  \right] + \tau \mathbb{E} \left[ \norm{ u_0}_{L^2_x}^2 \right] \right).
\end{align*}
Choosing $\varepsilon$ sufficiently small and applying Gronwall's Lemma ensures
\begin{align*}
\mathbb{E} \left[ \max_{ {m^*} \in \set{1,\ldots,M}} \norm{\Pi_2 e_{m^*}}_{L^2_x}^2 \right] \lesssim  e^{cT} \left( \tau  + h^2 \right).
\end{align*}
This implies
\begin{align}\label{eq:step4}
\mathbb{E} \left[ \max_{ {m^*} \in \set{1,\ldots,M}} \norm{\Pi_2 e_{m^*}}_{L^2_x}^2 +\sum_{m=1}^M \dashint_{I_m}  \int_{{t-\tau} \vee 0}^t  \norm{V(\nabla u_\nu) - V(\nabla v_m)}_{L^2_x}^2 \dd \nu \dt \right] \lesssim \tau + h^2.
\end{align}

\textbf{Step 5:} We artificially introduce the desired error quantities and use the stability of mean-value time projections and the $L^2$-space projection. Let us denote $\mean{f}_{a_m} := \dashint_{I_m} \dashint_{t-\tau \vee 0}^{t} f_\nu \dd \nu \dt $. 
A slight modification of Lemma~\ref{lem:approximation quality} implies
\begin{align*}
\mathbb{E} \left[\sum_{m=1}^M \int_{I_m} \norm{V(\nabla u_\nu) - \mean{V(\nabla u)}_{a_m} }_{L^2_x}^2 \dd \nu  \right] &\lesssim \tau \mathbb{E} \left[ \seminorm{V(\nabla u)}_{B^{1/2}_{2,\infty} L^2_x}^2 \right].
\end{align*}
Additionally, Jensen's inequality ensures
\begin{align*}
&\mathbb{E} \left[\sum_{m=1}^M \tau \norm{\mean{V(\nabla u)}_{a_m} - V(\nabla v_m)}_{L^2_x}^2 \right] \\
&\quad \leq \mathbb{E} \left[\sum_{m=1}^M \dashint_{I_m}  \int_{{t-\tau} \vee 0}^t  \norm{V(\nabla u_\nu) - V(\nabla v_m)}_{L^2_x}^2 \dd \nu \dt \right].
\end{align*}
Therefore, invoking \eqref{eq:step4},
\begin{align*}
 &\mathbb{E} \left[\sum_{m=1}^M \int_{I_m} \norm{V(\nabla u_\nu) - V(\nabla v_m)}_{L^2_x}^2 \dd \nu  \right] \\
 &\hspace{3em}\lesssim \mathbb{E} \left[\sum_{m=1}^M \int_{I_m} \norm{V(\nabla u_\nu) - \mean{V(\nabla u)}_{a_m} }_{L^2_x}^2 \dd \nu  \right]  \\
 &\hspace{3em}\quad + \mathbb{E} \left[\sum_{m=1}^M \tau \norm{\mean{V(\nabla u)}_{a_m} - V(\nabla v_m)}_{L^2_x}^2 \right] \\
 &\hspace{3em}\lesssim \tau \mathbb{E} \left[ \seminorm{V(\nabla u)}_{B^{1/2}_{2,\infty} L^2_x}^2 \right] + \tau + h^2.
\end{align*}
The assertion~\eqref{eq:Rates} follows by an application of Lemma~\ref{lem:l2-stab}
\begin{align*}
&\mathbb{E} \left[ \max_{m \in \set{1,\ldots,M}} \norm{\mean{u}_m - v_m}_{L^2_x}^2 \right] \\
&\quad \lesssim \mathbb{E} \left[ \max_{m \in \set{1,\ldots,M}} \norm{\Pi_2 e_m}_{L^2_x}^2 \right] + \mathbb{E} \left[ \max_{m \in \set{1,\ldots,M}} \norm{\mean{u}_m - \Pi_2 \mean{u}_m }_{L^2_x}^2 \right] \\
&\quad \lesssim \tau + h^2 + h^2 \mathbb{E} \left[ \norm{\nabla u}_{L^\infty_t L^2_x}^2 \right].
\end{align*}

\textbf{Part \ref{en:Convergence2}:} Now, let us assume $u \in L^2_\omega B^{1/2}_{\Phi_2,\infty} L^2_x$. We apply Lemma~\ref{lem:exponential_stability} to bound
\begin{align*}
&\mathbb{E}\left[\max_{m \in \set{1,\ldots,M}} \norm{u(t_m) - v_m}_{L^2_x}^2 \right] \\
&\quad \lesssim \mathbb{E}\left[\max_{m \in \set{1,\ldots,M}} \norm{\mean{u}_m - v_m}_{L^2_x}^2 \right] + \mathbb{E}\left[\max_{m \in \set{1,\ldots,M}} \norm{u(t_m) - \mean{u}_m}_{L^2_x}^2 \right] \\
&\quad \lesssim \tau + h^2 + \tau \ln(1+\tau^{-1}).
\end{align*}
This verifies \eqref{eq:Rates2} and the proof is finished.
\end{proof}


One can also use time averages on the nonlinear gradient term to measure the error of the approximation.
\begin{corollary}\label{cor:aver_conv}
Let the assumptions of Theorem~\ref{thm:Convergence}~\ref{en:Convergence1} be satisfied. Then
\begin{align} \label{eq:aver_conv1}
\mathbb{E}\left[ \sum_{m=1}^M \tau \norm{\mean{V(\nabla u)}_m - V(\nabla v_m)}_{L^2_x}^2 \right] \lesssim \tau + h^2
\end{align}
and
\begin{align}\label{eq:aver_conv2}
\mathbb{E}\left[ \sum_{m=1}^M \tau \norm{V(\nabla \mean{u}_m) - V(\nabla v_m)}_{L^2_x}^2 \right] \lesssim \tau + h^2.
\end{align}
\end{corollary}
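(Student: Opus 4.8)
The plan is to obtain both estimates directly from the bound \eqref{eq:Rates} already established in Theorem~\ref{thm:Convergence}~\ref{en:Convergence1}, which controls $\mathbb{E}[\sum_{m=1}^M\int_{I_m}\norm{V(\nabla u_\nu)-V(\nabla v_m)}_{L^2_x}^2\dd\nu]$ by $\tau+h^2$. First I would dispatch \eqref{eq:aver_conv1}, which is the elementary one: since $V(\nabla v_m)$ is constant on $I_m$ and $\mean{V(\nabla u)}_m=\dashint_{I_m}V(\nabla u_\nu)\dd\nu$, Jensen's inequality gives
\begin{align*}
\tau\norm{\mean{V(\nabla u)}_m-V(\nabla v_m)}_{L^2_x}^2\leq\int_{I_m}\norm{V(\nabla u_\nu)-V(\nabla v_m)}_{L^2_x}^2\dd\nu.
\end{align*}
Summation over $m$ and taking the expectation reduce \eqref{eq:aver_conv1} to \eqref{eq:Rates}.

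For \eqref{eq:aver_conv2} the genuinely new point is that $V(\nabla\mean{u}_m)$ applies the nonlinear map $V$ to the time-averaged gradient, which is not the same as the averaged nonlinear quantity $\mean{V(\nabla u)}_m$. I would first note that the spatial gradient commutes with the time average, so $\nabla\mean{u}_m=\dashint_{I_m}\nabla u_\nu\dd\nu$ and hence $V(\nabla\mean{u}_m)$ is again constant on $I_m$. The triangle inequality then yields
\begin{align*}
\tau\norm{V(\nabla\mean{u}_m)-V(\nabla v_m)}_{L^2_x}^2&\lesssim\int_{I_m}\norm{V(\nabla\mean{u}_m)-V(\nabla u_\nu)}_{L^2_x}^2\dd\nu\\
&\quad+\int_{I_m}\norm{V(\nabla u_\nu)-V(\nabla v_m)}_{L^2_x}^2\dd\nu,
\end{align*}
whose second term is again controlled by \eqref{eq:Rates}. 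Thus the whole statement hinges on bounding $\mathbb{E}[\sum_m\int_{I_m}\norm{V(\nabla\mean{u}_m)-V(\nabla u_\nu)}_{L^2_x}^2\dd\nu]$ by $\tau$.

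This last bound is the step I expect to be the main obstacle, because convexity alone is not enough: $V(\nabla\mean{u}_m)$ is \emph{not} the $L^2$-best constant-in-time approximation of $V(\nabla u)$ on $I_m$ (that role is played by $\mean{V(\nabla u)}_m$), so one cannot simply appeal to Jensen's inequality. The tool I would invoke is the best-approximation equivalence from the theory of shifted $N$-functions associated with $\varphi$ (in the spirit of the $V$-coercivity of Lemma~\ref{lem:V-coercive} and \cite{DieForTomWan20}), applied pointwise in $x$ on the interval $I_m$:
\begin{align*}
\dashint_{I_m}\abs{V(\nabla u(\nu,x))-V(\nabla\mean{u}_m(x))}^2\dd\nu\eqsim\dashint_{I_m}\abs{V(\nabla u(\nu,x))-\mean{V(\nabla u)}_m(x)}^2\dd\nu,
\end{align*}
i.e. $V$ of the time average and the time average of $V$ are comparable oscillation representatives. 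Integrating in $x$ and multiplying by $\tau$ turns the left-hand side into the target quantity and the right-hand side into $\sum_m\int_{I_m}\norm{V(\nabla u_\nu)-\mean{V(\nabla u)}_m}_{L^2_x}^2\dd\nu$, which Lemma~\ref{lem:approximation quality} (with $\alpha=1/2$ and $r=2$) bounds by $\tau\,\mathbb{E}[\seminorm{V(\nabla u)}_{B^{1/2}_{2,\infty}L^2_x}^2]\lesssim\tau$. Taking expectations and adding the contribution of \eqref{eq:Rates} then closes \eqref{eq:aver_conv2} at the rate $\tau+h^2$; no separate Gronwall step is needed, since the self-referential error terms were already absorbed when proving Theorem~\ref{thm:Convergence}.
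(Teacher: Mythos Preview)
Your argument for \eqref{eq:aver_conv1} is identical to the paper's: Jensen's inequality reduces it immediately to \eqref{eq:Rates}.

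For \eqref{eq:aver_conv2} your route is correct but differs from the paper's. You split via the triangle inequality, control one half by \eqref{eq:Rates}, and for the oscillation term $\sum_m\int_{I_m}\norm{V(\nabla\mean{u}_m)-V(\nabla u_\nu)}_{L^2_x}^2\dd\nu$ you invoke the best-approximation equivalence
\[
\dashint_{I_m}\abs{V(\nabla u_\nu)-V(\nabla\mean{u}_m)}^2\dd\nu\;\eqsim\;\dashint_{I_m}\abs{V(\nabla u_\nu)-\mean{V(\nabla u)}_m}^2\dd\nu
\]
(this is precisely the content of the Remark following Lemma~\ref{lem:equiv-error-quant}, citing \cite[Lemma~6.2]{MR2847446}) together with Lemma~\ref{lem:approximation quality} and the assumed Nikolskii regularity of $V(\nabla u)$. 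The paper instead argues more directly: by Lemma~\ref{lem:V-coercive},
\[
\tau\norm{V(\nabla\mean{u}_m)-V(\nabla v_m)}_{L^2_x}^2\eqsim\int_{I_m}\int_{\mathcal{O}}\bigl(S(\nabla\mean{u}_m)-S(\nabla v_m)\bigr):(\nabla u_\nu-\nabla v_m)\dx\dd\nu,
\]
where the average has been pulled inside using $\nabla\mean{u}_m-\nabla v_m=\dashint_{I_m}(\nabla u_\nu-\nabla v_m)\dd\nu$; then Lemma~\ref{lem:gen-young} with $\delta=\tfrac12$ gives a self-absorbing bound by $\sum_m\int_{I_m}\norm{V(\nabla u_\nu)-V(\nabla v_m)}_{L^2_x}^2\dd\nu$, and \eqref{eq:Rates} finishes. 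The paper's argument is slightly more economical in that it feeds only on \eqref{eq:Rates} and needs neither the external oscillation equivalence nor a second explicit appeal to the $B^{1/2}_{2,\infty}$ regularity; your approach, on the other hand, makes transparent that the difference between $V(\nabla\mean{u}_m)$ and $\mean{V(\nabla u)}_m$ is itself of order $\tau^{1/2}$ in $L^2$, which is informative in its own right.
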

\begin{proof}
The estimate~\eqref{eq:aver_conv1} immediately follows by an application of Jensen's inequality and the bound \eqref{eq:Rates},
\begin{align*}
\mathbb{E}\left[ \sum_{m=1}^M \tau \norm{\mean{V(\nabla u)}_m - V(\nabla v_m)}_{L^2_x}^2 \right] &\leq \mathbb{E}\left[ \sum_{m=1}^M \int_{I_m} \norm{V(\nabla u_\nu) - V(\nabla v_m)}_{L^2_x}^2 \dd \nu \right] \\
&\lesssim \tau + h^2.
\end{align*}
In order to prove the second estimate~\eqref{eq:aver_conv2} we use Lemma~\ref{lem:V-coercive} and Lemma~\ref{lem:gen-young}
\begin{align*}
&\mathbb{E}\left[ \sum_{m=1}^M \tau \norm{V(\nabla \mean{u}_m) - V(\nabla v_m)}_{L^2_x}^2 \right] \\
&\hspace{3em} \eqsim \mathbb{E}\left[ \sum_{m=1}^M \int_{I_m}  \int_\mathcal{O} (S(\nabla \mean{u}_m) - S(\nabla v_m) ): (\nabla u_\nu - \nabla v_m) \dx \dd \nu \right] \\
&\hspace{3em} \leq c \mathbb{E}\left[ \sum_{m=1}^M \int_{I_m} \norm{V(\nabla u_\nu) - V(\nabla v_m)}_{L^2_x}^2 \dd \nu \right] \\
&\hspace{3em} \quad + \frac{1}{2} \mathbb{E}\left[ \sum_{m=1}^M \tau \norm{V(\nabla \mean{u}_m) - V(\nabla v_m)}_{L^2_x}^2 \right].
\end{align*}
Absorbing the second term to the left hand side and applying the bound~\eqref{eq:Rates} verifies the assertion.
\end{proof}

\begin{remark}
Although both \eqref{eq:aver_conv1} and \eqref{eq:aver_conv2} enjoy the same convergence rates, it is not clear whether one dominates the other. In the linear case, $p=2$, the terms coincide. 
\end{remark}

The averaged error quantities~\eqref{eq:aver_conv1} and \eqref{eq:aver_conv2} are equivalent up to oscillation to the error quantity~\eqref{eq:Rates}.
\begin{lemma} \label{lem:equiv-error-quant}
Let $u \in L^p_t W^{1,p}_x$ and $A \in L^p_x$. Then
\begin{align} \label{eq:equiv-error-quant1}
\begin{aligned}
&\dashint_{I_m} \norm{V( \nabla u_\nu) - V(A)}_{L^2_x}^2  \dd \nu \\
&\hspace{3em}= \dashint_{I_m} \norm{V( \nabla u_\nu ) - \mean{V(\nabla u)}_{m} }_{L^2_x}^2 \dd \nu +  \norm{\mean{V(\nabla u)}_{m}  - V(A)}_{L^2_x}^2
\end{aligned}
\end{align}
and
\begin{align}\label{eq:equiv-error-quant2}
\begin{aligned}
&\dashint_{I_m} \norm{V( \nabla u_\nu) - V(A)}_{L^2_x}^2  \dd \nu \\
&\hspace{3em}\lesssim \dashint_{I_m} \norm{V( \nabla u_\nu ) - V(\nabla \mean{u}_{m})  }_{L^2_x}^2 \dd \nu +  \norm{V(\nabla \mean{u }_{m}) - V(A)}_{L^2_x}^2.
\end{aligned}
\end{align}
\end{lemma}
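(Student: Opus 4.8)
The plan is to handle the two statements separately, since both reduce to elementary Hilbert-space manipulations in $L^2_x$; the only preliminary point is that the integrability hypotheses $u \in L^p_t W^{1,p}_x$ and $A \in L^p_x$ guarantee that $V(\nabla u_\nu)$, $V(\nabla \mean{u}_m)$ and $V(A)$ all lie in $L^2_x$. This is because $\abs{V(\xi)}^2 = (\kappa + \abs{\xi})^{p-2}\abs{\xi}^2 \eqsim \abs{\xi}^p$ for large $\abs{\xi}$, while $\nabla \mean{u}_m \in L^p_x$ by Jensen's inequality; hence all three squared $L^2_x$-norms appearing in \eqref{eq:equiv-error-quant1}--\eqref{eq:equiv-error-quant2} are finite and the computations below are justified.

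For the exact identity \eqref{eq:equiv-error-quant1} I would exploit that $\mean{V(\nabla u)}_m = \dashint_{I_m} V(\nabla u_\nu) \dd \nu$ is literally the time-average of $V(\nabla u)$ over $I_m$, i.e. the $L^2_x$-valued orthogonal projection onto functions constant in $\nu$; the claimed decomposition is then the corresponding Pythagorean identity. Concretely, I would split
\[
V(\nabla u_\nu) - V(A) = \big(V(\nabla u_\nu) - \mean{V(\nabla u)}_m\big) + \big(\mean{V(\nabla u)}_m - V(A)\big),
\]
expand the squared $L^2_x$-norm, and average in $\nu$ over $I_m$. The cross term is
\[
2\Big(\dashint_{I_m}\big(V(\nabla u_\nu) - \mean{V(\nabla u)}_m\big) \dd \nu,\ \mean{V(\nabla u)}_m - V(A)\Big),
\]
which vanishes because $\dashint_{I_m}\big(V(\nabla u_\nu) - \mean{V(\nabla u)}_m\big)\dd\nu = 0$ by definition of the mean. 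Since the last summand is independent of $\nu$, its average over $I_m$ equals itself, and \eqref{eq:equiv-error-quant1} follows as an exact equality.

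For \eqref{eq:equiv-error-quant2} the plan is simpler still: by the triangle inequality together with $(a+b)^2 \leq 2a^2 + 2b^2$, one has pointwise in $\nu$
\[
\norm{V(\nabla u_\nu) - V(A)}_{L^2_x}^2 \lesssim \norm{V(\nabla u_\nu) - V(\nabla \mean{u}_m)}_{L^2_x}^2 + \norm{V(\nabla \mean{u}_m) - V(A)}_{L^2_x}^2.
\]
Averaging over $I_m$ and observing once more that the second term on the right is constant in $\nu$ yields \eqref{eq:equiv-error-quant2}.

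There is no genuine obstacle here; the content lies entirely in the contrast between the two parts. In \eqref{eq:equiv-error-quant1} one obtains an \emph{equality} because $\mean{V(\nabla u)}_m$ is a bona fide $L^2$-projection, so orthogonality annihilates the cross term. In \eqref{eq:equiv-error-quant2}, by contrast, the intermediate point $V(\nabla \mean{u}_m)$ is produced by first averaging $\nabla u$ and \emph{then} applying the nonlinear map $V$; it is therefore not the $L^2$-average of $V(\nabla u)$, no orthogonality is available, and one can only afford a one-sided triangle-type bound, the loss of equality being exactly the factor hidden in $(a+b)^2 \leq 2a^2 + 2b^2$.
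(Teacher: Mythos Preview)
Your proof is correct and essentially identical to the paper's own argument: both parts are handled by the same splitting and the same vanishing-cross-term/triangle-inequality reasoning you describe. The preliminary integrability check and the closing commentary on why one gets equality in \eqref{eq:equiv-error-quant1} but only an inequality in \eqref{eq:equiv-error-quant2} are pleasant additions not present in the paper.
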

\begin{proof}
The equation~\eqref{eq:equiv-error-quant1} follows by using 
\begin{align*}
\dashint_{I_m} \left(V(\nabla u_\nu) - \mean{V(\nabla u)}_m,\mean{V(\nabla u)}_m - V(A)  \right) \dd \nu = 0.
\end{align*}
The estimate~\eqref{eq:equiv-error-quant2} is obtained trivially,
\begin{align*}
&\dashint_{I_m} \norm{V( \nabla u_\nu) - V(A)}_{L^2_x}^2  \dd \nu \\
&\hspace{3em} = \dashint_{I_m} \norm{V( \nabla u_\nu) - V(\nabla \mean{u}_m)- (V(\nabla \mean{u}_m) -  V(A))}_{L^2_x}^2  \dd \nu\\
&\hspace{3em}\lesssim \dashint_{I_m} \norm{V( \nabla u_\nu ) - V(\nabla \mean{u}_{m})  }_{L^2_x}^2 \dd \nu +  \norm{V(\nabla \mean{u }_{m}) - V(A)}_{L^2_x}^2.
\end{align*}
\end{proof}

\begin{remark}
In \cite[Lemma~6.2]{MR2847446}  the authors prove the equivalence (although it is done purely in space but can be extended to time) of
\begin{align*}
\dashint_{I_m} \norm{V( \nabla u_\nu ) - V(\nabla \mean{u}_{m})  }_{L^2_x}^2 \dd \nu \eqsim \dashint_{I_m} \norm{V( \nabla u_\nu ) - \mean{V(\nabla u)}_{m} }_{L^2_x}^2 \dd \nu .
\end{align*}
If $V(\nabla u) \in B^{1/2}_{2,\infty} L^2_x $, then Lemma~\ref{lem:approximation quality} implies
\begin{align*}
\sum_{m=1}^M \dashint_{I_m} \norm{V( \nabla u_\nu ) - \mean{V(\nabla u)}_{m} }_{L^2_x}^2 \dd \nu \lesssim \tau \seminorm{V(\nabla u)}_{B^{1/2}_{2,\infty} L^2_x}^2.
\end{align*}
\end{remark}

\begin{theorem}[Convergence of Algorithm \ref{algo:2nd}] \label{thm:Convergence2}
Let the assumptions of Theorem~\ref{thm:Convergence} be satisfied. Denote by $\bfw \in (V_h)^{M+1}$ the solution to \eqref{algo:2nd} and by $u$ the weak solution to \eqref{eq:p-Laplace}.
Then 
\begin{align}
\begin{aligned} \label{eq:Rates-algo-2nd}
&\mathbb{E} \left[ \max_{m=1,\ldots,M} \norm{\mean{u}_m - w_m}_{L^2_x}^2 + \sum_{m=1}^M\int_{I_m} \norm{V(\nabla u_\nu) - V(\nabla w_m)}_{L^2_x}^2 \dd \nu   \right] \\
&\hspace{3em}\lesssim \tau  + h^2
\end{aligned}
\end{align}
and
\begin{align}\label{eq:Rates-algo-2nd-2}
\mathbb{E} \left[ \max_{m=1,\ldots,M} \norm{u(t_m) - w_m}_{L^2_x}^2 \right] \lesssim \tau \ln(1+\tau^{-1}) + h^2.
\end{align}
\end{theorem}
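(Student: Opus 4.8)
The plan is to run the proof of Theorem~\ref{thm:Convergence} essentially verbatim, exploiting that the two schemes differ only in their first step. For every $m\ge 2$ the update \eqref{algo:2nd-2} is identical to \eqref{eq:num2} (with $\bfw$ in place of $\bfv$), so the decomposition \eqref{eq:big_est}, the compensator and Burkholder--Davis--Gundy treatment of the stochastic term (Steps~2--3), the Gronwall step (Step~4), and the reconstruction of the error quantity via Lemma~\ref{lem:approximation quality} and Lemma~\ref{lem:l2-stab} (Step~5) transfer without modification, \emph{provided} the new initial error $\mathbb{E}[\norm{\Pi_2 e_1}_{L^2_x}^2]$ satisfies the same bound $\lesssim\tau+h^2$. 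Hence the whole task reduces to redoing the initial-error estimate (Step~1) for the full step \eqref{algo:2nd-1}; everywhere the old proof invoked its initial bound one substitutes the new one. Once \eqref{eq:Rates-algo-2nd} is established, the point-value bound \eqref{eq:Rates-algo-2nd-2} follows exactly as in Part~\ref{en:Convergence2} of Theorem~\ref{thm:Convergence}, by inserting $\mean{u}_m$ and invoking Lemma~\ref{lem:exponential_stability}.

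For the initial step I would set $e_1:=\mean{u}_1-w_1$ and $e_0:=u_0-w_0$, so that $\Pi_2 e_0=0$, subtract the weak form of \eqref{eq:aver-strong1} from \eqref{algo:2nd-1}, and test with $\xi_h=\Pi_2 e_1=\Pi_2\mean{u}_1-w_1$. The only structural difference to Theorem~\ref{thm:Convergence} is that \eqref{algo:2nd-1} carries the coefficient $\tau$, whereas the weight $a_0$ from \eqref{eq:weigths-0} integrates to $\int_\R a_0\,\dd\nu=\tau/2$. Splitting $\tau\,S(\nabla w_1)=\int_\R a_0(\nu)\,S(\nabla w_1)\,\dd\nu+\tfrac{\tau}{2}S(\nabla w_1)$ recovers precisely the two contributions already handled in Step~1 of Theorem~\ref{thm:Convergence}---the stochastic term $J_{1,a}$ (It\^o isometry, the Lipschitz bound \eqref{ass:Lipschitz}, and the estimate \eqref{eq:init_error}) and the monotone consistency term $J_{1,b}$ (Lemma~\ref{lem:gen-young}, Proposition~\ref{prop:non-stability}, Lemma~\ref{lem:approximation quality})---plus one genuinely new consistency term $\tfrac{\tau}{2}\big(S(\nabla w_1),\nabla\Pi_2 e_1\big)$ produced by this coefficient mismatch.

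I expect this extra term to be the main obstacle: I must show it does not degrade the rate, i.e.\ that it contributes at most $O(\tau+h^2)$ up to quantities absorbable on the left. Using $\nabla\Pi_2 e_1=\nabla\Pi_2\mean{u}_1-\nabla w_1$ and the splitting $S(\nabla w_1)=\big(S(\nabla w_1)-S(\nabla\Pi_2\mean{u}_1)\big)+S(\nabla\Pi_2\mean{u}_1)$, the monotone piece
\begin{align*}
-\tfrac{\tau}{2}\big(S(\nabla w_1)-S(\nabla\Pi_2\mean{u}_1),\,\nabla w_1-\nabla\Pi_2\mean{u}_1\big)\eqsim-\tfrac{\tau}{2}\norm{V(\nabla w_1)-V(\nabla\Pi_2\mean{u}_1)}_{L^2_x}^2\le 0
\end{align*}
has a favourable sign by Lemma~\ref{lem:V-coercive} and may be discarded, while the remaining pairing, after a weighted Young inequality and absorption of a further coercive contribution, leaves the lower-order energy term $\tfrac{\tau}{2}\norm{V(\nabla\Pi_2\mean{u}_1)}_{L^2_x}^2$ of the projected solution on $I_1$. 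This I would control through the nonlinear projection stability (Proposition~\ref{prop:non-stability}), which accounts for the $h^2$ defect, together with Jensen's inequality for the convex energy, $\tau\,\norm{V(\nabla\mean{u}_1)}_{L^2_x}^2\lesssim\int_{I_1}\norm{V(\nabla u_\nu)}_{L^2_x}^2\,\dd\nu$, whose expectation is $\lesssim\tau$ because the uniform-in-time spatial regularity of $u$ in \eqref{ass:reg-sol} bounds $\norm{V(\nabla u_\nu)}_{L^2_x}$ uniformly. The decisive point is that the prefactor $\tau$ turns this initial consistency defect into an $O(\tau)$ term, yielding $\mathbb{E}[\norm{\Pi_2 e_1}_{L^2_x}^2]\lesssim\tau+h^2$; feeding this into the unchanged Steps~2--5 produces \eqref{eq:Rates-algo-2nd}, and the reduction of the first paragraph then completes \eqref{eq:Rates-algo-2nd-2}.
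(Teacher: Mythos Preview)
Your reduction is exactly right: for $m\ge 2$ the two schemes coincide, so Steps~2--5 of Theorem~\ref{thm:Convergence} transfer verbatim once the initial error $\mathbb{E}[\norm{\Pi_2 e_1}_{L^2_x}^2]$ is controlled, and \eqref{eq:Rates-algo-2nd-2} then follows from \eqref{eq:Rates-algo-2nd} via Lemma~\ref{lem:exponential_stability}. The gap is in your treatment of the extra consistency term. After the splitting and absorption you describe, you are left with $c\,\tau\,\norm{V(\nabla\Pi_2\mean{u}_1)}_{L^2_x}^2$, and you claim this is $O(\tau)$ because the regularity \eqref{ass:reg-sol} bounds $\norm{V(\nabla u_\nu)}_{L^2_x}$ uniformly in time. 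That is false for $p>2$: the assumption $u\in L^2_\omega L^\infty_t W^{1,2}_x$ only controls $\norm{\nabla u_\nu}_{L^2_x}$, whereas $\norm{V(\nabla u_\nu)}_{L^2_x}^2\eqsim\int_{\mathcal{O}}(\kappa+\abs{\nabla u_\nu})^{p-2}\abs{\nabla u_\nu}^2\,dx$ scales like $\norm{\nabla u_\nu}_{L^p_x}^p$ and is not dominated by $\norm{\nabla u_\nu}_{L^2_x}^2$. The hypothesis on $V(\nabla u)$ in \eqref{ass:reg-sol} gives only $L^2_t L^2_x$ and $B^{1/2}_{2,\infty}L^2_x$, neither of which yields an $L^\infty_t$ bound. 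Thus your Jensen step produces merely $\int_{I_1}\norm{V(\nabla u_\nu)}_{L^2_x}^2\,d\nu\le\norm{V(\nabla u)}_{L^2_tL^2_x}^2=O(1)$, not $O(\tau)$, and the initial-error estimate collapses.

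The paper avoids this mismatch altogether by changing the comparison object: instead of $\mean{u}_1$ it compares $w_1$ with the double-interval average $\mean{u}_{I_1\cup I_2}:=\dashint_{I_1\cup I_2}u_\nu\,d\nu$. The associated weak equation has weight $\dashint_{I_1\cup I_2}\int_0^t\,d\nu\,dt=\tau$, which matches the coefficient in \eqref{algo:2nd-1} exactly, so no residual $S$-term appears. The diffusion part is then handled as in Step~3, and the stochastic parts are estimated crudely via the growth bound \eqref{ass:growth} and It\^o isometry, producing $\tau\,\norm{1+u}_{L^2_\omega L^\infty_t L^2_x}^2$ and $\tau\,\norm{1+u_0}_{L^2_\omega L^2_x}^2$. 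This yields $\mathbb{E}[\norm{\Pi_2\mean{u}_{I_1\cup I_2}-w_1}_{L^2_x}^2]\lesssim\tau+h^2$; the passage back to $\mean{u}_1$ costs only $\norm{\mean{u}_1-\mean{u}_{I_1\cup I_2}}_{L^2_x}^2$, which is $O(\tau)$ by the $B^{1/2}_{2,\infty}$ regularity (Lemma~\ref{lem:approximation quality}).
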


\begin{proof}
The proof proceeds similarly to the proof of Theorem \ref{thm:Convergence}. We will only prove the bound for the initial error. Instead of comparing $w_1$ to $\mean{u}_1$, we rather choose $\mean{u}_{I_1\cup I_2} := \dashint_{I_1 \cup I_2} u_\nu \dd \nu$. The equation for the latter one reads
\begin{align} \label{eq:fullstep}
\left( \mean{u}_{I_1 \cup I_2} - u_0, \xi \right) + \dashint_{I_1 \cup I_2} \int_0^t \left( S(\nabla u_\nu), \nabla \xi \right) \dd \nu \dt =\left( \dashint_{I_1 \cup I_2} \int_0^t G(u_\nu) \dd W_\nu \dt, \xi \right).
\end{align}
Subtracting \eqref{algo:2nd-1} from \eqref{eq:fullstep} and choosing $\xi_h = \Pi_2 \mean{u}_{I_1 \cup I_2} - w_1$ results in
\begin{align*}
A_1 + A_2 &:=\norm{\Pi_2 \mean{u}_{I_1 \cup I_2} - w_1}_{L^2_x}^2 \\
&\quad + \dashint_{I_1 \cup I_2} \int_0^t \left( S(\nabla u_\nu) - S(\nabla w_1), \nabla (\Pi_2 \mean{u}_{I_1 \cup I_2} - w_1) \right) \dd \nu \dt \\
&= \left( \dashint_{I_1 \cup I_2} \int_0^t G(u_\nu) \dd W_\nu \dt,\Pi_2 \mean{u}_{I_1 \cup I_2} - w_1 \right)  \\
&\quad - \left( G(w_0) \mean{W}_1, \Pi_2 \mean{u}_{I_1 \cup I_2} - w_1  \right)\\
&=: A_3 + A_4.
\end{align*}
Due to H\"older's and Young's inequalities, It\^o isometry and the growth assumption \eqref{ass:growth}
\begin{align*}
\mathbb{E} \left[ A_3 \right] &\leq \frac{1}{4} \mathbb{E} \left[ A_1 \right] + \mathbb{E} \left[ \norm{\dashint_{I_1 \cup I_2} \int_0^t G(u_\nu) \dd W_\nu \dt}_{L^2_x}^2 \right]  \\
&= \frac{1}{4} \mathbb{E} \left[ A_1 \right] + \mathbb{E} \left[\int_{I_1 \cup I_2} \left( \frac{\nu}{2\tau} \right)^2  \norm{G(u_\nu)}_{L_2(U;L^2_x)}^2 \dd \nu \right] \\
&\leq \frac{1}{4} \mathbb{E} \left[ A_1 \right] + c \mathbb{E} \left[\int_{I_1 \cup I_2} \left( \frac{\nu}{2\tau} \right)^2  \norm{1+u_\nu}_{L^2_x}^2 \dd \nu \right].
\end{align*}
The boundedness of $u$ as an $L^2_x$ valued-process implies
\begin{align*}
\mathbb{E} \left[ A_3 \right] \leq \frac{1}{4} \mathbb{E} \left[ A_1 \right] + c \frac{2}{3}\tau \norm{1+u}_{L^2_\omega L^\infty_t L^2_x}^2.
\end{align*}
The forth term is estimated similarly,
\begin{align*}
\mathbb{E}\left[ A_4\right] &\leq \frac{1}{4} \mathbb{E} \left[ A_1 \right] + \mathbb{E} \left[ \norm{G(w_0) \mean{W}_1}_{L^2_x}^2 \right] \\
&\leq \frac{1}{4} \mathbb{E} \left[ A_1 \right] + c \frac{1}{3}\tau  \mathbb{E} \left[ \norm{1+\Pi_2 u_0}_{L^2_x}^2 \right].
\end{align*}
Using the $L^2$-stability of the $L^2$-projection we obtain
\begin{align*}
\mathbb{E} \left[ A_4 \right] \leq \frac{1}{4} \mathbb{E} \left[ A_1 \right] + c \frac{1}{3}\tau \norm{1+u_0}_{L^2_\omega L^2_x}^2.
\end{align*}

It remains to check the second term. Here we use the same arguments as in step~3 of the proof of Theorem~\ref{thm:Convergence} to conclude
\begin{align*}
A_2 &\geq \int_{I_1} \norm{V(\nabla u_\nu) - V(\nabla w_1)}_{L^2_x}^2 \dd \nu - c\left(  \tau \seminorm{V(\nabla u)}_{B^{1/2}_{2,\infty} L^2_x}^2 + h^2 \norm{\nabla V(\nabla u)}_{L^2_t L^2_x}^2 \right).
\end{align*}
Overall, we have established
\begin{align*}
&\mathbb{E} \left[ \norm{\Pi_2 \mean{u}_{I_1 \cup I_2} - w_1}_{L^2_x}^2\right] + \mathbb{E} \left[ \int_{I_1} \norm{V(\nabla u_\nu) - V(\nabla w_1)}_{L^2_x}^2 \dd \nu \right] \\
&\lesssim \tau \mathbb{E} \left[ \norm{1+u}_{L^\infty_t L^2_x}^2 + \norm{1+u_0}_{L^2_x}^2 + \seminorm{V(\nabla u)}_{B^{1/2}_{2,\infty}L^2_x}^2 \right] + h^2 \mathbb{E} \left[ \norm{\nabla V(\nabla u)}_{L^2_t L^2_x}^2 \right].
\end{align*}
Lastly, using Lemma~\ref{lem:l2-stab} and Lemma~\ref{lem:approximation quality}
\begin{align*}
&\mathbb{E} \left[ \norm{\mean{u}_1 - w_1}_{L^2_x}^2 \right] \\
&\quad \lesssim \mathbb{E} \left[ \norm{\mean{u}_1 - \mean{u}_{I_1 \cup I_2}}_{L^2_x}^2 \right] +\mathbb{E} \left[ \norm{ \mean{u}_{I_1 \cup I_2} - \Pi_2 \mean{u}_{I_1 \cup I_2}}_{L^2_x}^2 \right]\\
&\quad \quad +\mathbb{E} \left[ \norm{\Pi_2 \mean{u}_{I_1 \cup I_2} - w_1}_{L^2_x}^2 \right] \\
&\quad \lesssim \tau \mathbb{E} \left[\norm{1+u}_{L^\infty_t L^2_x}^2 + \norm{1+u_0}_{L^2_x}^2 + \seminorm{u}_{B^{1/2}_{2,\infty} L^2_x}^2 + \seminorm{V(\nabla u)}_{B^{1/2}_{2,\infty}L^2_x}^2    \right] \\
&\quad \quad + h^2 \mathbb{E}\left[ \norm{\nabla u}_{L^\infty_t L^2_x}^2 +\norm{\nabla V(\nabla u)}_{L^2_t L^2_x}^2\right] .
\end{align*}
The bound for the initial error is complete.

\end{proof}

\section{Discrete stochastic processes} \label{sec:Discrete}
In this section we investigate the law of averaged Wiener processes and propose an implementable sampling algorithm.

\subsection{Wiener process}
We introduce the concept of Hilbert space valued Gaussian processes.
\begin{definition}
A stochastic process $Y$ is called $U$-valued Gaussian process with mean operator $m: I \times U \to \R $ and variance operator $\Sigma: I \times U \times U \to \R$, if for all $t \in I$ and $u \in U$ it holds 
\begin{align*}
\varphi_{Y_t}(u):= \mathbb{E}\left[ e^{-i \left(Y_t, u \right)_U} \right] = e^{-i m_t(u) - \frac{1}{2} \Sigma_t(u,u)}.
\end{align*}
In short, we write $Y_t \sim \mathcal{N}\left(m_t, \Sigma_t \right)\footnote[1]{denotes equality in distribution}$.
\end{definition}
It follows that the stochastic forcing $W$ defined by~\eqref{rep:W} is an $U$-valued Gaussian process with mean operator $m_t(u) = 0$ and variance operator $\Sigma_t(u,v) := t (u,v)_U$ for all $t\in I$ and $u,v \in U$. Moreover, the series \eqref{rep:W} converges in the weak topology of $U$, due to the hypercontractivity of normally distributed random variables it holds for any $q >0$, $u \in U$ and $t,s \in I$
\begin{align*}
\left( \mathbb{E} \left[ \abs{\left( W_t - W_s, u \right)_U}^q \right] \right)^\frac{1}{q} &= \left( \mathbb{E} \left[ \abs{ \sum_{j\in \mathbb{N}} \left(u_j , u \right)_U \left( \beta_t^j - \beta_s^j\right) }^q \right] \right)^\frac{1}{q}\\
&\eqsim \sqrt{q} \abs{t-s}^\frac{1}{2} \norm{u}_U.
\end{align*}
In fact, as soon as the index set is infinite we lose the norm convergence, since
\begin{align*}
\left( \mathbb{E}\left[ \norm{W_t}_U^q \right] \right)^\frac{1}{q} \eqsim \sqrt{q}  \left(t \sum_{j\in \mathbb{N}} \norm{u_j}_U^2 \right)^\frac{1}{2} = \infty.
\end{align*}

%
%
\subsection{Averaged Wiener process}
In this section we compute the distributions of the random variables $(\mean{W}_m)_{m=1}^M$ and the joint distribution of the averaged increments. 

A key tool in the derivation of the distribution of the averaged Wiener process is the decomposition of the process $W$ adjusted to the equidistant time partition $\set{I_m}_{m=1}^M$. We decompose $W|_{I_m}$ into a Brownian bridge $\mathcal{B}_m$ and its nodal values $W(t_{m-1})$ and $W(t_{m})$, i.e., for $t \in I_m$
\begin{align}\label{W-decomp}
W(t) = W(t_{m-1}) + \mathcal{B}_m(t) + \frac{t - t_{m-1}}{\tau}\Delta_m W, 
\end{align}
where 
\begin{align}\label{def:Brownian-bridge}
\mathcal{B}_m(t) := W(t) - W(t_{m-1}) - \frac{t - t_{m-1}}{\tau}\Delta_m W.
\end{align}
Brownian bridges have nice independency properties. They do not look into the past nor future. The following result can be found in~\cite[Section~1.2]{MR2454984}.
\begin{proposition}
Let $(\mathcal{B}_m)_{m=1}^M$ be given by~\eqref{def:Brownian-bridge}. Then for all $m \in \set{1,\ldots,M}$
\begin{align}
\sigma\left( \mathcal{B}_m(t) \big| t \in I_m \right) \perp\sigma\left( W(t) \big| t\in [0,\infty) \backslash (t_{m-1},t_m) \right),
\end{align}
i.e. all finite dimensional distributions of the generators of each sigma algebra are independent of each other. 
\end{proposition}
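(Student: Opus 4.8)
The plan is to use that $W$ is a centered Gaussian process and that, for each fixed $s$, the value $\mathcal{B}_m(s)$ is a \emph{deterministic linear functional} of $W$, so that the entire family of scalars
\[
\set{(\mathcal{B}_m(s), u)_U : s \in I_m,\, u \in U} \cup \set{(W(r), v)_U : r \in [0,\infty)\setminus(t_{m-1},t_m),\, v \in U}
\]
is jointly centered Gaussian. For jointly Gaussian families, independence is equivalent to the vanishing of all cross-covariances. Hence it suffices to show that every generator of the first $\sigma$-algebra is uncorrelated with every generator of the second. I work throughout with these well-defined real scalars $(W(t),u)_U = \sum_{j}(u_j,u)_U\beta^j_t$ rather than with $W(t)$ itself, which circumvents the fact that the cylindrical series \eqref{rep:W} does not converge in the norm of $U$.

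First I would record the covariance structure of the cylindrical Wiener process: from \eqref{rep:W}, the independence of the $\beta^j$, and Parseval's identity $\sum_j (u_j,u)_U(u_j,v)_U = (u,v)_U$, one obtains for all $t,s$ and $u,v \in U$ that $\mathbb{E}[(W(t),u)_U (W(s),v)_U] = (t\wedge s)(u,v)_U$. Writing $\lambda := (s-t_{m-1})/\tau \in [0,1]$, the definition \eqref{def:Brownian-bridge} reads $\mathcal{B}_m(s) = W(s) - (1-\lambda)W(t_{m-1}) - \lambda W(t_m)$, whence
\[
\mathbb{E}[(\mathcal{B}_m(s),u)_U (W(r),v)_U] = \bigl[(s\wedge r) - (1-\lambda)(t_{m-1}\wedge r) - \lambda(t_m\wedge r)\bigr](u,v)_U.
\]
I would then split into the two cases $r \le t_{m-1}$ and $r \ge t_m$, which together exhaust the complement $[0,\infty)\setminus(t_{m-1},t_m)$ (the endpoints being covered by the non-strict inequalities). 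If $r \le t_{m-1}$, then all three minima equal $r$ and the bracket collapses to $r\bigl(1-(1-\lambda)-\lambda\bigr)=0$. If $r \ge t_m$, the minima are $s$, $t_{m-1}$, $t_m$, and using $s = (1-\lambda)t_{m-1} + \lambda t_m$ the bracket again vanishes identically. Thus the cross-covariance is zero for every $s\in I_m$, every admissible $r$, and all $u,v\in U$.

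Finally I would conclude: since the two generating families are jointly Gaussian and pairwise uncorrelated, any finite subcollection from each forms a Gaussian vector with vanishing cross-block, hence the two subvectors are independent; as this holds for all finite subcollections, independence of the generators upgrades to independence of the generated $\sigma$-algebras via a standard $\pi$-system argument (the finite-dimensional cylinder events form a generating $\pi$-system). There is no genuine analytic obstacle here: the computation is elementary, and the only points that require care are the bookkeeping in the reduction from $\sigma$-algebra independence to the scalar Gaussian covariance criterion and the consistent use of the pairings $(W(t),u)_U$ to accommodate the cylindrical nature of $W$.
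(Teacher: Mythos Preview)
Your argument is correct. The reduction to vanishing cross-covariances via the joint Gaussianity of the family $\{(W(t),u)_U\}$, followed by the two-case computation and the $\pi$-system upgrade, is the standard route and is carried out cleanly; the careful use of the pairings $(W(t),u)_U$ to handle the cylindrical setting is appropriate.

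There is nothing to compare against here: the paper does not supply its own proof of this proposition but simply cites \cite[Section~1.2]{MR2454984}. Your self-contained covariance computation is exactly the kind of argument one finds in that reference (for the scalar case), adapted to the $U$-valued setting.
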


\begin{corollary}\label{cor:Independency-BB}
Let $(\mathcal{B}_m)_{m=1}^M$ be given by~\eqref{def:Brownian-bridge}. Then $\mathcal{B}_1, \ldots, \mathcal{B}_M, \Delta_1 W, \ldots, \Delta_M W$ are independent.
\end{corollary}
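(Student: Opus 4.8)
The plan is to combine the independence statement of the preceding Proposition with the independent-increments property of $W$, and then to upgrade the resulting marginal statements to full joint independence by the successive-conditioning criterion. Throughout I would write $\mathcal{G}_m := \sigma(\mathcal{B}_m(t) \mid t \in I_m)$ and $\mathcal{D}_m := \sigma(\Delta_m W)$, and denote by $\mathcal{E}_m := \sigma(W(t) \mid t \in [0,\infty)\setminus (t_{m-1},t_m))$ the external $\sigma$-algebra appearing in the Proposition, so that the Proposition reads $\mathcal{G}_m \perp \mathcal{E}_m$.

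First I would record two elementary measurability inclusions. Since every nodal time $t_0,\ldots,t_M$ lies outside the open interval $(t_{m-1},t_m)$, each increment $\Delta_k W = W(t_k)-W(t_{k-1})$ is $\mathcal{E}_m$-measurable, whence $\sigma(\mathcal{D}_1,\ldots,\mathcal{D}_M)\subset \mathcal{E}_m$. Moreover, for $k<m$ one has $I_k\subset[0,t_{m-1}]\subset[0,\infty)\setminus(t_{m-1},t_m)$, and since $\mathcal{B}_k(t)$ is by \eqref{def:Brownian-bridge} a function of the values of $W$ on $I_k$ only, it follows that $\mathcal{G}_k\subset\mathcal{E}_m$ for every $k<m$.

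Next I would establish the base layer of independence: by the independent-increments property of the cylindrical Wiener process over the disjoint intervals $I_1,\ldots,I_M$, the $\sigma$-algebras $\mathcal{D}_1,\ldots,\mathcal{D}_M$ are jointly independent. I would then add the bridges one at a time, ordering the family as $\mathcal{D}_1,\ldots,\mathcal{D}_M,\mathcal{G}_1,\ldots,\mathcal{G}_M$ and invoking the successive-conditioning criterion: a finite family of $\sigma$-algebras is independent provided each member is independent of the $\sigma$-algebra generated by its predecessors. When $\mathcal{G}_m$ is appended, the $\sigma$-algebra generated by its predecessors is $\sigma(\mathcal{D}_1,\ldots,\mathcal{D}_M,\mathcal{G}_1,\ldots,\mathcal{G}_{m-1})$, which by the two inclusions above is contained in $\mathcal{E}_m$; the Proposition gives $\mathcal{G}_m\perp\mathcal{E}_m$, hence $\mathcal{G}_m$ is independent of its predecessors. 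This verifies the criterion for the whole ordered family and yields joint independence of $\mathcal{D}_1,\ldots,\mathcal{D}_M,\mathcal{G}_1,\ldots,\mathcal{G}_M$, equivalently of $\mathcal{B}_1,\ldots,\mathcal{B}_M,\Delta_1 W,\ldots,\Delta_M W$.

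The only genuine subtlety, and the step I would be most careful with, is the passage from the marginal independence $\mathcal{G}_m\perp\mathcal{E}_m$ of the Proposition to the joint independence of all $2M$ objects; this is exactly what the successive-conditioning criterion handles, provided the predecessor $\sigma$-algebras are shown to sit inside $\mathcal{E}_m$, which is the purpose of the measurability inclusions above. Everything else is routine: the increments are independent by the definition of $W$, and the $U$-valued (cylindrical) nature of the objects plays no role, since independence is formulated at the level of the generated $\sigma$-algebras.
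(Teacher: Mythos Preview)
The paper does not supply a proof of this corollary; it is stated immediately after the Proposition and treated as a direct consequence. Your argument is correct and fills in precisely the details the paper omits: the measurability inclusions $\mathcal{D}_k\subset\mathcal{E}_m$ for all $k$ and $\mathcal{G}_k\subset\mathcal{E}_m$ for $k<m$, together with the successive-conditioning criterion, are exactly what is needed to upgrade the pairwise statement $\mathcal{G}_m\perp\mathcal{E}_m$ of the Proposition to joint independence of all $2M$ objects.
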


Next, we take the time average over the interval $I_m$ in~\eqref{W-decomp} and obtain
\begin{align} \label{def:Mean-decomp}
\mean{W}_m =  W(t_{m-1}) + \mean{\mathcal{B}_m}_m + \frac{\Delta_m W}{2}.
\end{align}

Now, it is our choice whether we want to compute the distribution of $\mean{W}_m$ or $\mean{\mathcal{B}_m}_m$. The formula~\eqref{def:Mean-decomp} provides an easy way to compute the remaining one. We choose to compute the distribution of $\mean{W}_m$. An application of It\^o's formula for $f(s,W_s) = \frac{s-t_m}{\tau} W_s$ implies $\mathbb{P}$-a.s.
\begin{align}
\mean{W}_m = W_{t_{m-1}}+  \int_{t_{m-1}}^{t_m} \frac{t_m - s}{\tau} \dd W_s.
\end{align}
A stochastic integral that is driven by a Wiener process and a deterministic integrand stays Gaussian. 
\begin{lemma}[\cite{MR3726894} Prop. 7.1]\label{lem:inv-Gauss}
Let $f \in L^2(I)$. Then 
\begin{align*}
t \mapsto \int_0^t f_s \dd W_s 
\end{align*}
is an $U$-valued Gaussian process with zero mean and variance
\begin{align*}
\Sigma_t (u,v) = \int_0^t \abs{f_s}^2 \ds \left(u,v \right)_U. 
\end{align*}
\end{lemma}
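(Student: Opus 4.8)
The plan is to verify the defining property of a $U$-valued Gaussian process directly from the definition, by computing the characteristic functional $\varphi_{Y_t}(u) = \mathbb{E}[e^{-i(Y_t,u)_U}]$ for the process $Y_t := \int_0^t f_s \dd W_s$ at each fixed $t \in I$ and $u \in U$. The key reduction is that pairing the cylindrical stochastic integral with $u$ collapses it to a one-dimensional Wiener integral of a deterministic integrand against a rescaled Brownian motion, whose law is classical.

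First I would fix $u \in U$ and record that $(W_\bullet, u)_U = \sum_{j\in\mathbb{N}} (u_j,u)_U \beta^j$ is a real-valued continuous martingale; by Parseval's identity $\sum_{j} (u_j,u)_U^2 = \norm{u}_U^2$, so its quadratic variation is $\langle (W,u)_U \rangle_s = \norm{u}_U^2\, s$. In particular $(W_\bullet,u)_U$ is a standard Brownian motion rescaled by $\norm{u}_U$. Using that $f$ is scalar and deterministic, I would then commute the pairing with the integral, $(Y_t, u)_U = \int_0^t f_s \dd (W_s,u)_U$, identifying this with a scalar Wiener integral of $f$ against $(W,u)_U$.

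By the standard theory of such integrals, $(Y_t,u)_U$ is a centered real Gaussian random variable: one approximates $f \in L^2(I)$ by step functions, for which the integral is a finite linear combination of independent Gaussian increments and hence centered Gaussian, and then passes to the $L^2(\Omega)$ limit, which preserves Gaussianity. The It\^o isometry \eqref{eq:ito-iso}, applied in this scalar setting, yields its variance, $\mathbb{E}\big[(Y_t,u)_U^2\big] = \int_0^t \abs{f_s}^2 \dd\langle (W,u)_U\rangle_s = \norm{u}_U^2 \int_0^t \abs{f_s}^2 \ds$. Inserting the characteristic function of a centered real Gaussian gives $\varphi_{Y_t}(u) = \exp\big(-\tfrac12 \norm{u}_U^2 \int_0^t \abs{f_s}^2 \ds\big)$, which matches the definition with $m_t(u)=0$ and $\Sigma_t(u,u) = (u,u)_U \int_0^t \abs{f_s}^2 \ds$. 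Polarizing the quadratic form $u \mapsto \Sigma_t(u,u)$ then recovers the symmetric bilinear expression $\Sigma_t(u,v) = (u,v)_U \int_0^t \abs{f_s}^2 \ds$ for all $u,v \in U$.

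The only nonroutine point is the passage from step-function integrands to a general $f \in L^2(I)$ while retaining the Gaussianity of $(Y_t,u)_U$ and the convergence of the associated series; everything else (Parseval, the It\^o isometry, and polarization) is bookkeeping. One should also note that, consistently with the cylindrical nature of $W$ already discussed in the text, the statement is to be understood at the level of the pairings $(Y_t,u)_U$, which is exactly what the definition of a $U$-valued Gaussian process requires.
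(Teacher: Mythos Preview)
Your argument is correct and is the standard verification: reduce $(Y_t,u)_U$ to a scalar Wiener integral against the rescaled Brownian motion $(W_\bullet,u)_U$, obtain Gaussianity via step-function approximation and $L^2(\Omega)$-closure, read off the variance from the It\^o isometry, and polarize. Note, however, that the paper does not supply its own proof of this lemma; it is quoted as \cite{MR3726894}~Prop.~7.1 and used as a black box. So there is no in-paper argument to compare against, and your self-contained verification is an appropriate substitute.
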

\begin{corollary}\label{cor:Dist-meanW}
$\mean{W}_m$ is an $U$-valued Gaussian random variable with zero mean and variance $\Sigma(u,v) = \frac{2t_{m-1} + t_m}{3}(u,v)_U $.
\end{corollary}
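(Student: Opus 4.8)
The plan is to express $\mean{W}_m$ as a single Wiener integral with a deterministic integrand and then invoke Lemma~\ref{lem:inv-Gauss} directly. Starting from the It\^o representation established just above the corollary,
\begin{align*}
\mean{W}_m = W_{t_{m-1}} + \int_{t_{m-1}}^{t_m} \frac{t_m - s}{\tau}\dd W_s,
\end{align*}
I would observe that the nodal value can itself be written as a Wiener integral, $W_{t_{m-1}} = \int_0^{t_m} 1_{[0,t_{m-1}]}(s) \dd W_s$. Combining the two contributions gives $\mean{W}_m = \int_0^{t_m} f_s \dd W_s$ with the deterministic integrand
\begin{align*}
f_s := 1_{[0,t_{m-1}]}(s) + \frac{t_m - s}{\tau}\, 1_{[t_{m-1},t_m]}(s).
\end{align*}

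Since $f \in L^2(I)$, Lemma~\ref{lem:inv-Gauss} immediately yields that $\mean{W}_m$ is a $U$-valued Gaussian random variable with zero mean and variance $\Sigma(u,v) = \big(\int_0^{t_m} \abs{f_s}^2 \ds\big)(u,v)_U$. It then only remains to evaluate the scalar $\int_0^{t_m}\abs{f_s}^2\ds$. The indicator part contributes $\int_0^{t_{m-1}} 1 \ds = t_{m-1}$, while the substitution $r = t_m - s$ gives $\int_{t_{m-1}}^{t_m}\big(\tfrac{t_m-s}{\tau}\big)^2\ds = \tau^{-2}\int_0^\tau r^2 \dd r = \tau/3$. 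Adding these and using $\tau = t_m - t_{m-1}$ produces $t_{m-1} + \tau/3 = (2t_{m-1}+t_m)/3$, which is the claimed variance.

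There is essentially no real obstacle here; the only point requiring care is recognizing that the past increment $W_{t_{m-1}}$ and the integral over $I_m$ can be merged into one Wiener integral with deterministic integrand, so that Lemma~\ref{lem:inv-Gauss} applies in a single step. As an alternative one could keep the two terms separate: they are driven by disjoint time intervals and are therefore independent (cf. the independence structure recorded in Corollary~\ref{cor:Independency-BB}), one computes their variances $t_{m-1}(u,v)_U$ and $(\tau/3)(u,v)_U$ individually, and the sum of independent centered Gaussians is again a centered Gaussian with the variances added, giving the same result.
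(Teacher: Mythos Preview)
Your proof is correct and essentially matches the paper's argument; the paper in fact takes what you describe as the ``alternative'' route, splitting $\mean{W}_m = W_{t_{m-1}} + \int_{t_{m-1}}^{t_m}\tfrac{t_m-s}{\tau}\dd W_s$ into two independent Gaussian pieces and adding their variances $t_{m-1}(u,v)_U$ and $(\tau/3)(u,v)_U$. Your primary approach of merging both pieces into a single Wiener integral with deterministic integrand and invoking Lemma~\ref{lem:inv-Gauss} once is a minor streamlining of the same computation.
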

\begin{proof}
Let us define
\begin{align*}
\mean{W}_m = W_{t_{m-1}}+  \int_{t_{m-1}}^{t_m} \frac{t_m - s}{\tau} \dd W_s =: W_a + W_b.
\end{align*}
Note, that $W_a$ and $W_b$ are independent. Moreover, $W_a$ is Gaussian with variance $\Sigma_a(u,v) = t_{m-1} \left(u,v \right)_U$ and due to Lemma~\ref{lem:inv-Gauss} $W_b$ is also Gaussian. Therefore $\mean{W}_m$ is Gaussian and it suffices to compute the mean and the variance operators.

Let $u,v \in U$. Then $\mathbb{E} \left[ \left( \mean{W}_m, u \right)_{U} \right] = 0$ and
\begin{align*}
&\mathbb{E} \left[\left( \mean{W}_m, u \right)_{U} \left( \mean{W}_m, v \right)_{U} \right] \\
&= \mathbb{E} \left[\left( W_a, u \right)_{U}\left( W_a, v \right)_{U} \right] + \mathbb{E} \left[\left( W_b, u\right)_{U}\left( W_b, v\right)_{U}  \right] \\
&= \left(u,v \right)_U \left( t_{m-1}  + \int_{t_{m-1}}^{t_m} \left(\frac{t_m - s}{\tau} \right)^2 \ds \right) \\
& = \left(u,v \right)_U  \frac{ 2t_{m-1} + t_m}{3}.
\end{align*}
The assertion is proved.
\end{proof}
\begin{corollary} \label{cor:dist-Bridge}
$\mean{\mathcal{B}_m}_m$ is an $U$-valued Gaussian random variable with zero mean and variance $\Sigma(u,v) = \frac{\tau}{12}(u,v)_U$.
\end{corollary}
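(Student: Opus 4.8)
The plan is to read off the law of $\mean{\mathcal{B}_m}_m$ directly from the decomposition~\eqref{def:Mean-decomp}, which I would rearrange as
\begin{align*}
\mean{\mathcal{B}_m}_m = \mean{W}_m - W(t_{m-1}) - \tfrac{1}{2}\Delta_m W.
\end{align*}
Each summand is a continuous linear functional of the cylindrical Wiener process $W$ (a time average, a point evaluation, and an increment), so the three are jointly Gaussian and hence $\mean{\mathcal{B}_m}_m$ is itself a $U$-valued Gaussian random variable. Its mean operator vanishes, since $\mean{W}_m$ is centered by Corollary~\ref{cor:Dist-meanW} and $W(t_{m-1})$ and $\Delta_m W$ are centered increments. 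It therefore remains only to identify the variance operator.

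For the variance I would argue via independence. By Corollary~\ref{cor:Independency-BB}, $\mean{\mathcal{B}_m}_m$ (a functional of the bridge $\mathcal{B}_m$) is independent of both $\Delta_m W$ and $W(t_{m-1})$, while $W(t_{m-1})$ and $\Delta_m W$ are independent as increments over disjoint intervals. Thus the three terms on the right-hand side of~\eqref{def:Mean-decomp} are mutually independent, and their variance operators add. Writing $\Sigma$ for the sought variance operator of $\mean{\mathcal{B}_m}_m$ and inserting the variance $\tfrac{2t_{m-1}+t_m}{3}(u,v)_U$ of $\mean{W}_m$ from Corollary~\ref{cor:Dist-meanW}, the variance $t_{m-1}(u,v)_U$ of $W(t_{m-1})$, and the variance $\tau(u,v)_U$ of $\Delta_m W$, I obtain
\begin{align*}
\frac{2t_{m-1}+t_m}{3}(u,v)_U = t_{m-1}(u,v)_U + \Sigma(u,v) + \frac{\tau}{4}(u,v)_U,
\end{align*}
so that solving for $\Sigma$ gives
\begin{align*}
\Sigma(u,v) = \left(\frac{t_m - t_{m-1}}{3} - \frac{\tau}{4}\right)(u,v)_U = \left(\frac{\tau}{3} - \frac{\tau}{4}\right)(u,v)_U = \frac{\tau}{12}(u,v)_U,
\end{align*}
as claimed.

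Alternatively, and more in the spirit of the proof of Corollary~\ref{cor:Dist-meanW}, one can bypass the independence bookkeeping: since $\mean{W}_m - W(t_{m-1}) = \int_{t_{m-1}}^{t_m}\frac{t_m-s}{\tau}\dd W_s$ and $\Delta_m W = \int_{t_{m-1}}^{t_m}\dd W_s$, the representation collapses to the single stochastic integral
\begin{align*}
\mean{\mathcal{B}_m}_m = \int_{t_{m-1}}^{t_m}\left(\frac{t_m-s}{\tau} - \frac{1}{2}\right)\dd W_s,
\end{align*}
whose integrand is deterministic and square-integrable on $I_m$, so Lemma~\ref{lem:inv-Gauss} yields at once Gaussianity, zero mean, and variance $\int_{t_{m-1}}^{t_m}(\frac{t_m-s}{\tau}-\frac12)^2\ds\,(u,v)_U$; the substitution $r=(t_m-s)/\tau$ reduces this to $\tau\int_0^1(r-\tfrac12)^2\dd r\,(u,v)_U = \frac{\tau}{12}(u,v)_U$. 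The whole computation is routine; the only point that genuinely deserves attention is the justification that the summands in~\eqref{def:Mean-decomp} are mutually independent so that their variances add, which is precisely the content of Corollary~\ref{cor:Independency-BB} together with independence of increments, or, in the second approach, the trivial check that the deterministic integrand lies in $L^2(I_m)$.
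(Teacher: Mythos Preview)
Your first approach is essentially the paper's proof: both exploit the mutual independence of the three summands in~\eqref{def:Mean-decomp} to solve for the unknown variance of $\mean{\mathcal{B}_m}_m$ from the known variance of $\mean{W}_m$; the paper phrases this via characteristic functions (factoring $\varphi_{\mean{W}_m}$ and dividing), while you add variance operators directly, but for centered Gaussians these are the same computation.

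Your second approach, writing $\mean{\mathcal{B}_m}_m$ as the single stochastic integral $\int_{t_{m-1}}^{t_m}\bigl(\tfrac{t_m-s}{\tau}-\tfrac12\bigr)\dd W_s$ and invoking Lemma~\ref{lem:inv-Gauss}, is a genuinely different and arguably cleaner route: it delivers Gaussianity, zero mean, and the variance $\tfrac{\tau}{12}$ in one stroke, without ever appealing to Corollary~\ref{cor:Independency-BB} or the distribution of $\mean{W}_m$. The paper's method has the advantage of making the independence structure of~\eqref{def:Mean-decomp} explicit, which is reused in the subsequent Corollary~\ref{cor:Distr-IncMean} and Lemma~\ref{lem:distribution}; your integral representation is more self-contained but does not feed forward in the same way.
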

\begin{proof}
The distribution of a random variable is uniquely determined by its characteristic function. Corollary~\ref{cor:Dist-meanW} implies
\begin{align*}
\varphi_{\mean{W}_m} (u) &= e^{- \frac{1}{2} \frac{2 t_{m-1} + t_m}{3} \norm{u}^2_U}.
\end{align*}
Classically, we find 
\begin{align*}
\varphi_{W(t_{m-1})}(u) &= e^{- \frac{1}{2} t_{m-1} \norm{u}^2_U}, \\
\varphi_{\frac{\Delta_m W}{2}}(u) &= e^{- \frac{1}{2} \frac{\tau}{4} \norm{u}^2_U}.
\end{align*}
The characteristic function of $\varphi_{\mean{W}_m}$ factors due to the independence of the decomposition~\eqref{def:Mean-decomp}, i.e.,
\begin{align*}
\varphi_{\mean{W}_m}(u) = \varphi_{W(t_{m-1}) + \mean{\mathcal{B}_m}_m + \frac{\Delta_m W}{2}}(u) = \varphi_{W(t_{m-1})}(u) \varphi_{\mean{\mathcal{B}_m}_m }(u) \varphi_{\frac{\Delta_m W}{2}}(u).
\end{align*}
Rearranging implies
\begin{align*}
\varphi_{\mean{\mathcal{B}_m}_m }(u) &= \frac{\varphi_{\mean{W}_m} (u)}{\varphi_{W(t_{m-1})}(u) \varphi_{\frac{\Delta_m W}{2}}(u)} \\
&= e^{- \frac{1}{2} \left(\frac{2 t_{m-1} + t_m}{3} - t_{m-1} - \frac{\tau}{4} \right) \norm{u}_U^2} = e^{- \frac{1}{2} \frac{\tau}{12}\norm{u}_U^2}.
\end{align*}
Overall, $\mean{\mathcal{B}_m}_m$ has the characteristic function of a Gaussian random variable with zero mean and variance $\Sigma(u,v) = \frac{\tau}{12}(u,v)_U$.
\end{proof}

At this point it is a simple task to find the distribution of the increments of the averaged Wiener process. Let us subtract~\eqref{def:Mean-decomp} for $m$ and $m-1$
\begin{align} \label{def:W-aver}
\Delta_m \mathbb{W} := \mean{W}_m - \mean{W}_{m-1} =  \frac{\Delta_m W + \Delta_{m-1} W}{2} + \mean{\mathcal{B}_m}_m - \mean{\mathcal{B}_{m-1}}_{m-1},
\end{align}
where we define $\mean{W}_0 := \Delta_0 W:= \mean{\mathcal{B}_0}_0:= 0$.
\begin{corollary} \label{cor:Distr-IncMean}
$\Delta_m \mathbb{W}$ is an $U$-valued Gaussian random variable with zero mean and variance $\Sigma(u,v) = \left( \frac{2\tau}{3} \chi_{\set{m \geq 2}} + \frac{\tau}{3} \chi_{\set{m=1}} \right) \left( u, v \right)_U$.
\end{corollary}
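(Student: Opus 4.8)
The plan is to exploit the additive decomposition~\eqref{def:W-aver} together with the independence structure furnished by Corollary~\ref{cor:Independency-BB}, mirroring the characteristic-function computation already used in the proof of Corollary~\ref{cor:dist-Bridge}. First I would dispose of the base case $m=1$ separately: since $\mean{W}_0 := 0$, one has $\Delta_1 \mathbb{W} = \mean{W}_1$, so Corollary~\ref{cor:Dist-meanW} with $t_0 = 0$ and $t_1 = \tau$ directly yields a centred $U$-valued Gaussian with variance $\frac{2 t_0 + t_1}{3}(u,v)_U = \frac{\tau}{3}(u,v)_U$, which is exactly the asserted value on the event $\set{m=1}$.

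For $m \geq 2$ I would argue via characteristic functions. The four summands $\tfrac{\Delta_m W}{2}$, $\tfrac{\Delta_{m-1} W}{2}$, $\mean{\mathcal{B}_m}_m$ and $\mean{\mathcal{B}_{m-1}}_{m-1}$ in~\eqref{def:W-aver} are each a measurable function of exactly one of the mutually independent objects $\Delta_m W$, $\Delta_{m-1} W$, $\mathcal{B}_m$, $\mathcal{B}_{m-1}$ guaranteed by Corollary~\ref{cor:Independency-BB}. Consequently they are jointly independent, and the characteristic function of $\Delta_m \mathbb{W}$ factorizes into the product of the four individual characteristic functions. Each factor is that of a centred Gaussian, so the mean operator vanishes and the variances simply add.

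It remains to collect the individual variances. The classical increment $\Delta_m W$ has variance $\tau (u,v)_U$, whence $\tfrac{\Delta_m W}{2}$ (and likewise $\tfrac{\Delta_{m-1} W}{2}$) contributes $\tfrac{\tau}{4}(u,v)_U$, while Corollary~\ref{cor:dist-Bridge} supplies variance $\tfrac{\tau}{12}(u,v)_U$ for each of $\mean{\mathcal{B}_m}_m$ and $\mean{\mathcal{B}_{m-1}}_{m-1}$. Adding these gives
\begin{align*}
\frac{\tau}{4} + \frac{\tau}{4} + \frac{\tau}{12} + \frac{\tau}{12} = \frac{\tau}{2} + \frac{\tau}{6} = \frac{2\tau}{3},
\end{align*}
which identifies $\Delta_m \mathbb{W}$ for $m \geq 2$ as a centred Gaussian with variance $\tfrac{2\tau}{3}(u,v)_U$ and thereby completes both cases. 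There is no substantial obstacle in this argument; the only point demanding care is to verify the \emph{joint} (not merely pairwise) independence of the four summands, which is precisely what Corollary~\ref{cor:Independency-BB} delivers once one observes that $\mean{\mathcal{B}_m}_m$ is a function of $\mathcal{B}_m$ alone.
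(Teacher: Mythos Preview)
Your proposal is correct and follows essentially the same route as the paper: both treat $m=1$ by reducing to Corollary~\ref{cor:Dist-meanW}, and for $m\geq 2$ both exploit the independence from Corollary~\ref{cor:Independency-BB} to add the four individual variances $\tfrac{\tau}{4}+\tfrac{\tau}{4}+\tfrac{\tau}{12}+\tfrac{\tau}{12}=\tfrac{2\tau}{3}$. The only cosmetic difference is that you phrase the additivity via factorization of characteristic functions, whereas the paper computes $\mathbb{E}[(\Delta_m\mathbb{W},u)_U(\Delta_m\mathbb{W},v)_U]$ directly and uses independence to discard the cross terms.
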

\begin{proof}
The right hand side of~\eqref{def:W-aver} is a sum of independent, centered Gaussian random variables. Thus the left hand side is centered Gaussian. Now, it suffices to compute the variance operator.

Note, in the case $m=1$ we have $\Delta_1 \mathbb{W} = \mean{W}_1$ and the result follows by Corollary~\ref{cor:Dist-meanW}. 

Let $m \geq 2$ and $u,v \in U$. Due to the independence,
\begin{align*}
&\mathbb{E} \left[\left( \Delta_m \mathbb{W}, u \right)_{U} \left(  \Delta_m \mathbb{W}, v \right)_{U} \right] \\
&\quad = \mathbb{E} \left[\left( \frac{\Delta_m W}{2}, u \right)_{U} \left(  \frac{\Delta_m W}{2}, v \right)_{U} \right] + \mathbb{E} \left[\left( \frac{\Delta_{m-1} W}{2}, u \right)_{U} \left(  \frac{\Delta_{m-1} W}{2}, v \right)_{U} \right] \\
&\quad \quad + \mathbb{E} \left[\left( \mean{\mathcal{B}_m}_m, u \right)_{U} \left( \mean{\mathcal{B}_m}_m, v \right)_{U} \right]  + \mathbb{E} \left[\left( \mean{\mathcal{B}_{m-1}}_{m-1}, u \right)_{U} \left( \mean{\mathcal{B}_{m-1}}_{m-1}, v \right)_{U} \right]  \\
&\quad = \frac{2\tau}{3}\left( u, v \right)_U.
\end{align*} 
\end{proof}
So far we have identified how each averaged increment~$\Delta_m \mathbb{W}$ is distributed. However, we also need to know what the joint distribution is, i.e., the distribution of a random vector.
\begin{lemma}\label{lem:distribution}
The random vector $(\Delta_m \mathbb{W})_{m=1}^M$ is an $U^M$-valued centered Gaussian random variable with variance operator $\Sigma :U^M \times U^M \to \R$ given by
\begin{align*}
 \Sigma (\bfu, \bfv) :=  \sum_{m,l=1}^M \sigma_{m,l} \left(u_m, v_l\right)_U,
\end{align*}
for $\bfu, \bfv \in U^M$ and 
\begin{align} \label{eq:sigma}
\sigma_{m,l} &= \begin{cases}
\frac{1}{3} \tau & \text{ if } l=m=1,\\
\frac{2}{3} \tau & \text{ if } l=m > 1, \\
\frac{1}{6} \tau & \text{ if }\abs{l-m} = 1, \\ 
0 & \text{ if }\abs{l-m}> 1.
\end{cases}
\end{align}
\end{lemma}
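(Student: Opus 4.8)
The plan is to reduce everything to the independent building blocks furnished by the decomposition~\eqref{def:W-aver}, that is the increments $\Delta_k W$ and the averaged Brownian bridges $\mean{\mathcal{B}_k}_k$. By Corollary~\ref{cor:Independency-BB} the family $\set{\Delta_k W}_k \cup \set{\mean{\mathcal{B}_k}_k}_k$ consists of independent, centered Gaussian random variables and is therefore jointly Gaussian. Since each $\Delta_m \mathbb{W}$ is, by~\eqref{def:W-aver}, a \emph{finite linear combination} of members of this family, the whole vector $(\Delta_m \mathbb{W})_{m=1}^M$ is a linear image of a jointly Gaussian family and hence itself a centered $U^M$-valued Gaussian random variable. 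Consequently its law is completely determined by the covariance operator, and it only remains to identify the coefficients $\sigma_{m,l}$ via
\begin{align*}
\mathbb{E}\left[ (\Delta_m \mathbb{W}, u)_U (\Delta_l \mathbb{W}, v)_U \right] = \sigma_{m,l}\,(u,v)_U, \qquad u,v \in U.
\end{align*}

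To compute these, fix $m,l$ and read off from~\eqref{def:W-aver} that $\Delta_m \mathbb{W}$ is assembled from the four blocks carrying indices in $\set{m,m-1}$, namely $\tfrac12 \Delta_m W$, $\tfrac12 \Delta_{m-1} W$, $+\mean{\mathcal{B}_m}_m$ and $-\mean{\mathcal{B}_{m-1}}_{m-1}$, with the convention $\Delta_0 W = \mean{\mathcal{B}_0}_0 = 0$ (and $W(t_0)=0$) absorbing the special case $m=1$. By independence, the covariance of $\Delta_m \mathbb{W}$ and $\Delta_l \mathbb{W}$ receives contributions only from blocks \emph{common} to both, each weighted by the product of its two coefficients and its variance, where $\Delta_k W$ has variance $\tau (u,v)_U$ and $\mean{\mathcal{B}_k}_k$ has variance $\tfrac{\tau}{12}(u,v)_U$ by Corollary~\ref{cor:dist-Bridge}. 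The diagonal entries $\sigma_{m,m}$ are precisely the variances already computed in Corollary~\ref{cor:Distr-IncMean}, yielding $\tfrac13\tau$ for $m=1$ and $\tfrac23\tau$ for $m\geq 2$, in agreement with the first two cases of~\eqref{eq:sigma}.

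For the nearest-neighbour case $l=m-1$ with $m\geq 2$, the shared blocks are $\Delta_{m-1} W$, entering with coefficient $\tfrac12$ in each factor, and $\mean{\mathcal{B}_{m-1}}_{m-1}$, entering with coefficient $-1$ in $\Delta_m\mathbb{W}$ and $+1$ in $\Delta_{m-1}\mathbb{W}$; hence
\begin{align*}
\sigma_{m,m-1} = \frac12\cdot\frac12\,\tau + (-1)\cdot(+1)\cdot\frac{\tau}{12} = \frac{\tau}{4} - \frac{\tau}{12} = \frac{\tau}{6},
\end{align*}
and by symmetry the same value occurs for $l=m+1$. When $\abs{m-l}\geq 2$ the index sets $\set{m,m-1}$ and $\set{l,l-1}$ are disjoint, so $\Delta_m\mathbb{W}$ and $\Delta_l\mathbb{W}$ share no building block and independence forces $\sigma_{m,l}=0$. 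This exhausts all cases in~\eqref{eq:sigma}. The computation itself is routine once the decomposition is in place; the only point requiring genuine care is the bookkeeping of which independent summands are common to two neighbouring increments, together with the sign pattern that produces the cancellation $\tfrac{\tau}{4}-\tfrac{\tau}{12}$ in the nearest-neighbour covariance. The boundary index $m=1$, where $\mean{W}_0$ and its bridge vanish, must be tracked separately but introduces no additional obstacle.
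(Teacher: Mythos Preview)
Your proof is correct and follows essentially the same approach as the paper: both arguments use the decomposition~\eqref{def:W-aver} to exhibit $(\Delta_m \mathbb{W})_{m=1}^M$ as a linear image of the independent Gaussian family $\set{\Delta_k W}_k \cup \set{\mean{\mathcal{B}_k}_k}_k$ (the paper records this via explicit bidiagonal matrices $\mathbb{K}_1,\mathbb{K}_2$, you leave it implicit), and then compute the covariance case by case, invoking Corollary~\ref{cor:Distr-IncMean} on the diagonal, independence for $\abs{m-l}>1$, and the calculation $\tfrac{\tau}{4}-\tfrac{\tau}{12}=\tfrac{\tau}{6}$ for nearest neighbours.
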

\begin{proof}
The equation~\eqref{def:W-aver} implies, that the random vector $(\Delta_m \mathbb{W})_{m=1}^M$ can be constructed via a linear transformation of independent Gaussian random vectors $(\Delta_m W)_{m=1}^M$ and $(\mean{\mathcal{B}_m}_m)_{m=1}^M$, i.e.,
\begin{align*}
\left( \Delta_m \mathbb{W} \right)_{m=1}^M = \mathbb{K}_1 (\Delta_m W)_{m=1}^M + \mathbb{K}_2 (\mean{\mathcal{B}_m}_m)_{m=1}^M,
\end{align*}
where $\mathbb{K}_1, \mathbb{K}_2 \in \R^{M \times M}$ are given by
\begin{align*}
\mathbb{K}_1 &= \frac{1}{2} 
\begin{pmatrix}
1 & 0 & 0 & 0 &\dots & 0 \\
1 & 1 & 0 & 0 &\dots & 0 \\
0 & 1 & 1 & 0 & \dots & 0 \\
\vdots & \vdots & \vdots & \vdots &  \vdots & \vdots \\
0 & 0 & \ldots & 1 & 1 & 0 \\ 
0 & 0 & \ldots & 0 & 1 & 1
\end{pmatrix}, \quad 
\mathbb{K}_2 = \begin{pmatrix}
1 & 0 & 0 & 0 &\dots & 0 \\
-1 & 1 & 0 & 0 &\dots & 0 \\
0 & -1 & 1 & 0 & \dots & 0 \\
\vdots & \vdots & \vdots & \vdots &  \vdots & \vdots \\
0 & 0 & \ldots & -1 & 1 & 0 \\ 
0 & 0 & \ldots & 0 & -1 & 1
\end{pmatrix}.
\end{align*}
Therefore, $(\Delta_m \mathbb{W})_{m=1}^M$ is itself a centered Gaussian vector. It remains to compute the covariance matrix. Let $u,v \in U$ and $m, l \in \set{1, \ldots, M}$. If $m = l$ Corollary~\ref{cor:Distr-IncMean} implies 
\begin{align*}
\mathbb{E} \left[ \left( \Delta_m \mathbb{W}, u \right)\left( \Delta_l \mathbb{W}, v\right) \right] = \left( \frac{2\tau}{3} \chi_{\set{m \geq 2}} + \frac{\tau}{3} \chi_{\set{m=1}} \right) \left( u, v \right)_U.
\end{align*}
If $\abs{m-l}> 1$, then equation~\eqref{def:W-aver} and the independence imply
\begin{align*}
\mathbb{E} \left[ \left( \Delta_m \mathbb{W}, u \right)\left( \Delta_l \mathbb{W}, v\right) \right] = 0.
\end{align*}
It remains to consider the case $\abs{m-l}=1$. Without loss of generality $l = m+1$. Now, using~\eqref{def:W-aver}, the independence and Corollary~\ref{cor:dist-Bridge}
\begin{align*}
&\mathbb{E} \left[ \left( \Delta_m \mathbb{W}, u \right)\left( \Delta_{m+1} \mathbb{W}, v\right) \right] \\
&\quad \quad = \mathbb{E} \left[ \left( \frac{\Delta_m W}{2}, u \right)\left( \frac{\Delta_m W}{2}, v\right) \right] - \mathbb{E} \left[ \left( \mean{\mathcal{B}_m}_m, u \right)\left( \mean{\mathcal{B}_m}_m, v\right) \right] \\
&\quad \quad = \left( \frac{\tau}{4} - \frac{\tau}{12} \right) \left(u,v \right)_U = \frac{\tau}{6} \left(u,v \right)_U.
\end{align*}
The proof is finished.
\end{proof}

\subsection{Sampling algorithm}
On the computer we are forced to approximate the continuous measure induced by the Wiener process $W$ by an empirical measure. Additionally, if we want to compare different numerical schemes we need to specify how to sample the random input needed for the involved algorithms jointly. More specifically, if we want to compare our algorithms~\eqref{eq:algo} and~\eqref{algo:2nd} and the classical Euler-Maruyama discretization~\eqref{eq:algo-EM}, we need to sample according to the law of the random vector
\begin{align} \label{eq:joint-W}
\left( \Delta_1 W, \ldots, \Delta_M W, \Delta_1 \mathbb{W}, \ldots, \Delta_M \mathbb{W} \right).
\end{align}
 
Based on the decomposition~\eqref{def:W-aver} we propose the following sampling algorithm.  The time discretization is achieved by the parameter $M \in \mathbb{N}$ and the series truncation in \eqref{rep:W} is done by the parameter $J \in \mathbb{N}$.

Given $M,J \in \mathbb{N}$.
\begin{enumerate}
\item (Sampling) Compute i.i.d. random variables $\zeta_{m}^j, \eta_{m}^j \sim \mathcal{N}(0,1)$ for $m \in \set{1,\ldots,M}$ and $j \in \set{1,\ldots,J}$.
\item (Lift to Hilbert space $U$) For $m \in \set{1,\ldots,M}$ define the random variables 
\begin{subequations} \label{algo:Sampling}
\begin{align} \label{eq:Z-new}
Z_m := \sqrt{\tau} \sum_{j=1}^J u^j \zeta_{m}^j, \quad  \tilde{\mathbb{Z}}_m := \sqrt{ \frac{\tau}{12} }\sum_{j=1}^J u^j \eta_{m}^j,
\end{align}
where $\set{u^j}_{j\in \mathbb{N}}$ is an orthonormal system of $U$.
\item (Adjusting correlation)  For $m \in \set{1,\ldots,M}$ define the random variables 
\begin{align} \label{eq:Z-new2}
\mathbb{Z}_m := \frac{Z_m + Z_{m-1}}{2} + \tilde{\mathbb{Z}}_m - \tilde{\mathbb{Z}}_{m-1},
\end{align}
\end{subequations}
where $Z_0 := \tilde{\mathbb{Z}}_0 := 0$.
\end{enumerate} 
%
%
%

Let $\Pi_J$ be the $U$-orthogonal projection onto $U_J := \text{span}(u_1,\ldots, u_J)$. The following proposition guarantees that the sampling algorithm~\eqref{algo:Sampling} approximates the desired random variables.

\begin{proposition} \label{prop:joint_law}
Let $\bfZ := \left( Z_1, \ldots, Z_M,\mathbb{Z}_1,\ldots,\mathbb{Z}_M \right) \in U^{2M}$ be generated by \eqref{eq:Z-new} and \eqref{eq:Z-new2}. Then,
\begin{align*}
\bfZ  \sim\left( \Pi_J \Delta_1 W, \ldots, \Pi_J \Delta_M W, \Pi_J \Delta_1 \mathbb{W}, \ldots, \Pi_J \Delta_M \mathbb{W} \right),
\end{align*}
where $\Pi_J$ is the $U$-orthogonal projection onto $\text{span}(u^1,\ldots, u^J)$.
\end{proposition}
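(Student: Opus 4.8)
The plan is to reduce the statement to the elementary fact that equality in distribution is preserved when a single deterministic measurable map is applied to random vectors having the same law. Indeed, both $\bfZ$ and the target vector are images, under the \emph{same} linear transformation, of their respective ``building blocks'': the pair $\bigl((Z_m)_m,(\tilde{\mathbb{Z}}_m)_m\bigr)$ on the algorithmic side, and the pair $\bigl((\Pi_J\Delta_m W)_m,(\Pi_J\mean{\mathcal{B}_m}_m)_m\bigr)$ on the analytic side. So it suffices to match these blocks jointly and then push them through the common map encoded in \eqref{eq:Z-new2} and \eqref{def:W-aver}.

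First I would identify the blocks. Taking $\{u^j\}$ to be the basis in \eqref{rep:W}, the finite sum $\Pi_J\Delta_m W=\sum_{j=1}^J u^j\,\Delta_m\beta^j$, with $\Delta_m\beta^j:=\beta^j(t_m)-\beta^j(t_{m-1})$, is well defined even though $\Delta_m W$ itself does not converge in $U$. Since $\{\Delta_m\beta^j\}_{m,j}$ is an i.i.d.\ $\mathcal N(0,\tau)$ family, it shares its joint law with $\{\sqrt{\tau}\,\zeta_m^j\}_{m,j}$, whence $(Z_m)_{m=1}^M\sim(\Pi_J\Delta_m W)_{m=1}^M$ in $U^M$. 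Likewise, Corollary~\ref{cor:dist-Bridge} gives each $\mean{\mathcal{B}_m}_m$ the variance operator $\frac{\tau}{12}(u,v)_U$, a scalar multiple of the inner product, so its coordinates $(\mean{\mathcal{B}_m}_m,u^j)_U$ are i.i.d.\ $\mathcal N(0,\tau/12)$; together with the independence of $\mathcal{B}_1,\dots,\mathcal{B}_M$ from Corollary~\ref{cor:Independency-BB}, the whole family $\{(\mean{\mathcal{B}_m}_m,u^j)_U\}_{m,j}$ is i.i.d.\ $\mathcal N(0,\tau/12)$ and hence matches $\{\sqrt{\tau/12}\,\eta_m^j\}_{m,j}$, giving $(\tilde{\mathbb{Z}}_m)_{m=1}^M\sim(\Pi_J\mean{\mathcal{B}_m}_m)_{m=1}^M$. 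Because the two algorithmic families $\{\zeta_m^j\}$ and $\{\eta_m^j\}$ are independent by construction, while $(\Delta_m W)_m$ and $(\mean{\mathcal{B}_m}_m)_m$ are independent by Corollary~\ref{cor:Independency-BB}, the marginal matches upgrade to the joint identity
\begin{align*}
\bigl((Z_m)_{m=1}^M,(\tilde{\mathbb{Z}}_m)_{m=1}^M\bigr)\sim\bigl((\Pi_J\Delta_m W)_{m=1}^M,(\Pi_J\mean{\mathcal{B}_m}_m)_{m=1}^M\bigr)
\end{align*}
of $U^{2M}$-valued random vectors.

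Finally I would apply the common map. By \eqref{eq:Z-new2}, $\bfZ$ is obtained from the block $\bigl((Z_m)_m,(\tilde{\mathbb{Z}}_m)_m\bigr)$ by keeping the first $M$ coordinates and setting the last $M$ equal to $\tfrac12(Z_m+Z_{m-1})+\tilde{\mathbb{Z}}_m-\tilde{\mathbb{Z}}_{m-1}$. Applying the linear bounded projection $\Pi_J$ to \eqref{def:W-aver} shows that the target vector arises from $\bigl((\Pi_J\Delta_m W)_m,(\Pi_J\mean{\mathcal{B}_m}_m)_m\bigr)$ by the very same prescription, since $\Pi_J\Delta_m\mathbb{W}=\tfrac12(\Pi_J\Delta_m W+\Pi_J\Delta_{m-1}W)+\Pi_J\mean{\mathcal{B}_m}_m-\Pi_J\mean{\mathcal{B}_{m-1}}_{m-1}$, with the matching conventions $Z_0=\tilde{\mathbb{Z}}_0=0$ and $\Delta_0 W=\mean{\mathcal{B}_0}_0=0$. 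As equality in distribution passes through a common measurable map, the block-level identity of the previous step yields the claim.

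The main obstacle is not any computation but the bookkeeping needed to make every match \emph{joint} rather than merely marginal. The only genuine inputs are that the coordinates of each $\mean{\mathcal{B}_m}_m$ in the basis $\{u^j\}$ are i.i.d.---which relies on its variance operator being a scalar multiple of $(\cdot,\cdot)_U$---and the two independence statements of Corollary~\ref{cor:Independency-BB} (among the bridges, and between the bridges and the increments). Once these are secured, the linear-pushforward argument is automatic.
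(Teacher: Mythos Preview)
Your proof is correct and follows essentially the same approach as the paper: first match the building blocks $Z_m\sim\Pi_J\Delta_m W$ and $\tilde{\mathbb{Z}}_m\sim\Pi_J\mean{\mathcal{B}_m}_m$ (jointly, using the independence from Corollary~\ref{cor:Independency-BB}), then push both sides through the same linear transformation encoded in~\eqref{def:W-aver} and~\eqref{eq:Z-new2}. The paper's proof is a two-line sketch that refers back to the argument of Lemma~\ref{lem:distribution}; you have carried out precisely that argument with all the bookkeeping made explicit.
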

\begin{proof}
First, we need to observe that $Z_m \sim \Pi_J \Delta_m W$ and $\tilde{Z}_m \sim \Pi_J \mean{\mathcal{B}_m }_m$. Then, the statement follows similarly to the proof of Lemma~\ref{lem:distribution}. 
\end{proof}

\begin{remark}
Proposition~\ref{prop:joint_law} ensures that we can compare our algorithm to the classical Euler-Maruyama discretization~\eqref{eq:algo-EM} on an equidistant time grid. It can be adjusted to also match non-equidistant grids.
\end{remark}

\section{Simulations} \label{sec:Simulations}
In this section we perform numerical simulations to test our algorithms~\eqref{eq:algo} and~\eqref{algo:2nd}. We denote the solution of~\eqref{eq:algo} as $\bfv^{\text{Half}}$ and the solution of~\eqref{algo:2nd} as $\bfv^{\text{Full}}$. Additionally, we compare our algorithms to the classical Euler-Maruyama discretization of~\eqref{eq:p-Laplace}. That reads, find $\bfv \in (V_h)^{M+1}$ such that for all $\xi_h \in V_h$, $m \geq 1$ and $\mathbb{P}$-a.s.
\begin{align} \label{eq:algo-EM}
\left(v_m- v_{m-1}, \xi_h \right) + \tau \left( S(\nabla v_m), \nabla \xi_h \right) = \left( G(v_{m-1}) \Delta_m W, \xi_h \right).
\end{align}
The solution to~\eqref{eq:algo-EM} is called $\bfv^{\text{EM}}$. So far it is unknown whether the approximation $\bfv^{\text{EM}}$ converges to the solution $u$ of~\eqref{eq:p-Laplace} in a suitable sense if the regularity of the map $t \mapsto V(\nabla u_t)$ is limited. Originally the Euler-Maruyama scheme has been introduced for stochastic differential equations. In this context much more is known and convergence has been proven, see e.g. the book of Kloeden and Platen \cite[Section~$9.5$]{MR1214374}.  However, when dropping the Lipschitz assumption on the coefficients, divergence with positive probability has been obtained in \cite{MR2795791}.

We are particularly interested in the experimental study of the following questions:
\begin{enumerate}
\item Do the algorithms~\eqref{eq:algo} and~\eqref{algo:2nd} approximate mean-values or point-values?
\item How does the Euler-Maruyama scheme~\eqref{eq:algo-EM} compares to~\eqref{eq:algo} and~\eqref{algo:2nd} in terms of time and space convergence?
\item How do the different error quantities~\eqref{eq:Rates},~\eqref{eq:aver_conv1} and~\eqref{eq:aver_conv2} for the gradient relate to each other?
\item How sensitive are the algorithms with respect to the parameter $p$?
\end{enumerate}

 All simulations are done with the help of the open source tool for solving partial differential equations FEniCS~\cite{LoggMardalWells12}.

\subsection{An explicit solution}
In the linear case, $p=2$, with a linear right hand side
\begin{align}
G(v) \Delta W = \lambda v \, \Delta \beta^1,
\end{align}
for some $\lambda \in \R$, it is possible to find an explicit solution. If we start the evolution defined by~\eqref{eq:p-Laplace} in an eigenfunction of the Laplace operator, the dynamics becomes simpler. Let $\mathcal{O} = (0,1)^2$, $T = 1$ and $u_0(x) = \sin(\pi x_1) \sin( \pi x_2)$. Note that $u_0$ is an eigenfunction of the $2$-Laplacian with homogeneous Dirichlet data and corresponding eigenvalue $\mu = 2 \pi^2$. The unique solution to~\eqref{eq:p-Laplace} is given by
\begin{align}
\begin{aligned}
u(\omega,t,x) &= \exp \left\{ - \left( \frac{\lambda^2}{2}+  \mu \right)t + \lambda \beta^1(\omega,t)\right\} u_0(x).
\end{aligned}
\end{align}
Similarly, we can give a closed expression for the solution $u_h$ to the space-discrete equation
\begin{align} \label{eq:Laplace-semidisc}
\dd  \left( u_h, \xi_h \right) + \left(\nabla u_h, \nabla \xi_h \right)  \dt = \lambda \left(u_h,\xi_h \right) \dd \beta^1(t).
\end{align}
This is equivalent to the system of linear stochastic differential equations
\begin{align} \label{eq:Laplace-semidiscMat}
\dd M \bfu + S \bfu \dt = \lambda M \bfu \dd \beta^1(t),
\end{align}
where $M_{i,j} = \left( \xi_h^j, \xi_h^i \right)$ and $S_{i,j} = \left( \nabla \xi_h^j, \nabla \xi_h^i \right)$ is the mass respectively the stiffness matrix and $\set{\xi_h^i}$ form a basis of $V_{h}$. The eigenpairs of the discretized Laplacian are related to the linear system
\begin{align} \label{eq:Eigenvalue}
S \bfu = \mu M \bfu \Leftrightarrow M^{-1} S \bfu = \mu \bfu.
\end{align}
Let $(\mu_h,\bfu_h) \in (0,\infty) \times \R^{\abs{V_{h}}}$ be a solution to~\eqref{eq:Eigenvalue}, then the solution to~\eqref{eq:Laplace-semidisc} started in $u_h(0) = \bfu_h \cdot \bfxi_h$ is given by
\begin{align} \label{eq:SemiDisSolution}
u_h(\omega,t,x) = \exp \left\{ -\left( \frac{\lambda^2}{2} + \mu_h \right)t + \lambda \beta^1(\omega,t) \right\} u_h(0).
\end{align}
The main advantage of having an analytic solution of the space discrete equation is, that it rules out any space discretization errors. 

To accurately compare continuous processes and discrete vectors, one needs to either lift the vector to a process or project the process to a vector. We do the latter approach and evaluate the continuous process $u_h$ on the equidistant partition of $I$. This leads to the definition 
\begin{align}\label{def:Point}
\bfu^{\text{Point}} := \left( u_h(t_m) \right)_{m=1}^M.
\end{align}
Since the algorithms~\eqref{eq:algo} and~\eqref{algo:2nd} approximate the mean value of the analytic solution, we define 
\begin{align} \label{def:Aver-exact}
\bfu_{\text{exact}}^{\text{Aver}} := \left( \mean{u_h}_{I_m} \right)_{m=1}^M.
\end{align}

Although we know the exact solution, the time averages of the exact solution are non-treatable without knowledge of the full trajectory of the Brownian motion $\beta^1$. Numerically, we substitute $\mean{u_h}_{I_m}$ by a Riemann sum approximation, i.e. we fix an equidistant partition $\set{[t_{m,k-1},t_{m,k}]}_{k=1}^r$ of $I_m$ with resolution $r \in \mathbb{N}$ and define
\begin{align} \label{def:Aver-approx}
\bfu^{\text{Aver}} := \left( \frac{1}{r} \sum_{k=1}^r u_h(t_{m,k}) \right)_{m=1}^M.
\end{align}
The approximation quality is measured in the error quantities
\begin{subequations} \label{error:glob}
\begin{align} \label{error:1st}
\mathcal{E}( \bfu,\bfv ) &:= \mathbb{E} \left[ \max_{m=1,\ldots,M} \norm{u_m - v_m}_{L^2_x}^2 \right], \\ \label{error:2nd}
\mathcal{V}( \bfu, \bfv ) &:= \mathbb{E} \left[ \sum_{m=1}^M \tau \norm{\nabla ( u_m -  v_m) }_{L^2_x}^2 \right],
\end{align}
\end{subequations}
where $\bfu \in \set{ \bfu^{\text{Point}}, \bfu^{\text{Aver}}}$ and $\bfv \in \set{\bfv^{\text{EM}},\bfv^{\text{Full}},\bfv^{\text{Half}} } $.

In Figure~\ref{fig:SemiDiscrete} we plot the time convergence of the error quantities~\eqref{error:1st} and~\eqref{error:2nd}. We approximate the expectation by the Monte-Carlo method with $20$ samples. Additionally, we let $\lambda = 1$ and $V_h$ to be the space of piecewise linear, continuous elements on a uniform mesh with $\abs{V_h} = 121$. The average values~\eqref{def:Aver-approx} are approximated by $r = 10$.

The numerical results support that $\bfv^{\text{EM}}$ approximates the solution on the grid points, while $\bfv^{\text{Full}}$ and $\bfv^{\text{Half}}$ approximate the average values of the solution. The gap is due to the difference
\begin{align*}
\mathbb{E} \left[ \norm{\mean{u}_{I_m} - u(t_m)}_{L^2_x}^2 \right] \geq c \tau \norm{u_0}_{L^2_x}^2.
\end{align*} 
Initially, we observe a preasymptotic effect. It stabilizes at the time scale $\tau \approx 10^{-3}$. Afterwards the predicted convergence speed of order~$1$ is achieved.
%


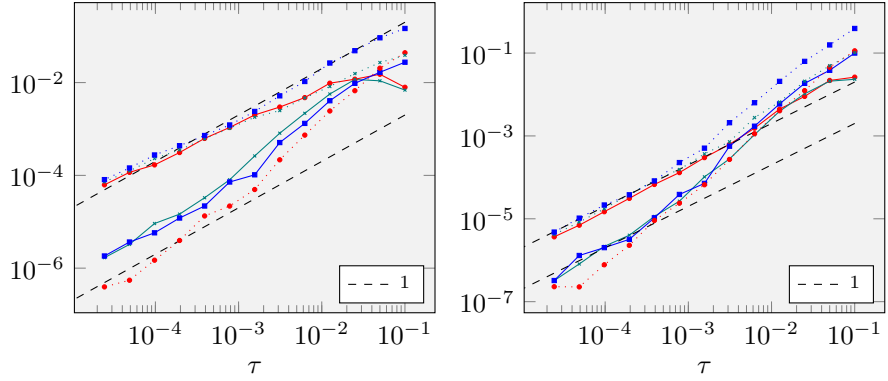
\begin{figure}
\begin{center}
\begin{tikzpicture}
\begin{axis}[
clip=false,
width=.5\textwidth,
height=.45\textwidth,
xmode = log,
ymode = log,
cycle multi list={\nextlist MyColors},
xlabel={$\tau$},
scale = {1},
clip = true,
legend cell align=left,
legend style={legend columns=1,legend pos= south east,font=\fontsize{7}{5}\selectfont}
]
	\addplot table [x=dt,y=LinftyL2aver] {Data/p=2 explicit solution/averClassic_Expmult_p_2_hmax_0.14142135623730964_dtfine_2.44140625e-05_glinCrop.txt};
	\addplot table [x=dt,y=LinftyL2aver] {Data/p=2 explicit solution/averFull_Expmult_p_2_hmax_0.14142135623730964_dtfine_2.44140625e-05_glinCrop.txt};
	\addplot table [x=dt,y=LinftyL2aver] {Data/p=2 explicit solution/averHalf_Expmult_p_2_hmax_0.14142135623730964_dtfine_2.44140625e-05_glinCrop.txt};
	
	\addplot table [x=dt,y=LinftyL2right] {Data/p=2 explicit solution/averClassic_Expmult_p_2_hmax_0.14142135623730964_dtfine_2.44140625e-05_glinCrop.txt};
	\addplot table [x=dt,y=LinftyL2right] {Data/p=2 explicit solution/averFull_Expmult_p_2_hmax_0.14142135623730964_dtfine_2.44140625e-05_glinCrop.txt};
	\addplot table [x=dt,y=LinftyL2right] {Data/p=2 explicit solution/averHalf_Expmult_p_2_hmax_0.14142135623730964_dtfine_2.44140625e-05_glinCrop.txt};

\addplot[dashed,sharp plot,update limits=false] coordinates {(1e-1,2e-1) (1e-6,2e-6)};
\addplot[dashed,sharp plot,update limits=false] coordinates {(1e-1,2e-3) (1e-6,2e-8)};

	\legend{
	,,,,,,
	{$1$},
	};
\end{axis}
\end{tikzpicture}
\begin{tikzpicture}
\begin{axis}[
clip=false,
width=.5\textwidth,
height=.45\textwidth,
xmode = log,
ymode = log,
cycle multi list={\nextlist MyColors},
xlabel={$\tau$},
scale = {1},
clip = true,
legend cell align=left,
legend style={legend columns=1,legend pos= south east,font=\fontsize{7}{5}\selectfont}
]
		\addplot table [x=dt,y=L2Vaver] {Data/p=2 explicit solution/averClassic_Expmult_p_2_hmax_0.14142135623730964_dtfine_2.44140625e-05_glinCrop.txt};
	\addplot table [x=dt,y=L2Vaver] {Data/p=2 explicit solution/averFull_Expmult_p_2_hmax_0.14142135623730964_dtfine_2.44140625e-05_glinCrop.txt};
	\addplot table [x=dt,y=L2Vaver] {Data/p=2 explicit solution/averHalf_Expmult_p_2_hmax_0.14142135623730964_dtfine_2.44140625e-05_glinCrop.txt};
	
	\addplot table [x=dt,y=L2Vright] {Data/p=2 explicit solution/averClassic_Expmult_p_2_hmax_0.14142135623730964_dtfine_2.44140625e-05_glinCrop.txt};
	\addplot table [x=dt,y=L2Vright] {Data/p=2 explicit solution/averFull_Expmult_p_2_hmax_0.14142135623730964_dtfine_2.44140625e-05_glinCrop.txt};
	\addplot table [x=dt,y=L2Vright] {Data/p=2 explicit solution/averHalf_Expmult_p_2_hmax_0.14142135623730964_dtfine_2.44140625e-05_glinCrop.txt};

\addplot[dashed,sharp plot,update limits=false] coordinates {(1e-1,2e-2) (1e-6,2e-7)};
\addplot[dashed,sharp plot,update limits=false] coordinates {(1e-1,2e-3) (1e-6,2e-8)};

	\legend{
	,,,,,,
	{$1$},
	};
\end{axis}
\end{tikzpicture}
\caption{Time convergence of $\bfv^{\text{EM}}$~(red), $\bfv^{\text{Half}}$~(blue) and $\bfv^{\text{Full}}$~(green) towards $\bfu^{\text{Point}}$~(dashed) respectively $\bfu^{\text{Aver}}$~(solid) measured in the error terms $\mathcal{E}$~(left) and $\mathcal{V}$~(right). }
\label{fig:SemiDiscrete}
\end{center}
\end{figure}

\subsection{Beyond known solutions}
In general, a major obstacle is the absence of an analytic solution to the equation~\eqref{eq:p-Laplace}. In particular, the distance of the numerical solution and the analytic solution as presented in~\eqref{eq:Rates}, \eqref{eq:Rates2} respectively~\eqref{eq:Rates-algo-2nd} and~\eqref{eq:Rates-algo-2nd-2} are non-computable. 

To overcome this difficulty we measure the error of a fine reference approximation $\bfv_f$ and a coarse approximation $\bfv_c$. Both, $\bfv_f$ and $\bfv_c$, are generated via the same algorithm (either~\eqref{eq:algo},~\eqref{algo:2nd} or~\eqref{eq:algo-EM}) on a fine respectively coarse scale. 
Let $h_c \geq h_f >0$ be coarse respectively fine space mesh sizes. Similarly, let $M_c, M_f \in \mathbb{N}$ be coarse respectively fine time discretization parameters with corresponding timestep sizes $\tau_c$, $\tau_f$. For simplicity, we assume $M_c /M_f = r \in \mathbb{N}$. Now, the coarse intervals are generated by the fine ones, i.e.
\begin{align*}
I_m^c &:= [(m-1)\tau_c,m\tau_c] = \bigcup_{k=1}^r I_{(m-1)r + k}^f.
\end{align*}
The coarse averaging operator can be decomposed into the fine averaging operator 
\begin{align*}
\mean{u}_{I_m^c} = \dashint_{I_m^c} u_\nu \dd \nu =  \frac{1}{r \tau_f} \sum_{k=1}^r \int_{I_{(m-1)r + k}^f} u_\nu \dd \nu  = \frac{1}{r} \sum_{k=1}^r \mean{u}_{I_{(m-1)r + k}^f}.
\end{align*}
To accurately substitute the analytic averaging operator, we define the discrete time averaging operator
\begin{align*}
\mean{\bfv_f}_{m}^r:=  \frac{1}{r} \sum_{k=1}^r v_{(m-1)r + k}^f.
\end{align*}
Additionally, we define the error quantities
\begin{subequations} \label{LinfL2-discrete}
\begin{align}
d_{L^\infty L^2}^{\,\text{aver}}( \bfv_f, \bfv_c) &:= \mathbb{E} \left[ \max_{m=1,\ldots, M_c} \norm{ \mean{\bfv_f}_{m}^r - v_m^c }_{L^2_x}^2 \right], \\
 d_{L^\infty L^2}^{\,\text{point}}( \bfv_f, \bfv_c) &:= \mathbb{E} \left[ \max_{m=1,\ldots, M_c} \norm{ v_{mr}^f - v_m^c }_{L^2_x}^2 \right],
\end{align}
\end{subequations}
and
\begin{subequations}\label{L2V-discrete}
\begin{align}
d_{L^2 V}^{\,\text{classic}}( \bfv_f, \bfv_c) &:= \mathbb{E} \left[ \sum_{m=1}^{M_c} \frac{\tau_c}{r} \sum_{k=1}^r \norm{V(\nabla v^f_{(m-1)r + k}) - V(\nabla v^c_m )}_{L^2_x}^2 \right], \\
d_{L^2 V}^{\,\text{inner}}( \bfv_f, \bfv_c) &:= \mathbb{E} \left[ \sum_{m=1}^{M_c} \tau_c \norm{V(\nabla \mean{\bfv_f}_{m}^r) - V(\nabla v^c_m )}_{L^2_x}^2 \right], \\
d_{L^2 V}^{\,\text{outer}}( \bfv_f, \bfv_c) &:= \mathbb{E} \left[ \sum_{m=1}^{M_c} \tau_c \norm{\mean{ V(\nabla \bfv_f)}_m^r - V(\nabla v^c_m )}_{L^2_x}^2 \right].
\end{align}
\end{subequations}

\subsection{Joint sampling on fine and coarse scales}
It is crucial to use the same stochastic input when computing $\bfv_f$ and $\bfv_c$. This can be done in two different ways. Either one first samples coarse stochastic data and a posteriori samples the fine data based on the conditional probabilities of the coarse one, or we can sample the fine stochastic input and try to reconstruct the coarse stochastic data. The latter approach is more suitable for the averaged increments. 
\begin{lemma} \label{lem:Coarse-reconstruction}
Let $M_f, M_c \in \mathbb{N}$. Assume $M_f = r M_c$ for some $r \in \mathbb{N}$. Then
\begin{subequations} \label{eq:Rel-Coarse-fine-full}
\begin{align}\label{eq:Rel-Coarse-fine}
\Delta_1^c \mathbb{W} &= \sum_{l=1}^r \left(1- \frac{l-1}{r} \right) \Delta_l^f \mathbb{W}, \\
\Delta^c_j \mathbb{W} &=  \sum_{l=0}^{r-1} \frac{l+1}{r} \Delta_{rj - l}^f \mathbb{W} +\sum_{l=0}^{r-2} \left(1- \frac{l+1}{r }\right) \Delta_{r(j-1) -l}^f \mathbb{W},
\end{align}
for $j \in \set{2,\ldots, M_c}$.
\end{subequations}
\end{lemma}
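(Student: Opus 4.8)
The plan is to reduce everything to the single block-averaging identity already recorded above, namely that for every coarse index $j$ the coarse average decomposes as
\begin{align*}
\mean{W}_{I_j^c} = \frac{1}{r}\sum_{k=1}^r \mean{W}_{I_{(j-1)r+k}^f},
\end{align*}
and then to re-expand each fine average as a partial sum of fine increments. Writing $\Delta_m^f\mathbb{W} = \mean{W}_{I_m^f} - \mean{W}_{I_{m-1}^f}$ with the convention $\mean{W}_{I_0^f} = 0$, telescoping yields $\mean{W}_{I_m^f} = \sum_{i=1}^m \Delta_i^f\mathbb{W}$. Substituting this into the block-averaging identity and exchanging the order of the two finite summations (discrete Fubini) turns each coarse increment into a single linear combination of fine increments; the whole argument then amounts to bookkeeping of the coefficient attached to each $\Delta_i^f\mathbb{W}$. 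Since every operation is linear and pathwise, the $U$-valued nature of $W$ plays no role.

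For the initial step $j=1$ I would use $\Delta_1^c\mathbb{W} = \mean{W}_{I_1^c}$ (as $\mean{W}_0 = 0$) and compute
\begin{align*}
\Delta_1^c\mathbb{W} = \frac{1}{r}\sum_{k=1}^r \mean{W}_{I_k^f} = \frac{1}{r}\sum_{k=1}^r \sum_{i=1}^{k}\Delta_i^f\mathbb{W} = \frac{1}{r}\sum_{i=1}^r (r-i+1)\,\Delta_i^f\mathbb{W},
\end{align*}
where the last equality swaps the order of summation and counts, for each $i$, the indices $k\in\set{i,\dots,r}$. Since $\frac{r-i+1}{r} = 1 - \frac{i-1}{r}$, relabelling $i$ as $l$ gives the first identity in \eqref{eq:Rel-Coarse-fine-full}.

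For $j\geq 2$ I would write $r\,\Delta_j^c\mathbb{W}$ as the difference of the two fine block-sums over the index windows $\set{(j-1)r+1,\dots,jr}$ and $\set{(j-2)r+1,\dots,(j-1)r}$, insert the telescoped expression for each fine average, and again swap the order of summation. The coefficient of a fixed increment $\Delta_i^f\mathbb{W}$ is then the number of $k$ with $(j-1)r+k\ge i$ minus the number of $k$ with $(j-2)r+k\ge i$. A short case distinction handles the three regimes: for $i\le (j-2)r+1$ both counts equal $r$ and the increment drops out; for the upper window $(j-1)r<i\le jr$ the second count is zero and, setting $i=rj-l$, the coefficient becomes $\tfrac{l+1}{r}$ for $l=0,\dots,r-1$; for the lower window $(j-2)r<i\le (j-1)r$ the first count is $r$ and, setting $i=r(j-1)-l$, the coefficient becomes $1-\tfrac{l+1}{r}$ for $l=0,\dots,r-1$, where the term $l=r-1$ vanishes identically so that the sum effectively stops at $r-2$. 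Collecting the two windows gives the second identity in \eqref{eq:Rel-Coarse-fine-full}.

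I expect the only genuine obstacle to be the index bookkeeping: keeping the summation limits aligned through the passage from fine-average indices to increment indices, verifying the cancellation of all increments below $(j-2)r+1$ (so that each coarse increment depends only on the $2r-1$ relevant fine increments), and performing the two reindexings $i\mapsto l$ correctly so that the boundary cases — in particular the vanishing $l=r-1$ term in the lower window — match the stated ranges exactly. Everything else is routine linear algebra on finite sums.
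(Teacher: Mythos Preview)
Your argument is correct and complete. The route, however, differs from the paper's in a useful way. The paper works directly at the level of the integral definitions: it writes $\Delta_1^c\mathbb{W}=\tfrac{1}{\tau_c}\int_0^{\tau_c}W_s\,ds$ and $\Delta_j^c\mathbb{W}=\tfrac{1}{\tau_c}\int_{t_{j-1}^c}^{t_j^c}(W_s-W_{s-\tau_c})\,ds$, splits each integral over the fine subintervals, telescopes the \emph{integrand} $W_s-W_{s-r\tau_f}$ into $r$ pieces $W_{s-l'\tau_f}-W_{s-(l'+1)\tau_f}$, shifts the integration variable to recognise each piece as a fine $\Delta^f\mathbb{W}$, and only then applies discrete Fubini to collect coefficients. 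You instead invoke the block-averaging identity $\mean{W}_{I_j^c}=\tfrac{1}{r}\sum_k\mean{W}_{I^f_{(j-1)r+k}}$ once, telescope the \emph{averages} themselves via $\mean{W}_{I_m^f}=\sum_{i\le m}\Delta_i^f\mathbb{W}$, and reduce the whole computation to counting indices. Your approach is shorter and keeps the argument entirely combinatorial, never touching integrals; the paper's approach is more self-contained in that it does not rely on the block-averaging identity having been stated beforehand. Both yield the same coefficients, and your observation that the $l=r-1$ term in the lower window vanishes is exactly what makes the second sum stop at $r-2$.
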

\begin{proof}
Since $M_f = r M_c$, we have $\tau_c = r \tau_f$. Thus,
\begin{align*}
\Delta_1^c \mathbb{W} &= \frac{1}{\tau_c} \int_{0}^{\tau_c} W_s \ds = \frac{1}{r \tau_f} \int_{0 \tau_f}^{r \tau_f} W_s \ds = \frac{1}{r \tau_f} \sum_{l=1}^r \int_{(l-1)\tau_f}^{l\tau_f} W_s \ds \\
&= \frac{1}{r \tau_f} \sum_{l=1}^r \int_{(l-1)\tau_f}^{l\tau_f} W_s - W_{s-(l-1)\tau_f} \ds +  \frac{1}{r \tau_f} \sum_{l=1}^r \int_{(l-1)\tau_f}^{l\tau_f} W_{s-(l-1)\tau_f} \ds \\
&=: \mathrm{I} + \mathrm{II}.
\end{align*}
Due to the discrete Fubini's theorem
\begin{align*}
\mathrm{I} &= \frac{1}{r \tau_f} \sum_{l=1}^r \int_{(l-1)\tau_f}^{l\tau_f} \sum_{l'=0}^{l-2} W_{s - l' \tau_f} - W_{s-(l'+1)\tau_f} \ds \\
&= \frac{1}{r \tau_f} \sum_{l=1}^r \sum_{l'=0}^{l-2} \int_{(l-l'-1)\tau_f}^{(l-l')\tau_f}  W_{s } - W_{s-\tau_f} \ds\\
&= \frac{1}{r} \sum_{l=1}^r \sum_{l'=0}^{l-2} \Delta_{l-l'}^f \mathbb{W} = \sum_{l=2}^r \left(1+ \frac{1-l}{r} \right) \Delta_l^f \mathbb{W}.
\end{align*}
The second term is easily computed
\begin{align*}
\mathrm{II} = \frac{1}{r \tau_f} \sum_{l=1}^r \int_{0}^{\tau_f} W_{s} \ds = \Delta_1^f \mathbb{W}.
\end{align*}
Overall,
\begin{align*}
\Delta_1^c \mathbb{W} = \sum_{l=1}^r \left(1+ \frac{1-l}{r} \right) \Delta_l^f \mathbb{W}.
\end{align*}
Let $j \in \set{2,\ldots,M_c}$. Since $t_{j}^c = t_{rj}^f$, it holds
\begin{align*}
\Delta^c_j \mathbb{W} &= \frac{1}{\tau_c} \int_{t_{j-1}^c}^{t_j^c} W_s - W_{s-\tau_c} \ds = \frac{1}{r \tau_f} \int_{t_{r(j-1)}^f}^{t_{rj}^f} W_s - W_{s-r\tau_f} \ds \\
&= \frac{1}{r \tau_f} \sum_{l=0}^{r-1}  \int_{t_{rj - (l + 1)}^f}^{t_{r j - l}^f} \sum_{l' = 0 }^{r-1} W_{s-l' \tau_f} - W_{s-(l'+1) \tau_f} \ds \\
&= \frac{1}{r \tau_f} \sum_{l=0}^{r-1} \sum_{l' = 0 }^{r-1} \int_{t_{rj - (l + l' + 1)}^f}^{t_{r j - (l + l') }^f}  W_{s} - W_{s- \tau_f} \ds = \frac{1}{r} \sum_{l=0}^{r-1} \sum_{l' = 0 }^{r-1} \Delta_{rj - (l+l')}^f \mathbb{W}.
\end{align*}
The discrete Fubini's theorem implies
\begin{align*}
\sum_{l=0}^{r-1} \sum_{l' = 0 }^{r-1} \Delta_{rj - (l+l')}^f \mathbb{W} = \sum_{l=0}^{r-1} (l+1) \Delta_{rj - l}^f \mathbb{W} +\sum_{l=0}^{r-2} (r-(l+1)) \Delta_{r(j-1) -l}^f \mathbb{W}.
\end{align*}
Therefore,
\begin{align*}
\Delta^c_j \mathbb{W} =  \sum_{l=0}^{r-1} \frac{l+1}{r} \Delta_{rj - l}^f \mathbb{W} +\sum_{l=0}^{r-2} \left(1- \frac{l+1}{r }\right) \Delta_{r(j-1) -l}^f \mathbb{W}.
\end{align*}
The claim is proved.
\end{proof}

\begin{remark}
The reconstruction formula \eqref{eq:Rel-Coarse-fine-full} is the key ingredient, why it is possible to compare fine and coarse numerical solutions in an efficient way. If one tries to establish a corresponding formula for randomized algorithms as proposed in \cite{MR4298537}, this task becomes more challenging.
\end{remark}

\subsection{Simulation: Unknown solution}
Let $\mathcal{O} = (0,1)^2$ and $T = 1$. We choose $p \in \set{1.5,3}$, $u_0(x) = \sin(\pi x_1) \sin(\pi x_2)$ and
\begin{align} \label{eq:G-first}
G(u)\Delta W &:= \underbrace{\sin(\pi x_1)x_2  u}_{=: g_1(x,u)} \Delta \beta^1 + \underbrace{\sin(\pi x_2)x_1 u}_{=: g_2(x,u)} \Delta \beta^2.
\end{align}
$V_h$ denotes the space of piece wise linear elements with zero boundary values on a triangulation $\mathcal{T}_h$ of $\mathcal{O}$. We initialize the coarse triangulation $\mathcal{T}_{h_c}$ as a uniform triangulation of $\mathcal{O}$ such that $\abs{V_{h_c}} = 121$ and $h_c \approx 1.4*10^{-1}$. $\mathcal{T}_{h_f}$ is generated by three uniform refinements of $\mathcal{T}_{h_c}$. Then $\abs{V_{h_f}} = 6561$ and $h_f \approx 1.7*10^{-2}$. For the time discretization we use $M_f = 1280$ and $M_c = 40$. Therefore $\tau_f \approx 7.8*10^{-4}$ and $\tau_c \approx 2.5* 10^{-2}$. We measure the error of the fine numerical solution $\bfv_f \in \set{\bfv_f^{\text{EM}},\bfv_f^{\text{Half}},\bfv_f^{\text{Full}}}$ versus the coarse numerical solution $\bfv_c \in \set{\bfv_c^{\text{EM}},\bfv_c^{\text{Half}},\bfv_c^{\text{Full}}} $ of the same algorithm in the error quantities~\eqref{LinfL2-discrete} and~\eqref{L2V-discrete}. The expectation is approximated by the Monte-Carlo method with $20$ samples. 

In Figure~\ref{fig:Unknown-p=3} respectively Figure~\ref{fig:Unknown-p=1_5} we plot the evolution of the error for $p=3$ respectively $p = 1.5$. In both cases we observe linear convergence. This indicates that on the used discretization scale the space error dominates the time error. We do not see a substantial difference in the gradient error quantities. Additionally, $\bfv^{\text{EM}}$ measured in the point distance $d_{L^\infty L^2}^{\,\text{point}}$ and $\bfv^{\text{Half}}$ measured in the averaged distance $d_{L^\infty L^2}^{\,\text{aver}}$ perform equally well. 

If $p=3$ then $\bfv^{\text{Full}}$ behaves similarly in both error terms and performs slightly worse than $\bfv^{\text{EM}}$ and $\bfv^{\text{Half}}$. Contrary in the case $p=1.5$, while still performing slightly worse compared to $\bfv^{\text{EM}}$ and $\bfv^{\text{Half}}$, $\bfv^{\text{Full}}$ is better approximated in $d_{L^\infty L^2}^{\,\text{aver}}$ than in $d_{L^\infty L^2}^{\,\text{point}}$. This indicates that at least in the singular setting $\bfv^{\text{Full}}$ also approximates average values.

\begin{figure}
\begin{center}
\begin{tikzpicture}
\begin{axis}[
clip=false,
width=.5\textwidth,
height=.45\textwidth,
xmode = log,
ymode = log,
cycle multi list={\nextlist MyColors},
xlabel={$\tau \approx h$},
scale = {1},
clip = true,
legend cell align=left,
legend style={legend columns=1,legend pos= south east,font=\fontsize{7}{5}\selectfont}
]
	\addplot table [x=dt,y=LinftyL2aver] {Data/p=3 unknown/Classic/h=tau/averClassic_Expmult_p_3_hmax_0.01767766952966378_dtfine_0.00078125_gScrop.txt};
	\addplot table [x=dt,y=LinftyL2aver] {Data/p=3 unknown/Full/h=tau/averFull_Expmult_p_3_hmax_0.01767766952966378_dtfine_0.00078125_gScrop.txt};
	\addplot table [x=dt,y=LinftyL2aver] {Data/p=3 unknown/Half/h=tau/averHalf_Expmult_p_3_hmax_0.01767766952966378_dtfine_0.00078125_gScrop.txt};
	
	\addplot table [x=dt,y=LinftyL2clas] {Data/p=3 unknown/Classic/h=tau/averClassic_Expmult_p_3_hmax_0.01767766952966378_dtfine_0.00078125_gScrop.txt};
	\addplot table [x=dt,y=LinftyL2clas] {Data/p=3 unknown/Full/h=tau/averFull_Expmult_p_3_hmax_0.01767766952966378_dtfine_0.00078125_gScrop.txt};
	\addplot table [x=dt,y=LinftyL2clas] {Data/p=3 unknown/Half/h=tau/averHalf_Expmult_p_3_hmax_0.01767766952966378_dtfine_0.00078125_gScrop.txt};
	
\addplot[dashed,sharp plot,update limits=false] coordinates {(1e-1,5e-2) (1e-3,5e-6)};

	\legend{
	,,,,,,
	{$2$},
	};
\end{axis}
\end{tikzpicture}
\begin{tikzpicture}
\begin{axis}[
clip=false,
width=.5\textwidth,
height=.45\textwidth,
xmode = log,
ymode = log,
cycle multi list={\nextlist MyColors},
xlabel={$\tau \approx h$},
scale = {1},
clip = true,
legend cell align=left,
legend style={legend columns=1,legend pos= south east,font=\fontsize{7}{5}\selectfont}
]
		\addplot table [x=dt,y=L2V1] {Data/p=3 unknown/Classic/h=tau/averClassic_Expmult_p_3_hmax_0.01767766952966378_dtfine_0.00078125_gScrop.txt};
	\addplot table [x=dt,y=L2V1] {Data/p=3 unknown/Full/h=tau/averFull_Expmult_p_3_hmax_0.01767766952966378_dtfine_0.00078125_gScrop.txt};
	\addplot table [x=dt,y=L2V1] {Data/p=3 unknown/Half/h=tau/averHalf_Expmult_p_3_hmax_0.01767766952966378_dtfine_0.00078125_gScrop.txt};
	
	\addplot table [x=dt,y=L2V2] {Data/p=3 unknown/Classic/h=tau/averClassic_Expmult_p_3_hmax_0.01767766952966378_dtfine_0.00078125_gScrop.txt};
	\addplot table [x=dt,y=L2V2] {Data/p=3 unknown/Full/h=tau/averFull_Expmult_p_3_hmax_0.01767766952966378_dtfine_0.00078125_gScrop.txt};
	\addplot table [x=dt,y=L2V2] {Data/p=3 unknown/Half/h=tau/averHalf_Expmult_p_3_hmax_0.01767766952966378_dtfine_0.00078125_gScrop.txt};
	
		\addplot table [x=dt,y=L2V3] {Data/p=3 unknown/Classic/h=tau/averClassic_Expmult_p_3_hmax_0.01767766952966378_dtfine_0.00078125_gScrop.txt};
	\addplot table [x=dt,y=L2V3] {Data/p=3 unknown/Full/h=tau/averFull_Expmult_p_3_hmax_0.01767766952966378_dtfine_0.00078125_gScrop.txt};
	\addplot table [x=dt,y=L2V3] {Data/p=3 unknown/Half/h=tau/averHalf_Expmult_p_3_hmax_0.01767766952966378_dtfine_0.00078125_gScrop.txt};
	
\addplot[dashed,sharp plot,update limits=false] coordinates {(1e-1,5e-1) (1e-3,5e-5)};

	\legend{
	,,,,,,,,,
	{$2$},
	};
\end{axis}
\end{tikzpicture}
\begin{tikzpicture}
\begin{axis}[
clip=false,
width=.5\textwidth,
height=.45\textwidth,
xmode = log,
ymode = log,
cycle multi list={\nextlist MyColors},
xlabel={$\tau \approx h^2$},
scale = {1},
clip = true,
legend cell align=left,
legend style={legend columns=1,legend pos= south east,font=\fontsize{7}{5}\selectfont}
]
	\addplot table [x=dt,y=LinftyL2aver] {Data/p=3 unknown/Classic/h=sqrt(tau)/averClassic_Expmult_p_3_hmax_0.01767766952966378_dtfine_0.00078125_gScrop.txt};
	\addplot table [x=dt,y=LinftyL2aver] {Data/p=3 unknown/Full/h=sqrt(tau)/averFull_Expmult_p_3_hmax_0.01767766952966378_dtfine_0.00078125_gScrop.txt};
	\addplot table [x=dt,y=LinftyL2aver] {Data/p=3 unknown/Half/h=sqrt(tau)/averHalf_Expmult_p_3_hmax_0.01767766952966378_dtfine_0.00078125_gScrop.txt};
	
	\addplot table [x=dt,y=LinftyL2clas] {Data/p=3 unknown/Classic/h=sqrt(tau)/averClassic_Expmult_p_3_hmax_0.01767766952966378_dtfine_0.00078125_gScrop.txt};
	\addplot table [x=dt,y=LinftyL2clas] {Data/p=3 unknown/Full/h=sqrt(tau)/averFull_Expmult_p_3_hmax_0.01767766952966378_dtfine_0.00078125_gScrop.txt};
	\addplot table [x=dt,y=LinftyL2clas] {Data/p=3 unknown/Half/h=sqrt(tau)/averHalf_Expmult_p_3_hmax_0.01767766952966378_dtfine_0.00078125_gScrop.txt};
	
\addplot[dashed,sharp plot,update limits=false] coordinates {(1e-1,5e-2) (1e-3,5e-6)};

	\legend{
	,,,,,,
	{$2$},
	};
\end{axis}
\end{tikzpicture}
\begin{tikzpicture}
\begin{axis}[
clip=false,
width=.5\textwidth,
height=.45\textwidth,
xmode = log,
ymode = log,
cycle multi list={\nextlist MyColors},
xlabel={$\tau \approx h^2$},
scale = {1},
clip = true,
legend cell align=left,
legend style={legend columns=1,legend pos= south east,font=\fontsize{7}{5}\selectfont}
]
		\addplot table [x=dt,y=L2V1] {Data/p=3 unknown/Classic/h=sqrt(tau)/averClassic_Expmult_p_3_hmax_0.01767766952966378_dtfine_0.00078125_gScrop.txt};
	\addplot table [x=dt,y=L2V1] {Data/p=3 unknown/Full/h=sqrt(tau)/averFull_Expmult_p_3_hmax_0.01767766952966378_dtfine_0.00078125_gScrop.txt};
	\addplot table [x=dt,y=L2V1] {Data/p=3 unknown/Half/h=sqrt(tau)/averHalf_Expmult_p_3_hmax_0.01767766952966378_dtfine_0.00078125_gScrop.txt};
	
	\addplot table [x=dt,y=L2V2] {Data/p=3 unknown/Classic/h=sqrt(tau)/averClassic_Expmult_p_3_hmax_0.01767766952966378_dtfine_0.00078125_gScrop.txt};
	\addplot table [x=dt,y=L2V2] {Data/p=3 unknown/Full/h=sqrt(tau)/averFull_Expmult_p_3_hmax_0.01767766952966378_dtfine_0.00078125_gScrop.txt};
	\addplot table [x=dt,y=L2V2] {Data/p=3 unknown/Half/h=sqrt(tau)/averHalf_Expmult_p_3_hmax_0.01767766952966378_dtfine_0.00078125_gScrop.txt};
	
		\addplot table [x=dt,y=L2V3] {Data/p=3 unknown/Classic/h=sqrt(tau)/averClassic_Expmult_p_3_hmax_0.01767766952966378_dtfine_0.00078125_gScrop.txt};
	\addplot table [x=dt,y=L2V3] {Data/p=3 unknown/Full/h=sqrt(tau)/averFull_Expmult_p_3_hmax_0.01767766952966378_dtfine_0.00078125_gScrop.txt};
	\addplot table [x=dt,y=L2V3] {Data/p=3 unknown/Half/h=sqrt(tau)/averHalf_Expmult_p_3_hmax_0.01767766952966378_dtfine_0.00078125_gScrop.txt};
	
\addplot[dashed,sharp plot,update limits=false] coordinates {(1e-1,8e-2) (1e-3,8e-6)};

	\legend{
	,,,,,,,,,
	{$2$},
	};
\end{axis}
\end{tikzpicture}
\caption{Convergence for $p=3$ of $\bfv_c^{\text{EM}}$~(red), $\bfv_c^{\text{Half}}$~(blue) and $\bfv_c^{\text{Full}}$~(green) towards $\bfv_f$ measured in $d_{L^\infty L^2}^{\,\text{aver}}$~(solid, left), $d_{L^\infty L^2}^{\,\text{point}}$~(dashed, left), $d_{L^2 V}^{\,\text{classic}}$~(solid, right), $d_{L^2 V}^{\,\text{inner}}$~(dashed, right) and $d_{L^2 V}^{\,\text{outer}}$~(dash dotted, right). In the top row we use $\tau \approx h$ and in the bottom row $\tau \approx h^2$.}
\label{fig:Unknown-p=3}
\end{center}
\end{figure}

\begin{figure}
\begin{center}
\begin{tikzpicture}
\begin{axis}[
clip=false,
width=.5\textwidth,
height=.45\textwidth,
xmode = log,
ymode = log,
cycle multi list={\nextlist MyColors},
xlabel={$\tau \approx h$},
scale = {1},
clip = true,
legend cell align=left,
legend style={legend columns=1,legend pos= south east,font=\fontsize{7}{5}\selectfont}
]
	\addplot table [x=dt,y=LinftyL2aver] {Data/p=1_5 unknown/Classic/h=tau/averClassic_Expmult_p_1.5_hmax_0.01767766952966378_dtfine_0.00078125_gScrop.txt};
	\addplot table [x=dt,y=LinftyL2aver] {Data/p=1_5 unknown/Full/h=tau/averFull_Expmult_p_1.5_hmax_0.01767766952966378_dtfine_0.00078125_gScrop.txt};
	\addplot table [x=dt,y=LinftyL2aver] {Data/p=1_5 unknown/Half/h=tau/averHalf_Expmult_p_1.5_hmax_0.01767766952966378_dtfine_0.00078125_gScrop.txt};
	
	\addplot table [x=dt,y=LinftyL2clas] {Data/p=1_5 unknown/Classic/h=tau/averClassic_Expmult_p_1.5_hmax_0.01767766952966378_dtfine_0.00078125_gScrop.txt};
	\addplot table [x=dt,y=LinftyL2clas] {Data/p=1_5 unknown/Full/h=tau/averFull_Expmult_p_1.5_hmax_0.01767766952966378_dtfine_0.00078125_gScrop.txt};
	\addplot table [x=dt,y=LinftyL2clas] {Data/p=1_5 unknown/Half/h=tau/averHalf_Expmult_p_1.5_hmax_0.01767766952966378_dtfine_0.00078125_gScrop.txt};
	
\addplot[dashed,sharp plot,update limits=false] coordinates {(1e-1,5e-2) (1e-3,5e-6)};

	\legend{
	,,,,,,
	{$2$},
	};
\end{axis}
\end{tikzpicture}
\begin{tikzpicture}
\begin{axis}[
clip=false,
width=.5\textwidth,
height=.45\textwidth,
xmode = log,
ymode = log,
cycle multi list={\nextlist MyColors},
xlabel={$\tau \approx h$},
scale = {1},
clip = true,
legend cell align=left,
legend style={legend columns=1,legend pos= south east,font=\fontsize{7}{5}\selectfont}
]
		\addplot table [x=dt,y=L2V1] {Data/p=1_5 unknown/Classic/h=tau/averClassic_Expmult_p_1.5_hmax_0.01767766952966378_dtfine_0.00078125_gScrop.txt};
	\addplot table [x=dt,y=L2V1] {Data/p=1_5 unknown/Full/h=tau/averFull_Expmult_p_1.5_hmax_0.01767766952966378_dtfine_0.00078125_gScrop.txt};
	\addplot table [x=dt,y=L2V1] {Data/p=1_5 unknown/Half/h=tau/averHalf_Expmult_p_1.5_hmax_0.01767766952966378_dtfine_0.00078125_gScrop.txt};
	
	\addplot table [x=dt,y=L2V2] {Data/p=1_5 unknown/Classic/h=tau/averClassic_Expmult_p_1.5_hmax_0.01767766952966378_dtfine_0.00078125_gScrop.txt};
	\addplot table [x=dt,y=L2V2] {Data/p=1_5 unknown/Full/h=tau/averFull_Expmult_p_1.5_hmax_0.01767766952966378_dtfine_0.00078125_gScrop.txt};
	\addplot table [x=dt,y=L2V2] {Data/p=1_5 unknown/Half/h=tau/averHalf_Expmult_p_1.5_hmax_0.01767766952966378_dtfine_0.00078125_gScrop.txt};
	
		\addplot table [x=dt,y=L2V3] {Data/p=1_5 unknown/Classic/h=tau/averClassic_Expmult_p_1.5_hmax_0.01767766952966378_dtfine_0.00078125_gScrop.txt};
	\addplot table [x=dt,y=L2V3] {Data/p=1_5 unknown/Full/h=tau/averFull_Expmult_p_1.5_hmax_0.01767766952966378_dtfine_0.00078125_gScrop.txt};
	\addplot table [x=dt,y=L2V3] {Data/p=1_5 unknown/Half/h=tau/averHalf_Expmult_p_1.5_hmax_0.01767766952966378_dtfine_0.00078125_gScrop.txt};
	
\addplot[dashed,sharp plot,update limits=false] coordinates {(1e-1,8e-2) (1e-3,8e-6)};

	\legend{
	,,,,,,,,,
	{$2$},
	};
\end{axis}
\end{tikzpicture}
\begin{tikzpicture}
\begin{axis}[
clip=false,
width=.5\textwidth,
height=.45\textwidth,
xmode = log,
ymode = log,
cycle multi list={\nextlist MyColors},
xlabel={$\tau \approx h^2$},
scale = {1},
clip = true,
legend cell align=left,
legend style={legend columns=1,legend pos= south east,font=\fontsize{7}{5}\selectfont}
]
	\addplot table [x=dt,y=LinftyL2aver] {Data/p=1_5 unknown/Classic/h=sqrt(tau)/averClassic_Expmult_p_1.5_hmax_0.01767766952966378_dtfine_0.00078125_gScrop.txt};
	\addplot table [x=dt,y=LinftyL2aver] {Data/p=1_5 unknown/Full/h=sqrt(tau)/averFull_Expmult_p_1.5_hmax_0.01767766952966378_dtfine_0.00078125_gScrop.txt};
	\addplot table [x=dt,y=LinftyL2aver] {Data/p=1_5 unknown/Half/h=sqrt(tau)/averHalf_Expmult_p_1.5_hmax_0.01767766952966378_dtfine_0.00078125_gScrop.txt};
	
	\addplot table [x=dt,y=LinftyL2clas] {Data/p=1_5 unknown/Classic/h=sqrt(tau)/averClassic_Expmult_p_1.5_hmax_0.01767766952966378_dtfine_0.00078125_gScrop.txt};
	\addplot table [x=dt,y=LinftyL2clas] {Data/p=1_5 unknown/Full/h=sqrt(tau)/averFull_Expmult_p_1.5_hmax_0.01767766952966378_dtfine_0.00078125_gScrop.txt};
	\addplot table [x=dt,y=LinftyL2clas] {Data/p=1_5 unknown/Half/h=sqrt(tau)/averHalf_Expmult_p_1.5_hmax_0.01767766952966378_dtfine_0.00078125_gScrop.txt};
	
\addplot[dashed,sharp plot,update limits=false] coordinates {(1e-1,5e-2) (1e-3,5e-6)};

	\legend{
	,,,,,,
	{$2$},
	};
\end{axis}
\end{tikzpicture}
\begin{tikzpicture}
\begin{axis}[
clip=false,
width=.5\textwidth,
height=.45\textwidth,
xmode = log,
ymode = log,
cycle multi list={\nextlist MyColors},
xlabel={$\tau \approx h^2$},
scale = {1},
clip = true,
legend cell align=left,
legend style={legend columns=1,legend pos= south east,font=\fontsize{7}{5}\selectfont}
]
		\addplot table [x=dt,y=L2V1] {Data/p=1_5 unknown/Classic/h=sqrt(tau)/averClassic_Expmult_p_1.5_hmax_0.01767766952966378_dtfine_0.00078125_gScrop.txt};
	\addplot table [x=dt,y=L2V1] {Data/p=1_5 unknown/Full/h=sqrt(tau)/averFull_Expmult_p_1.5_hmax_0.01767766952966378_dtfine_0.00078125_gScrop.txt};
	\addplot table [x=dt,y=L2V1] {Data/p=1_5 unknown/Half/h=sqrt(tau)/averHalf_Expmult_p_1.5_hmax_0.01767766952966378_dtfine_0.00078125_gScrop.txt};
	
	\addplot table [x=dt,y=L2V2] {Data/p=1_5 unknown/Classic/h=sqrt(tau)/averClassic_Expmult_p_1.5_hmax_0.01767766952966378_dtfine_0.00078125_gScrop.txt};
	\addplot table [x=dt,y=L2V2] {Data/p=1_5 unknown/Full/h=sqrt(tau)/averFull_Expmult_p_1.5_hmax_0.01767766952966378_dtfine_0.00078125_gScrop.txt};
	\addplot table [x=dt,y=L2V2] {Data/p=1_5 unknown/Half/h=sqrt(tau)/averHalf_Expmult_p_1.5_hmax_0.01767766952966378_dtfine_0.00078125_gScrop.txt};
	
		\addplot table [x=dt,y=L2V3] {Data/p=1_5 unknown/Classic/h=sqrt(tau)/averClassic_Expmult_p_1.5_hmax_0.01767766952966378_dtfine_0.00078125_gScrop.txt};
	\addplot table [x=dt,y=L2V3] {Data/p=1_5 unknown/Full/h=sqrt(tau)/averFull_Expmult_p_1.5_hmax_0.01767766952966378_dtfine_0.00078125_gScrop.txt};
	\addplot table [x=dt,y=L2V3] {Data/p=1_5 unknown/Half/h=sqrt(tau)/averHalf_Expmult_p_1.5_hmax_0.01767766952966378_dtfine_0.00078125_gScrop.txt};
	
\addplot[dashed,sharp plot,update limits=false] coordinates {(1e-1,8e-2) (1e-3,8e-6)};

	\legend{
	,,,,,,,,,
	{$2$},
	};
\end{axis}
\end{tikzpicture}
\caption{Convergence for $p=1.5$ of $\bfv_c^{\text{EM}}$~(red), $\bfv_c^{\text{Half}}$~(blue) and $\bfv_c^{\text{Full}}$~(green) towards $\bfv_f$ measured in $d_{L^\infty L^2}^{\,\text{aver}}$~(solid, left), $d_{L^\infty L^2}^{\,\text{point}}$~(dashed, left), $d_{L^2 V}^{\,\text{classic}}$~(solid, right), $d_{L^2 V}^{\,\text{inner}}$~(dashed, right) and $d_{L^2 V}^{\,\text{outer}}$~(dash dotted, right). In the top row we use $\tau \approx h$ and in the bottom row $\tau \approx h^2$.}
\label{fig:Unknown-p=1_5}
\end{center}
\end{figure}

\subsection{Conclusion}
Experimentally, there is no evidence for the divergence of the classical Euler-Maruyama scheme~\eqref{eq:algo-EM}. Still convergence might fail. Our algorithms~\eqref{eq:algo} and~\eqref{algo:2nd} are safe and simple substitutions. They achieve optimal linear convergence in space and optimal $1/2$-convergence in time with minimal regularity assumptions. Additionally, the algorithms~\eqref{eq:algo},~\eqref{algo:2nd} and~\eqref{eq:algo-EM} share the same computational complexity, since solving the nonlinear system in each time step is the most expensive operation. The construction of the random inputs is fast and can be done with the simple sampling algorithm~\eqref{algo:Sampling}.

\printbibliography 


\end{document}